\title{A MUSCL-like finite volume approximation of the momentum convection operator for low-order nonconforming face-centred discretizations}
\date{January 2023}
\begin{document}

\author{A. Brunel}
\address{Aix-Marseille Universit\'e, Centre de Math\'ematiques et Informatique,  39 rue Joliot-Curie, 13453 Marseille Cedex 13, France (aubin.brunel@univ-amu.fr)}
\author{R. Herbin}
\address{Aix-Marseille Universit\'e, Centre de Math\'ematiques et Informatique, 39 rue Joliot-Curie, 13453 Marseille Cedex 13, France (raphaele.herbin@univ-amu.fr)}
\author{J.-C. Latch\'e}
\address{IRSN, BP 13115, St-Paul-lez-Durance Cedex, France (jean-claude.latche@irsn.fr)}

\begin{abstract}
    We propose in this paper a discretization of the momentum convection operator for fluid flow simulations on quadrangular or hexahedral meshes.
    The space discretization is performed by the low-order nonconforming Rannacher-Turek finite element: the scalar unknowns are associated to the cells of the mesh, while the velocities unknowns are associated to the edges or faces.
    The momentum convection operator is of finite volume type, and its almost second order expression is derived by a MUSCL-like technique.
    The latter is of algebraic type, in the sense that the limitation procedure does not invoke any slope reconstruction, and is independent from the geometry of the cells.
    The derived discrete convection operator applies both to constant or variable density flows, and may thus be implemented in a scheme for incompressible or compressible flows.
    To achieve this goal, we derive a discrete analogue of the computation $u_i\,(\partial_t(\rho u_i) + \dive(\rho u_i \bfu)= \frac 1 2 \partial_t(\rho u_i^2) + \frac 1 2 \dive(\rho u_i^2 \bfu)$ (with $\bfu$ the velocity, $u_i$ one of its component, $\rho$ the density, and assuming that the mass balance holds) and discuss two applications of this result: 
    firstly, we obtain stability results for a semi-implicit in time scheme for incompressible and barotropic compressible flows;
    secondly, we build a  consistent, semi-implicit in time scheme that is based on the discretization of the internal energy balance rather than the total energy.
    The performance of the proposed discrete convection operator is assessed by numerical tests on the incompressible Navier-Stokes equations, the barotropic and the full compressible Navier-Stokes and the compressible Euler equations.
\end{abstract}

\keywords{Fluid flows, convection operator, staggered meshes, MUSCL, kinetic energy balance, stability, incompressible flows, compressible flows.}
\subjclass{65M08 \and 65M12 \and 76M12}

\maketitle
%
%
\section{Introduction}

When designing numerical schemes for fluid flow simulations, combining a finite element approximation of diffusion terms with a finite volume discretization of the convection operator is an appealing solution, sometimes found in the literature.
Indeed, the diffusion term may be easily discretized using the finite element method with minimal mesh restrictions while preserving the stability, \ie\ the control of a (possibly discrete) $H^1$-norm, but the discretization of the convection term is less straightforward, since standard finite element methods may yield numerical instabilities, especially in the convection dominated case.
Tackling this problem amounts to introduce some upwinding in the scheme, and, to this purpose, many solutions have been explored in the context of the finite volume; finite-volume convection operators respecting both some monotonicity and $L^2$-stability properties (including, for the latter item, a local discrete entropy or, in the world of fluid flow, a kinetic energy balance) have been obtained in this way.
Several authors have thus proposed discretizations combining finite elements and finite volumes, to take benefit of the best of both worlds, see for instance \cite{ohm-84-tec, ang-91-num,fei-95-com, fei-97-con, eym-06-com, cal-19-com} and references therein.
These works may address convection-diffusion or Navier-Stokes equations, using preferably finite elements approximations of accuracy compatible with finite volumes, \ie\ low-order elements.
For the incompressible Navier-Stokes equations or for low-Mach compressible flows, associating this property with the \emph{inf-sup} stability requirement suggests turning to low-order nonconforming elements, namely the low-order Crouzeix-Raviart element for simplicial meshes \cite{cro-73-con} or the Rannacher-Turek element for quadrangles and hexahedra \cite{ran-92-sim}.
An application of this strategy for the discretization of the stationary incompressible Navier-Stokes equations by Crouzeix-Raviart finite elements may be found in \cite{sch-96-opt}; extension to quasi-incompressible unsteady flows, both with the Crouzeix-Raviart and Rannacher-Turek finite elements, is performed in \cite{ans-11-sta}.

\medskip
In most of the above cited papers, only a first-order upwinding technique is considered, leading to diffusive approximations.
Increasing the order of the scheme and its precision while preserving its stability can be tricky, since naive higher-order methods might lead to spurious oscillations.
As already mentioned, successful methods exist to achieve this goal; such a now well-known method is Van Leer's so-called MUSCL scheme \cite{van-79-tow}.
This technique was firstly used for hyperbolic conservation laws in one space dimension; extending it to multi-dimensional problems on general meshes is a challenging task, due to the so-called slope construction involved in the limitation step, see for instance \cite{cal-10-sta, buf-10-mon, cla-10-sta, let-15-mul}.
A numerical scheme circumventing this problem for the transport operator is proposed in \cite{pia-13-for}; it relies on the observation that the requirements for the scheme to satisfy the maximum principle may be substituted to the usual limitation technique, yielding a limitation step of purely algebraic type, and so free of any geometric consideration.

\medskip
The continuous momentum convection operator that we consider here takes the following generic form:
\begin{equation} \label{eq:cont_conv}
\mathcal C(\rho,u_i)=\partial_t(\rho u_i)+\dive(\rho u_i \bfu)
\end{equation}
where $\rho$ is the density of the fluid and $\bfu$ its velocity (so, for $1 \leq i \leq d$, $u_i$ stands for the $i$-th component of the velocity).
It may be recast under the form of a transport operator provided that a mass balance equation holds, that is 
\begin{equation} \label{eq:cont_mass_balance}
\partial_t \rho + \dive(\rho \bfu)=0.
\end{equation}
Indeed, we have:
\begin{equation}
\partial_t(\rho u_i)+\dive(\rho u_i \bfu)=\underbrace{u_i\big(\partial_t \rho + \dive(\rho \bfu)\big)}_{=0}+\rho \bigl(\partial_t u_i+\bfu \cdot \gradi u_i\bigr).
\end{equation}
This formulation shows that the operator $\mathcal C$ satisfies a discrete maximum principle.
In addition, a standard manipulation of partial derivatives yields:
\begin{equation}\label{eq:kec}
u_i\ \mathcal C(\rho,u_i) = \frac 1 2\,\rho \bigl(\partial_t u_i^2+\bfu \cdot \gradi u_i^2\bigr) = \partial_t(\rho \frac{u_i^2} 2)+\dive(\rho \frac{u_i^2} 2 \bfu).
\end{equation}
A finite volume discretization of the operator $\mathcal{C}$ based on the previously cited algebraic MUSCL method \cite{pia-13-for} was recently derived, first for simplicial or quadrangular (or hexahedral) meshes \cite{gas-18-mus}, and then on more general possibly hybrid meshes \cite{bru-22-sta}.
Here we recall this construction for a space discretization using the unknowns of the Rannacher-Turek finite element (Section \ref{sec:op}) and derive a discrete analogue of Equation \eqref{eq:kec} satisfied by this discrete convection operator (Section \ref{sec:kin}).
The form of $\mathcal C$ is quite general, and the operator built here may be applied as well to incompressible as to compressible flows.
Two results support this issue.
First, for an advection diffusion with an implicit-in-time discretization of the diffusion term (while the MUSCL approximation of the convection term is explicit), integrating the discrete counterpart of \eqref{eq:kec} in space yields a stability estimate, valid for time steps lower than a limit depending on the diffusion coefficient and the mesh regularity, but independent of the space step (Section \ref{sec:stab}); this estimate is the essential argument that is required to control the kinetic energy for incompressible flows or the total energy for barotropic flows.
Second, we show how to build, once again from the discrete version of \eqref{eq:kec}, a consistent scheme for the Euler equations based on the solution of the internal energy balance to preserve the positivity of the latter variable (Section \ref{sec:corr_Euler}).
To this aim, having at hand a local (\ie\ written on each cell and not integrated over the space domain) kinetic energy balance is necessary.
Finally, numerical experiments are performed (Section \ref{sec:num}) to assess the stability, consistency, and accuracy of the proposed scheme for the incompressible and compressible Navier-Stokes equations. 
%
%
\section{Space and time discretizations} \label{sec:mesh}

We first define a primal mesh $\mesh$ by splitting $\Omega$ into a finite family of disjoint quadrangles (if $d=2$) or hexahedra (if $d=3$) denoted by $K$ and called control volumes or cells.
We then denote by $\edges$ the set of faces of the mesh $\mesh$; for $K \in \mesh$, $\edges(K)$ stands for the set of faces of $K$ and we thus have $\partial K = \displaystyle {\cup_{\edge \in \edges(K)} \overline \edge}$. 
Any face $\edge \in \edges$ is either a part of the boundary of $\Omega$, \ie\ $\edge\subset \partial\Omega$, in which case $\edge$ is said to be an external face, or there exists $(K,L)\in \mesh^2$ with $K \neq L$ such that $\overline K \cap \overline L  = \overline \edge$: we denote in this case $\edge = K|L$ and $\edge$ is said to be an internal face.
We denote by $\edgesext$ and $\edgesint$ the set of external and internal faces. 
For $K \in \mesh$ and $\edge \in \edges$, we denote by $|K|$ the measure of $K$ and by $|\edge|$ the $(d-1)$-measure of the face $\edge$. 

\medskip
The discretization is staggered in the sense that the scalar and vector unknowns are not colocated:
\blist
\item the unknowns associated to the density, and to any other scalar variable involved in the problem, as for instance the pressure, are associated with the cells of the primal mesh $\mesh$; limiting the list of set of scalar fields to the density, the pressure $p$ and the internal energy $e$ (which will be sufficient for the numerical applications presented in Section \ref{sec:num}), the corresponding unknowns are denoted by $(\rho_K)_{K\in\mesh}$, $(p_K)_{K\in\mesh}$ and $(e_K)_{K\in\mesh}$;
\item  the degrees of freedom for the velocity are defined on a dual mesh using the Rannacher-Turek non-conforming low-order finite element approximation \cite{ran-92-sim} and are denoted $(\bfu_\edge)_{\edge \in \edges}$ with $\bfu_\edge= (u_{\edge,1},\ldots,u_{\edge,d})$; they are identified with the mean value of the velocity component over the face.
\elist
The dual mesh is constructed as follows (see Figure \ref{fig:mesh}): if $K \in \mesh$ is a rectangle or a rectangular cuboid, we denote by $x_K$ the mass center of $K$ and we construct $D_{K,\edge}$ as the cone with basis $\edge$ and with vertex $x_K$; this definition is extended to a general cell $K$, by supposing that $K$ is split in the same number of sub-cells (the geometry of which does not need to be specified) and with the same connectivity. 
We now define $D_\edge$, the dual cell associated to $\edge$, as $D_\edge=D_{K,\edge} \cup D_{L,\edge}$ if $\edge=K|L \in \edgesint$ and $D_\edge=D_{K,\edge}$ if $\edge\in\edges(K)\cap\edgesext$; its measure is denoted by $|D_\edge|$.
We then denote by $\edgesd(D_\edge)$ the set of dual faces of $D_\edge$, and by $\edged=D_\edge|D_{\edge'}$ the face separating two dual cells $D_\edge$ and $D_{\edge'}$. 

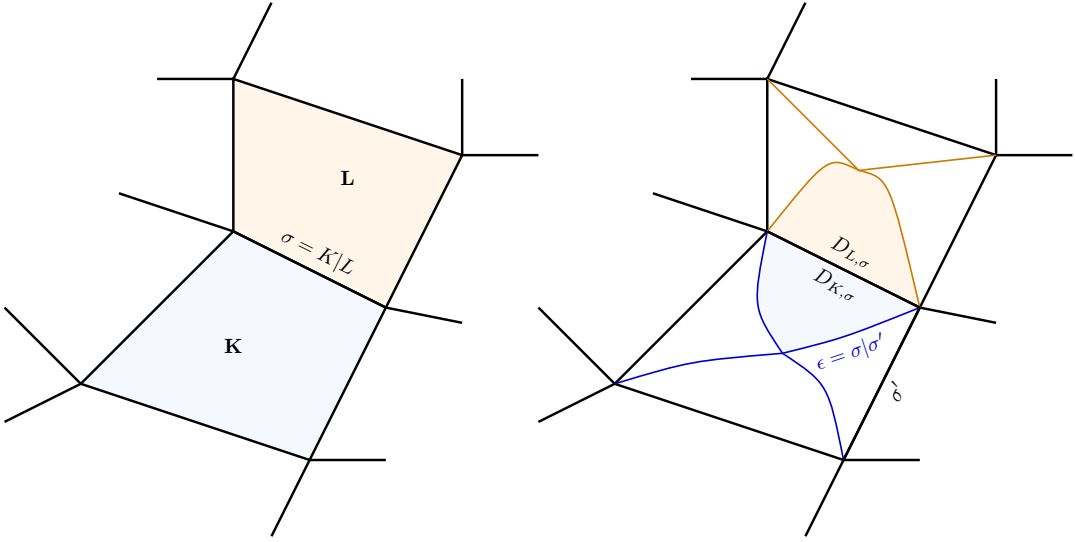
\begin{figure}
\begin{center}
\scalebox{0.78}{
\begin{tikzpicture}[scale=1.3]
\fill[color=bclair,opacity=0.3] (1.,2.) -- (4.,1.) -- (5.,3.) -- (3.,4.) -- (1.,2.);
\fill[color=orangec!30!white,opacity=0.3] (5.,3.) -- (6.,5.) -- (3.,6.) -- (3.,4.);
\draw[very thick, color=black] (1.,2.) -- (4.,1.) -- (5.,3.) -- (3.,4.) -- (1.,2.);
\draw[very thick, color=black] (5.,3.) -- (6.,5.) -- (3.,6.) -- (3.,4.);
\draw[very thick, color=black] (1.,2.) -- (0.,1.5); \draw[very thick, color=black] (1.,2.) -- (0.,3);
\draw[very thick, color=black] (4.,1.) -- (3.5,0.); \draw[very thick, color=black] (4.,1.) -- (5.,1.);
\draw[very thick, color=black] (5.,3.) -- (6.,2.8);
\draw[very thick, color=black] (6.,5.) -- (7.,5.);  \draw[very thick, color=black] (6.,5.) -- (6.,6.);
\draw[very thick, color=black] (3.,6.) -- (3.5,7.); \draw[very thick, color=black] (3.,6.) -- (2.,6.);
\draw[very thick, color=black] (3.,4.) -- (1.5,4.5);
\node at (3., 2.5){$\mathbf K$}; \node at (4.5, 4.7){$\mathbf L$};
\draw[very thick, color=black] (5.,3.) -- (3.,4.) node[midway,sloped,above]{$\edge=K|L$};
\fill[color=bclair,opacity=0.3] (10.,4.) .. controls (9.8,3.) .. (10.2,2.4) .. controls (11.,2.6) .. (12.,3.);
\fill[color=orangec!30!white,opacity=0.3] (10.,4.) .. controls (10.8,5.) .. (11.2,4.8) .. controls (11.6,4.7) .. (12.,3.);
\draw[very thick, color=black] (8.,2.) -- (11.,1.) -- (12.,3.) -- (10.,4.) -- (8.,2.);
\draw[very thick, color=black] (12.,3.) -- (13.,5.) -- (10.,6.) -- (10.,4.);
\draw[very thick, color=black] (8.,2.) -- (7.,1.5); \draw[very thick, color=black] (8.,2.) -- (7.,3);
\draw[very thick, color=black] (11.,1.) -- (10.5,0.); \draw[very thick, color=black] (11.,1.) -- (12.,1.);
\draw[very thick, color=black] (12.,3.) -- (13.,2.8);
\draw[very thick, color=black] (13.,5.) -- (14.,5.);  \draw[very thick, color=black] (13.,5.) -- (13.,6.);
\draw[very thick, color=black] (10.,6.) -- (10.5,7.); \draw[very thick, color=black] (10.,6.) -- (9.,6.);
\draw[very thick, color=black] (10.,4.) -- (8.5,4.5);
\draw[thick, color=bleuf] (10.,4.) .. controls (9.8,3.) .. (10.2,2.4);
\draw[thick, color=bleuf] (10.2,2.4) .. controls (11.,2.6) .. (12.,3.) node[midway,sloped,below]{$\edged=\edge|\edge'$};
\draw[thick, color=bleuf] (8.,2.) .. controls (9.,2.3) .. (10.2,2.4);
\draw[thick, color=bleuf] (11.,1.) .. controls (10.8,2.) .. (10.2,2.4);
\draw[thick, color=orangec!80!black] (10.,4.) .. controls (10.8,5.) .. (11.2,4.8) .. controls (11.6,4.7) .. (12.,3.);
\draw[thick, color=orangec!80!black] (13.,5.) -- (11.2,4.8);
\draw[thick, color=orangec!80!black] (10.,6.) -- (11.2,4.8);
\draw[very thick, color=black] (12.,3.) -- (10.,4.) node[midway,sloped,above]{$D_{L,\edge}$} node[midway,sloped,below]{$D_{K,\edge}$};
\draw[very thick, color=black] (11.,1.) -- (12.,3.) node[midway,sloped,below]{$\edge'$};
\end{tikzpicture}}
\caption{Primal and dual meshes for the Rannacher-Turek elements.}
\label{fig:mesh}
\end{center}
\end{figure}

\medskip
Finally, for the sake of simplicity, a constant time step denoted by $\delta t$ is used for the time discretization, with $\delta t=T/N$.
We define $t_n= n\, \delta t,\ 1\leq n \leq N$, and the notations for the discrete unknowns at step $n$ are obtained from the notations for space discretization introduced above by adding an index $n$, so, finally, the unknowns involved in the definition of the convection operator are $(\rho^n_K)_{K\in\mesh,\ 0\leq n \leq N}$ and $(\bfu_\edge^n)_{\edge \in \edge,\ 0\leq n \leq N }$.

%
%
\section{A second order convection operator}\label{sec:op}

Let us first address the discretization of the mass balance equation \eqref{eq:cont_mass_balance}. 
Since, in the Rannacher-Turek element, the pressure is piecewise constant over the cells, the natural mass balance (or, at least, for incompressible flows, the natural divergence-free constraint) takes a finite volume like formulation, posed over the primal cells.
With an explicit-in-time discretization of the convection flux, this equation thus reads, for $K \in \mesh$:
\[
\frac{|K|}{\delta t}(\rho_K^{n+1}-\rho_K^n)+ |K|\ \dive(\rho\bfu)_K^n=0,\quad \dive(\rho\bfu)_K^n =\frac{1}{|K|}\sum_{\edge \in \edges(K)} F^n_{K,\edge},
\]
where $F^n_{K,\edge}$ stands for the (primal) numerical mass flux across $\edge$ outward $K$ and is defined by:
\[
\forall \edge = K|L \in \edgesint, \quad F^n_{K,\edge}=|\edge|\ \rho_\edge^n \bfu_\edge^n \cdot \bfn_{K,\edge},
\]
with $\bfn_{K,\edge}$ the normal vector to the face $\edge$ outward $K$ and $\rho_\edge^n$ a discretization of the density at the face, which does not need to be specified in this section.
We suppose that the cell densities are positive at all time steps.
When the density is constant, we recover the usual divergence-free constraint for the Rannacher-Turek element.

\medskip
The dual mass fluxes and the face densities are constructed to ensure that a similar discrete mass balance holds over the dual cells, \ie\ to obtain a relation of the form:
\begin{equation} \label{eq:dual_mass_b}
    \forall \edge \in \edges,\qquad \frac{|D_\edge|}{\delta t}(\rho_{D_\edge}^{n+1}-\rho_{D_\edge}^n)+\sum_{\edged \in \edgesd(D_\edge)} F_{\edge,\edged}^n = 0,
\end{equation}
where $\rho^n_{D_\edge}$ is the density at the face $\edge$ and at time step $t_n$, and $F_{\edge,\edged}^n$ a mass flux through $\edged$ outward $D_\edge$.
For the internal faces, the face densities $\rho_{D_\edge}$ are defined as a weighted average of the density unknowns in the cells adjacent to $\edge$:
\begin{equation}
\label{eqdef:rho-Dsigma}
 \forall \edge \in \edgesint,\ \edge=K|L,\qquad |D_\edge|\ \rho_{D_\edge} = |D_{K,\edge}|\ \rho_K + |D_{L,\edge}|\ \rho_L. 
\end{equation}
For an external face $\edge$ of adjacent cell $K$, we just set $\rho_{D_\edge} = \rho_K$.
For $\edged$ included in the primal cell $K$ and $\edge$ a face of $K$, the mass fluxes $F_{\edge,\edged}$ are obtained by a linear combination of the mass fluxes through the primal faces of $K$.
A detailed explanation of the construction process is given in \cite{ans-11-sta} and extended in \cite{bru-22-sta} to more general, possibly hybrid 3D meshes.

\medskip
The mass balance \eqref{eq:dual_mass_b} over the dual meshes is then used for the definition of the discrete momentum convection term $C(\rho,u)_{\edge,i}^{n+1}$, \ie\ the discretization of the continuous term $\mathcal C(\rho,u_i)=\partial_t(\rho u_i) + \dive(\rho u_i \bfu)$. 
For $1 \leq i \leq d$ and $\edge \in \edges$, this discrete term takes the following form:
\begin{equation}\label{eqdef:discrete-convection}
    C(\rho,u)_{\edge,i}^{n+1} = \frac{1}{\delta t}(\rho_{D_\edge}^{n+1} u_{\edge,i}^{n+1}-\rho_{D_\edge}^n u_{\edge,i}^n) + \dive(\rho u_i \bfu)_\edge^n, \mbox{ with } \dive(\rho u_i \bfu)_\edge^n = \frac{1}{|D_\edge|} \sum_{\edged \in \edgesd(D_\edge)} F_{\edge,\edged}^n  u_{\edged,i}^n,
\end{equation}
where $u_{\edged,i}^n$ is an approximation of $u_i$ over the face $\edged$ at the time $t_n$.
For a boundary face $\edge \in \edgesext$, one of the dual faces of $D_\edge$ is the face $\edge$ itself.
If this primal/dual face is included in a part of the boundary where the velocity is prescribed, no equation is written for $\bfu_\edge^{n+1}$ (it is just set to the prescribed value) and no definition is needed for $u_{\edged,i}^n$; in the other case (\ie\ for a Neumann boundary condition), we suppose that the flow leaves the computational domain, and we set $u_{\edged,i}^n$ to the upwind value, \ie\ $u_{\edged,i}^n = u_{\edge,i}^n$.
For an internal dual face, $u_{\edged,i}^n$ is obtained by the algebraic MUSCL-like technique introduced in \cite{pia-13-for}, which implements the following procedure.
Let us recast the convection term $C(\rho,u)_{\edge,i}^{n+1}$ as
\begin{align*}
    C(\rho,u)_{\edge,i}^{n+1} = \frac{1}{\delta t} \rho_{D_\edge}^{n+1}\ \bigl(u_{\edge,i}^{n+1} - \bar u_{\edge,i}^{n+1}),
\end{align*}
with
\begin{align*}
    \bar u_{\edge,i}^{n+1} = \frac 1 {\rho_{D_\edge}^{n+1}}\ \bigl( \rho^n_{D_\edge} u^n_{\edge,i} - \delta t\ \dive(\rho u_i \bfu)_\edge^n \bigr)
= \frac 1 {\rho_{D_\edge}^{n+1}}\ \bigl( \rho^n_{D_\edge} u^n_{\edge,i} - \frac{\delta t}{|D_\edge|} \sum_{\edged \in \edgesd(D_\edge)} F_{\edge,\edged}^n  u_{\edged,i}^n \bigr).
\end{align*}
The discrete convection operator is said to be monotone if the term $\bar u_{\edge,i}^{n+1}$ can be written as a convex combination of degrees of freedom of $u^n_i$; for instance, such a property would ensure a discrete maximum principle for the transport equation, or a convection-diffusion equation with a suitable (only available on specific meshes) discretization of the diffusion term.
Let us recast $\bar u_{\edge,i}^{n+1}$ as
\begin{equation}\label{eq:def_ubar}
\bar u_{\edge,i}^{n+1} = \frac 1 {\rho_{D_\edge}^{n+1}}\ \Bigl[ \bigl( \rho^n_{D_\edge}- \frac{\delta t}{|D_\edge|} \sum_{\edged \in \edgesd(D_\edge)} F_{\edge,\edged}^n \bigr)\ u^n_{\edge,i}
- \frac{\delta t}{|D_\edge|} \sum_{\edged \in \edgesd(D_\edge)} F_{\edge,\edged}^n  (u_{\edged,i}^n-u^n_{\edge,i}) \Bigr].
\end{equation}
The mass balance equation \eqref{eq:dual_mass_b} yields
\[
\frac 1 {\rho_{D_\edge}^{n+1}}\ \bigl( \rho^n_{D_\edge}- \frac{\delta t}{|D_\edge|} \sum_{\edged \in \edgesd(D_\edge)} F_{\edge,\edged}^n) = 1,
\]
and therefore the sum of the coefficients multiplying the velocities $u^n_{\edge,i}$ and $u_{\edged,i}^n$ at the right-hand side of Relation \eqref{eq:def_ubar} is equal to $1$.
The coefficient of $u^n_{\edge,i}$ in \eqref{eq:def_ubar} is non-negative under the $\cfl$ condition
\begin{equation}
\cfl=\max_{\edge \in \edges} \Bigl\{ \frac{\delta t}{\rho_{D_\edge}^n\ |D_\edge|}\sum_{\edged \in \edgesd(D_\edge)}|F_{\edge,\edged}^n| \Bigr\}  \leq 1.
\end{equation}
and we indeed obtain a convex combination at the right-hand side of Equation \eqref{eq:def_ubar} if the following condition holds for each $\edged \in \edgesdint$ such as $\edged=D_\edge|D_{\edge'}$:
\begin{equation} \label{eq:MUSCLfacevalue}
\exists \alpha_\edged^\edge \in [0,1],\ \exists \Tilde{\edge} \in \edges \text{ such that} \quad
u_{\edged,i}^n-u_{\edge,i}^n = \left| \begin{array}{ll}
\alpha_\edged^\edge(u_{\edge,i}^n - u_{\Tilde{\edge},i}^n) & \text{if } F_{\edge,\edged}\geq0, \\[2ex]
\alpha_\edged^\edge(u_{\Tilde{\edge},i}^n -u_{\edge,i}^n)  & \text{otherwise}.
\end{array}\right.
\end{equation}
Of course, in this relation, both the coefficient $\alpha_\edged^\edge$ and the face $\Tilde{\edge}$ have to be determined at each time step.
We now deduce from the relation \eqref{eq:MUSCLfacevalue} a constructive process to compute the quantities $u_{\edged,i}^n$. 
Let $\edged$ be a given internal face, and let $D_{\edge^-}$ (resp. $D_{\edge^+}$) denote the adjacent upwind (resp. downwind) dual cell to the face $\edged$ (\ie\ $F_{\edge^-,\edged}\geq0$).
Let  $\neigh_\edged(D_{\edge^-})$  (resp. $\neigh_\edged(D_{\edge^+})$ be a set of neighbouring dual cells of $D_{\edge^-}$ (resp. $D_{\edge^+}$).
The following assumptions are then a transcription of Condition \eqref{eq:MUSCLfacevalue}: 
\begin{subequations}\label{interval}
\begin{align} & \label{H1}
\exists \ D_{\overline{\edge}} \in \neigh_\edged(D_{\edge^+}) \mbox{ such that } u_{\edged,i}^n \in I^+=\bigl[ u_{\overline{\edge},i}^n, u_{\overline{\edge},i}^n + \dfrac{\xi^+}{2}(u_{\edge^+,i}^n - u_{\overline{\edge},i}^n) \bigr],
\\ & \label{H2}
\exists \ D_{\overline{\edge}} \in \neigh_\edged(D_{\edge^-}) \mbox{ such that } u_{\edged,i}^n \in I^-=\bigl[ u_{\edge^-,i}^n, u_{\edge^-,i}^n + \dfrac{\xi^-}{2}(u_{\edge^-,i}^n - u_{\overline{\edge},i}^n) \bigr],
\end{align}
\end{subequations}
where $\xi^+$ and $\xi^-$ are two numerical parameters lying in the interval $[0,2]$.
These parameters have to be chosen by the user, and are usually kept constant through the whole computation; decreasing their value makes the algorithm limitation more restrictive.
The set $\neigh_\edged(D_{\edge^+})$ is always required to contain $D_{\edge^-}$, with the following two consequences: first, the value $u_{\edge^-,i}^n$ always belongs to both intervals $I^+$ and $I^-$, so their intersection is not void and the scheme is always defined; second, setting $\xi^+=\xi^-=0$ yields the usual upwind scheme.
To make the definition of the scheme complete, we now need to define the sets $\neigh_\edged(D_{\edge^+})$ and $\neigh_\edged(D_{\edge^-})$.
Here we choose $\neigh_\edged(D_{\edge^+})=\{D_{\edge^-}\}$, so that the condition \eqref{H1} implies that $u_{\edged,i}^n$ is a convex combination of $u_{\edge^-,i}^n$ and $u_{\edge^+,i}^n$.
Furthermore, if $\xi^+ \leq 1$, the hypothesis \eqref{H1} yields $u_{\edged,i,\mathrm M} \in [u_{\edged,i,\mathrm U},u_{\edged,i,\mathrm C}]$ where $u_{\edged,i,\mathrm U}$, $u_{\edged,i,\mathrm M}$ and $u_{\edged,i,\mathrm C}$ are the values given by the upwind, MUSCL and centered discretization respectively; the MUSCL discretization thus yields in this case a more diffusive scheme than the centered discretization and less diffusive than the upwind discretization, whatever the choice of $\xi+$ and $\xi^-$ in the $[0,2]$ interval.
Hence, in our numerical experiments, we choose to set  $\xi^+ \leq 1$, for energetic stability reasons; note also that the motivation for considering $\xi^+>1$ is generally to allow a second order interpolation of the unknown at the face, which here does not make sense since the dual mesh cannot be built explicitly.
Concerning $\neigh_\edged(D_{\edge^-})$, several choices are possible: 
\blist
\item a simple choice is to take the neighbouring cells of $D_{\edge^-}$:
\[
\neigh_\edged(D_{\edge^-})=\left\{(D_\edge)_{\edge \in \edges} \text{ such that }D_\edge\text{ shares a face } \tilde{\edged} \text{ with }D_{\edge^-}\right\};
\]
\item the previous set can be restricted to the upstream neighbouring cells of $D_{\edge^-}$:
\[
\neigh_\edged(D_{\edge^-})=\left\{(D_{\edge})_{\edge \in \edges} \text{ such that }D_\edge\text{ shares a face } \tilde{\edged} \text{ with }D_{\edge^-} \text{ and } F_{\edge,\tilde{\edged}}\geq0\right\};
\]
\item another possibility is to take the opposite cell to $D_{\edge^+}$ with respect to $D_{\edge^-}$, \ie 
\[
\neigh_\edged(D_{\edge^-})=\left\{(D_{\edge'})_{\edge' \in \edges} \text{ such that }D_{\edge'} \text{ shares a face } \edged' \text{ with }D_{\edge^-} \text{ and } \edged \cap \edged'=\varnothing \right\}.
\]
\elist
The last choice was selected in our numerical experiments, in the interior of the computational domain.
For dual edges with one of the adjacent cells itself adjacent to the boundary, depending on the sign of the mass fluxes, this choice may be impossible if the opposite cell does not exist; for a smooth flow, in such a case, one may expect that the fluid is entering the domain through the opposite dual face (the face denoted by $\edged'$ in the previous relation), and the value in the opposite cell may be replaced by the Dirichlet value.
Otherwise, the choice for $u_{\edge^-,i}^n$ boils down to the upwind choice.

\begin{figure}
\begin{center}
\scalebox{0.78}{
\begin{tikzpicture}[scale=1.3]
\fill[color=bclair,opacity=0.7] (3.,4.) -- (4.5,4.8) -- (5.,3.) -- (3.2,2.2) -- (3.,4.);
\fill[color=orangec!30!white,opacity=0.7] (5.,3.) -- (3.2,2.2) -- (4.,1.) -- (5.8,1.5) -- (5.,3.);
\fill[color=vertf!30!white,opacity=0.7] (4.,1.) -- (5.8,1.5) -- (6.5,0.) -- (5.,-0.8) -- (4.,1.);
\draw[very thick, color=black] (1.,2.) -- (4.,1.) -- (5.,3.) -- (3.,4.) -- (1.,2.);
\draw[very thick, color=black] (5.,3.) -- (6.,5.) -- (3.,6.) -- (3.,4.);
\draw[very thick, color=black] (4.,1.) -- (6.5,0) -- (7,2.5) -- (5,3);
\draw[very thick, color=black] (4.,1.) -- (6.5,0) -- (6.5,-2) -- (3.5,-1.2) -- (4.,1.);
\draw[very thick, color=black] (1.,2.) -- (0.,1.5); \draw[very thick, color=black] (1.,2.) -- (0.,3);
\draw[very thick, color=black] (6.,5.) -- (7.,5.);  \draw[very thick, color=black] (6.,5.) -- (6.,6.);
\draw[very thick, color=black] (3.,6.) -- (3.5,7.); \draw[very thick, color=black] (3.,6.) -- (2.,6.);
\draw[very thick, color=black] (7.,2.5) -- (7.5,2.4); \draw[very thick, color=black] (7.,2.5) -- (7.1,3.);
\draw[very thick, color=black] (6.5,-2.) -- (7.,-2.); \draw[very thick, color=black] (6.5,-2.) -- (6.5,-2.5);
\draw[very thick, color=black] (3.5,-1.2) -- (3.5,-1.7); \draw[very thick, color=black] (3.5,-1.2) -- (3.,-1.2);
\draw[very thick, color=black] (3.,4.) -- (1.5,4.5);
\draw[very thick, color=black] (6.5,0.) -- (7.,-0.05);
\draw[very thick, color=black] (5.,3.) -- (3.,4.) node[midway,sloped,above]{$\edge$};
\draw[very thick, color=black] (5.,3.) -- (4.,1.) node[midway,sloped,below]{$\edge'$};
\draw[very thick, color=black] (4.,1.) -- (6.5,0.) node[midway,sloped,above]{$\edge''$};
\draw[very thick, color=vertf!60!black] (3.2,2.2) -- (5.,3.) node[near start,sloped,below]{$\edged$};
\draw[very thick, ->, color=vertf!60!black] (4.3,2.15) -- (3.82,3.23) node[near end,sloped,above]{$F_{\edge',\edged}$};
\node at (3.4, 3.4){\textcolor{bleuf}{$\mathbf D_\edge$}};
\node at (4, 1.6){\textcolor{orangec!80!black}{$\mathbf D_{\edge'}$}};
\node at (5.3, 0){\textcolor{vertf!99!black}{$\mathbf D_{\edge''}$}};
\end{tikzpicture}}
\caption{Dual cells involved in the definition of the convection flux.}
\label{fig:flux}
\end{center}
\end{figure}
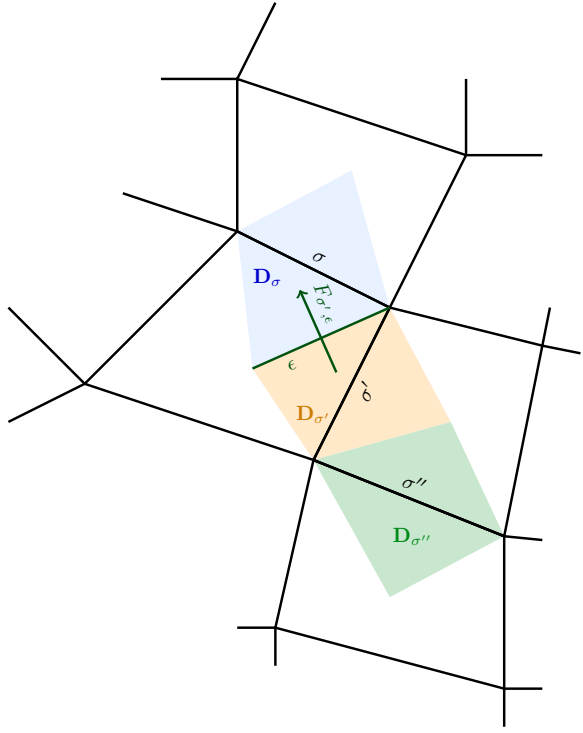

\medskip
We are now in a position to give the algorithm used to compute the quantities $u_{\edged,i}^n$:
\begin{list}{-}{\itemsep=0.5ex \topsep=0.5ex \leftmargin=1.5cm \labelwidth=0.6cm \labelsep=0.5cm \itemindent=0.cm}\item[$(i)$] Compute a tentative value $\overline{u}_{\edged,i}^n$ with a convex combination of the values (e.g. the centered choice) in the surrounding faces.
\item[$(ii)$] The flux $F_{\edge,\edged}^n$ being given, determine the upwind face $D_{\edge^-}$ and the downwind face $D_{\edge^+}$, and choose accordingly the neighbouring sets $\neigh_\edged(D_{\edge^-})$ and $\neigh_\edged(D_{\edge^+})$.
\item[$(iii)$] Compute an admissible interval $I^+ \cap I^-$ for $u_{\edged,i}$ by \eqref{interval}.
\item[$(iv)$] Compute $u_{\edged,i}^n$ by projecting the tentative value $\overline{u}_{\edged,i}^n$ into the interval obtained in the previous step.
\end{list}

\begin{rmrk}[Deriving an implicit MUSCL scheme]
Since this procedure is not linear, we cannot expect to derive an explicit formula to compute the values of the coefficients $a_\edged^\edge$. 
Their evaluation is, however, not necessary in order to define an explicit scheme: the presented algorithm univocally defines the value $u_{\edged,i}^n$. 
But, for this reason, we cannot easily define an implicit-in-time MUSCL scheme. 
However, it is still possible, using one of the following techniques:
\blist
\item a first technique would consist in an iterative process at each time step: in an inner loop, advance the velocity by replacing in the momentum equation the MUSCL convection operator at inner step $k$, $\dive_{\mathrm M}(\rho u_i \bfu)^k_\edge$, by $\dive_{\mathrm U}(\rho u_i \bfu)^{k+1}_\edge-\dive_{\mathrm U}(\rho u_i \bfu)^k_\edge+\dive_{\mathrm M}(\rho u_i \bfu)^k_\edge$, where the subscript $U$ denote the standard upwind convection operator ($\ie$ faces values $u_{\edged,i}$ are obtained through an upwind method) and the superscript $k+1$ indicate an implicit discretization, and then loop until acceptable convergence is reached;
\item an other technique would be to first compute the value $u^n_{\edged,i}$, then use \eqref{eq:MUSCLfacevalue} (or rather \eqref{interval}) to compute the coefficients $a_\edged^\edge$ thanks to $u^n_{\edged,i}$ and the $(u^n_\edge)_{\edge \in \edges}$. 
Then, express an implicit value at the interface $u^{n+1}_{\edged,i}$ as a linear combination of the $(u^{n+1}_\edge)_{\edge \in \edges}$ thanks to the $a_\edged^\edge$.
\elist
Note that both techniques are costlier from a computational point of view.
\end{rmrk}

%
%
\section{A discrete kinetic energy identity and some applications}

In this section, we first focus on the proposed higher-order finite volume convection operator and show that it satisfies an identity which may be seen as a building brick for the derivation of a kinetic energy balance (or, equivalently, an entropy identity for the entropy function $\eta(u_i)=\frac 1 2 u_i^2$).
We then give two applications of this result: first, we establish a stability property for a convection-diffusion problem, with an implicit discretization of the diffusion term, which may readily be extended to obtain stability estimates for incompressible or barotropic flows; second, we build a consistent scheme for the Euler equations based on a discrete solution of a (corrected) internal energy balance.
%
%
\subsection{A local identity for the discrete convection operator}\label{sec:kin}

In the continuous setting, let us assume that the mass balance equation  \eqref{eq:cont_mass_balance} holds.
Let $1\le i \le d$;  for sufficiently regular density and velocity functions, using twice the mass balance to switch from a convection to a transport operator for $u_i$ and then from a transport back to a convection operator for $u_i^2$, leads to:
\begin{equation} \label{eq:ki}
u_i\ \Bigl( \partial_t (\rho u_i) + \dive (\rho u_i \bfu) \Bigr) =
\rho u_i\ \Bigl( \partial_t u_i + \bfu \cdot \gradi u_i \Bigr) =
\frac 1 2 \rho \Bigl( \partial_t (u_i^2) + \bfu \cdot \gradi (u_i^2) \Bigr) =
\partial_t (\rho \frac{u_i^2} 2) + \dive (\rho \frac{u_i^2} 2 \bfu).
\end{equation}
Our aim here is to derive a discrete analogue of this identity.
For the sake of simplicity, we focus on the term $u_i  C(\rho,u)_{\edge,i}^{n+1}$ for the internal faces $\edge \in \edgesint$ of the mesh, where $C(\rho,u)_{\edge,i}^{n+1}$ is  the discrete convection operator defined by \eqref{eqdef:discrete-convection}.
We mimick the derivation of the identity \eqref{eq:ki} and therefore  recast the convection term as a transport one; in order to do so, 
we again suppose that the dual mass fluxes and the face densities are constructed to ensure that a discrete mass balance of the form \eqref{eq:dual_mass_b} holds over the dual cells.

\medskip
We are now in position to state a discrete analogue to Equation \eqref{eq:ki}, which does not feature a null right-hand side but a rest term.
This result can be seen as a direct consequence of \cite[Lemma A1]{her-18-cons}; for the sake of clarity, we reformulate it here in a way that is more convenient for the applications of this paper.

\begin{lemma}[Approximate transport operator for the kinetic energy]\label{lem:kin_en}
Assume that Equation \eqref{eq:dual_mass_b} holds.
Then, for $1 \leq i \leq d$, $\edge \in \edges$ and $0 \leq n \leq N-1$:
\begin{align*}
|D_\edge|u_i  C(\rho,u)_{\edge,i}^{n+1}=
\frac{|D_\edge|}{2\,\delta t}\,  \bigl( \rho^{n+1}_{D_\edge}\, (u_{i,\edge}^{n+1})^2 -\rho^n_{D_\edge}\, (u_{i,\edge}^n)^2 \bigr)
+ \frac 1 2 \sum_{\edged \in \edgesd(D_\edge)} F_{\edge,\edged}^n\,  (u_{i,\edged}^n)^2
+ \sum_{\edged \in \edgesd(D_\edge)} T_{\edge,\edged,i}^{n+1}
+ R_{\edge,i}^{n+1},
\end{align*} 
with
\begin{align} &
\label{eqdef-T}T_{\edge,\edged,i}^{n+1} = - \frac{1}{2} F_{\edge,\edged}^n\, (u_{i,\edged}^n-u_{i,\edge}^n)^2,
\\[1ex] &
R_{\edge,i}^{n+1} = 
\frac{ |D_\edge|}{2\,\delta t}\, \rho^{n+1}_{D_\edge}\ \bigl( u_{i,\edge}^{n+1}-u_{i,\edge}^n \bigr)^2
+ (u^{n+1}_{i,\edge}-u^n_{i,\edge})\sum_{\edged \in \edgesd(D_\edge)} F_{\edge,\edged}^n\, (u_{i,\edged}^n-u_{i,\edge}^n).
\end{align}
\end{lemma}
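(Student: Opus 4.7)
The strategy is to mimic the continuous chain of equalities \eqref{eq:ki}, using the dual mass balance \eqref{eq:dual_mass_b} (twice) to convert ``convection'' into ``transport'' and, at the discrete level, to absorb spurious residual terms into $R_{\edge,i}^{n+1}$. I would start by multiplying \eqref{eqdef:discrete-convection} by $|D_\edge|\,u_{\edge,i}^{n+1}$ and split the result into a discrete time-derivative contribution
$
\mathcal T_\edge^{n+1}=\tfrac{|D_\edge|}{\delta t}u_{\edge,i}^{n+1}\bigl(\rho_{D_\edge}^{n+1} u_{\edge,i}^{n+1}-\rho_{D_\edge}^n u_{\edge,i}^n\bigr)
$
and a convective contribution $\mathcal F_\edge^{n+1} = u_{\edge,i}^{n+1}\sum_{\edged} F_{\edge,\edged}^n u_{\edged,i}^n$, each treated separately before being recombined.

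For $\mathcal T_\edge^{n+1}$, I would apply the elementary identity
\[
a(\alpha a-\beta b)=\tfrac 12(\alpha a^2-\beta b^2)+\tfrac 12(\alpha-\beta)\,a^2+\tfrac 12\beta(a-b)^2
\]
with $(a,b,\alpha,\beta)=(u_{\edge,i}^{n+1},u_{\edge,i}^n,\rho_{D_\edge}^{n+1},\rho_{D_\edge}^n)$. This produces the target discrete time derivative $\tfrac{|D_\edge|}{2\delta t}\bigl(\rho_{D_\edge}^{n+1}(u_{\edge,i}^{n+1})^2-\rho_{D_\edge}^n(u_{\edge,i}^n)^2\bigr)$, a residual $\tfrac{|D_\edge|}{2\delta t}(\rho_{D_\edge}^{n+1}-\rho_{D_\edge}^n)(u_{\edge,i}^{n+1})^2$, and $\tfrac{|D_\edge|}{2\delta t}\rho_{D_\edge}^n(u_{\edge,i}^{n+1}-u_{\edge,i}^n)^2$. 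The mass balance \eqref{eq:dual_mass_b} replaces $\tfrac{|D_\edge|}{\delta t}(\rho_{D_\edge}^{n+1}-\rho_{D_\edge}^n)$ by $-\sum_\edged F_{\edge,\edged}^n$, and a second invocation replaces $\rho_{D_\edge}^n$ by $\rho_{D_\edge}^{n+1}$ in the last term at the cost of an additional $\tfrac 12(u_{\edge,i}^{n+1}-u_{\edge,i}^n)^2\sum_\edged F_{\edge,\edged}^n$; the first summand of $R_{\edge,i}^{n+1}$ now appears.

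For $\mathcal F_\edge^{n+1}$, I would write $u_{\edge,i}^{n+1}=u_{\edge,i}^n+(u_{\edge,i}^{n+1}-u_{\edge,i}^n)$. In the $u_{\edge,i}^n$ part, the product identity $u_{\edge,i}^n\,u_{\edged,i}^n=\tfrac 12(u_{\edged,i}^n)^2+\tfrac 12(u_{\edge,i}^n)^2-\tfrac 12(u_{\edged,i}^n-u_{\edge,i}^n)^2$ directly produces the advective kinetic-energy flux $\tfrac 12 \sum_\edged F_{\edge,\edged}^n(u_{\edged,i}^n)^2$, the exact sum $\sum_\edged T_{\edge,\edged,i}^{n+1}$ via \eqref{eqdef-T}, and a residual $\tfrac 12(u_{\edge,i}^n)^2\sum_\edged F_{\edge,\edged}^n$; the $(u_{\edge,i}^{n+1}-u_{\edge,i}^n)$ part simply remains as $(u_{\edge,i}^{n+1}-u_{\edge,i}^n)\sum_\edged F_{\edge,\edged}^n u_{\edged,i}^n$.

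The last step, which is where all the bookkeeping has to close up cleanly, is to collect the three leftover flux-weighted scalars produced above and check that they assemble into the second summand of $R_{\edge,i}^{n+1}$. Using the elementary simplification
\[
(u_{\edge,i}^n)^2-(u_{\edge,i}^{n+1})^2+(u_{\edge,i}^{n+1}-u_{\edge,i}^n)^2=-2\,u_{\edge,i}^n(u_{\edge,i}^{n+1}-u_{\edge,i}^n),
\]
the three flux-weighted residuals combine into $(u_{\edge,i}^{n+1}-u_{\edge,i}^n)\sum_\edged F_{\edge,\edged}^n(u_{\edged,i}^n-u_{\edge,i}^n)$, matching $R_{\edge,i}^{n+1}$ exactly. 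The only real difficulty in the argument is resisting the temptation to stop after one use of the mass balance: the conversion of $\rho_{D_\edge}^n$ into $\rho_{D_\edge}^{n+1}$ inside the $(u_{\edge,i}^{n+1}-u_{\edge,i}^n)^2$ factor is what furnishes the missing $\tfrac 12(u_{\edge,i}^{n+1}-u_{\edge,i}^n)^2\sum_\edged F_{\edge,\edged}^n$ needed for the final trinomial cancellation; everything else is routine algebra.
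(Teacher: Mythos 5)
Your proposal is correct: every elementary identity you invoke checks out, and the leftover flux-weighted terms do cancel into the second summand of $R_{\edge,i}^{n+1}$ exactly as you claim. The route is essentially the paper's own — a direct algebraic verification that multiplies by $u_{i,\edge}^{n+1}$, uses the dual mass balance \eqref{eq:dual_mass_b} twice and the quadratic identities $2a(a-b)=a^2-b^2+(a-b)^2$ — the only difference being that the paper explicitly passes through the non-conservative (transport) form as an intermediate step before converting back, whereas you expand the conservative form directly and verify the final cancellation by hand.
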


\begin{proof}
Let $\edge \in \edgesint$ and $0 \leq n < N-1$.
Subtracting the dual mass balance equation \eqref{eq:dual_mass_b} multiplied by $u_{i,\edge}^n$ yields:
\[
\frac 1 {\delta t}\ (\rho^{n+1}_{D_\edge} u_{i,\edge}^{n+1}-\rho^n_{D_\edge} u_{i,\edge}^n)+ \frac{1}{|D_\edge|}\sum_{\edged \in \edgesd(D_\edge)} F_{\edge,\edged}^n u_{i,\edged}^n =
\frac{\rho^{n+1}_{D_\edge}}{\delta t}\ (u_{i,\edge}^{n+1} - u_{i,\edge}^n)
+ \frac 1 {|D_\edge|}\sum_{\edged \in \edgesd(D_\edge)} F_{\edge,\edged}^n (u_{i,\edged}^n-u_{i,\edge}^n).
\]
The left-hand side of this relation is a discretization of the conservative form of the convection operator $\partial_t(\rho u_i) + \dive(\rho u_i \bfu)$, while the right-hand side may be seen as a discretization of the non-conservative form $\rho (\partial_t u_i + \bfu\cdot \gradi u_i)$.
We now multiply the right-hand side of the previous equality (which is precisely $C(\rho,u)_{\edge,i}^{n+1}$) by $|D_\edge|\ u^{n+1}_{i,\edge}$ and use (twice) the identity $2 a(a-b)=a^2-b^2+(a-b)^2$, once for the time derivative term and once for the "velocity gradient term", to obtain:
\begin{align*}
    |D_\edge|u_i  C(\rho,u)_{\edge,i}^{n+1} =
    &
    \frac{|D_\edge|}{2\,\delta t}\, \rho^{n+1}_{D_\edge}\ \bigl( (u_{i,\edge}^{n+1})^2-(u_{i,\edge}^n)^2 \bigr) + \frac{ |D_\edge|}{2\,\delta t}\, \rho^{n+1}_{D_\edge}\ \bigl( u_{i,\edge}^{n+1}-u_{i,\edge}^n \bigr)^2
    \\[1ex] &
    + u^n_{i,\edge} \sum_{\edged \in \edgesd(D_\edge)} F_{\edge,\edged}^n\ (u_{i,\edged}^n-u_{i,\edge}^n)
    + (u^{n+1}_{i,\edge}-u^n_{i,\edge}) \sum_{\edged \in \edgesd(D_\edge)} F_{\edge,\edged}^n\ (u_{i,\edged}^n-u_{i,\edge}^n)
    \\
    =
    &
    \frac{|D_\edge|}{2\,\delta t}\, \rho^{n+1}_{D_\edge}\ \bigl( (u_{i,\edge}^{n+1})^2-(u_{i,\edge}^n)^2 \bigr) + \frac{ |D_\edge|}{2\,\delta t}\, \rho^{n+1}_{D_\edge}\ \bigl( u_{i,\edge}^{n+1}-u_{i,\edge}^n \bigr)^2
    \\[1ex] &
    + \frac 1 2 \sum_{\edged \in \edgesd(D_\edge)} F_{\edge,\edged}^n  \bigl((u_{i,\edged}^n)^2 - (u_{i,\edge}^n)^2 \bigr)
    - \frac{1}{2}\sum_{\edged \in \edgesd(D_\edge)} F_{\edge,\edged}^n (u_{i,\edged}^n-u_{i,\edge}^n)^2 \\[1ex] &
    + (u^{n+1}_{i,\edge}-u^n_{i,\edge})\sum_{\edged \in \edgesd(D_\edge)} F_{\edge,\edged}^n (u_{i,\edged}^n-u_{i,\edge}^n).
\end{align*}
We now reverse the trick used previously to switch from the non-conservative form of the convection operator (this time for $\frac 12 u_i^2$) to the conservative form (which amounts to add this time Equation \eqref{eq:dual_mass_b} multiplied by $\frac 12 |D_\edge|\ (u_{i,\edge}^n)^2$).
This changes the first term of the first and second lines of the right-hand side, and yields the desired identity.
\end{proof}

In the previous lemma, the expression of the approximation of $u_i$ at the dual faces is not specified.
Let us then discuss the properties of the remainder term $T_{\edge,\edged,i}^{n+1}$ defined by \eqref{eqdef-T} for the specific choice of $u_i$  given by the MUSCL scheme introduced in the previous section.
For a dual face $\edged = D_\edge | D_{\edge'}$ for $\edge,\edge' \in \edges$, since the set $\neigh_\edged(D_{\edge^+})$ of neighbours of the dual cell $D_{\edge^+}$ is chosen as $\{D_{\edge^-}\}$, the condition \eqref{H1} yields:
\[
u_{i,\edged}^n= (1 - \frac {\xi_{i,\edged}^n} 2)\, u_{i,\edge^-}^n + \frac{\xi_{i,\edged}^n} 2 \,u_{i,\edge^+}^n,
\]
with $\xi_{i,\edged}^n \in [0,\xi^+]$, so $\xi_{i,\edged}^n \in [0,1]$ if we choose $\xi^+=1$, as in the numerical experiments of Section \ref{sec:num} below.
In this relation, we recall that $D_{\edge^-}$ (resp. $D_{\edge^+}$) is the upwind (resp. downwind) dual cell with respect to $\edged$, \ie\ the dual cell of $\{D_\edge,D_{\edge'}\}$ such that $F_{\edge^-,\edged}^n \geq 0$ (resp. $F_{\edge^+,\edged}^n \leq 0$).
Considering both possible signs of $F_{\edge,\edged}^n$, we obtain the following expression for $u_{i,\edged}^n$:
\[
u_{i,\edged}^n = \frac{u_{i,\edge}^n + u_{i,\edge'}^n} 2 + \frac 1 2 \mathrm{sgn}(F_{\edge,\edged}^n)\ (1-\xi_{i,\edged}^n)\ (u_{i,\edge}^n - u_{i,\edge'}^n).
\]
We recover a classical presentation of the convection scheme as a centered scheme possibly corrected by a diffusion term; indeed, $\xi_{i,\edged}^n=1$ indeed corresponds to the centered scheme, while  $F_{\edge,\edged}^n \mathrm{sgn}(F_{\edge,\edged}^n)\ (1-\xi_{i,\edged}^n) \geq 0$, so that the second term can be seen as a numerical diffusion term.
With this expression of $u_{i,\edged}^n$, the term $T_{\edge,\edged,i}^{n+1}$ reads:
\begin{equation}\label{eq:T}
T_{\edge,\edged,i}^{n+1} = - \frac 1 8\, \bigl(1 + (1-\xi_{i,\edged}^n)^2\bigr)\, F_{\edge,\edged}^n\, (u_{i,\edge}^n-u_{i,\edge'}^n)^2 + \frac 1 4\, (1-\xi_{i,\edged}^n)\ |F_{\edge,\edged}^n|\, (u_{i,\edge}^n-u_{i,\edge'}^n)^2.
\end{equation}
Thanks to the conservativity of the dual mass fluxes, the first part of the right-hand side is also conservative; the second part may be seen as a numerical dissipation.
%
%
\subsection{A stability result}\label{sec:stab}

Suppose, for the sake of simplicity, that a convection-diffusion equation of the form:
\begin{equation} \label{eq:conv_diff}
\partial_t (\rho u_i) + \dive (\rho u_i \bfu)- \mu\lapi u_i = 0,
\end{equation}
holds for the $i$-th component of the velocity, where $\mu$ is a positive parameter.
This equation can be seen as a momentum balance equation with no source term and without the pressure gradient term.
The diffusion term may arise either from a physical fluid viscosity or from a numerical stabilisation term.
Assuming that a mass balance equation holds, multiplying Equation \eqref{eq:conv_diff} by $u_i$ yields, by the same computation for the convection term as in the previous section:
\begin{equation} \label{eq:pre-ec}
\frac 1 2\, \partial_t(\rho u_i^2) + \frac 1 2 \dive(\rho u_i^2 \bfu)-\mu\, \dive (u_i \gradi u_i) + \mu\ \Vert \gradi u_i \Vert^2 = 0.
\end{equation}
Now suppose that the velocity is prescribed to zero on $\partial \Omega$. 
Integrating the previous formula over the domain $\Omega$, then using the divergence theorem for the convection term and Green's identity for the diffusion term yields:
\begin{equation} \label{eq:pre-global_ec}
\frac{1}{2}\int_\Omega \partial_t(\rho u_i^2) \dx + \mu \int_\Omega \Vert \gradi u_i \Vert^2 \dx = 0.
\end{equation}
Integrating in time, this equality yields a control of $\rho^{1/2} u_i$ in the $L^\infty(0,T; L^2(\Omega))$ norm and of $\mu^{1/2} u_i$ in the $L^2(0,T;H^1(\Omega))$ norm.
In addition, we remark that, for $\varphi \in C^\infty_c(\Omega \times[0,T))$,
\begin{multline} \hspace{10ex}
\int_0^T \int_\Omega \mu\, \dive (u_i \gradi u_i)\ \varphi \dx \dt = - \int_0^T \int_\Omega \mu\, u_i \gradi u_i \cdot \gradi \varphi \dx \dt
\\
\leq \mu^{1/2}\ \Vert u_i \Vert_{L^2(\Omega\times(0,T))}\ \Vert \mu^{1/2} u_i \Vert_{L^2(0,T;H^1(\Omega))} \ \Vert \gradi \varphi \Vert_{L^\infty(\Omega\times(0,T))}.
\hspace{10ex} \end{multline}
If we consider a sequence of solutions to Equation \eqref{eq:conv_diff} obtained with a sequence of vanishing viscosities, provided that $\rho$ is bounded by below by a positive real number (so $u_i$ is controlled in $L^2$), this integral thus tends to zero, and Equation \eqref{eq:pre-ec} may be used to obtain an entropy inequality, that is
\[
\frac 1 2\, \partial_t(\rho u_i^2) + \frac 1 2 \dive(\rho u_i^2 \bfu) \leq 0,
\]
in the distributional sense.
Dealing with the real momentum balance equation requires coping with a pressure gradient, which is standard for incompressible and barotropic flows.
In both cases, the estimate of $\gradi p \cdot \bfu$ is obtained thanks to the mass balance equation and the equation of state.
The simplest situation is the incompressible case, where:
\[
\gradi p \cdot \bfu = \dive(p\,\bfu) - p\,\dive \bfu = \dive(p\,\bfu),
\]
so this term yields an entropy flux, and its integral over the computational domain vanishes thanks to the boundary conditions.
The quantity $\frac 1 2 \rho |\bfu|^2$ is now referred to as the kinetic energy balance and the analogues of Equations \eqref{eq:pre-ec} and \eqref{eq:pre-global_ec} as the local and global, respectively, kinetic energy balances.

\medskip
Our goal here is to demonstrate a similar result for higher-order finite volume convection operators, taking the form introduced in the previous section.
It is well-known that such an operator is not $L^2$-stable (while the first-order upwind discretization is, under a $\cfl$ condition), but we show here that the $L^2$-stability is recovered when a non-vanishing diffusion is added, for small enough time steps.
As in the continuous setting in the above introduction, we restrict ourselves to the discretization of the convection-diffusion problem for a component of the velocity, in such a way that the proposed analysis may be used as a building brick for the study of staggered schemes for both incompressible and compressible flows.
We suppose homogeneous Dirichlet boundary conditions on the whole boundary, so the velocity is set to zero on external faces, and the scheme reads, for a given index $i$, $1 \leq i \leq d$:
\begin{equation} \label{eq:conv-diff_scheme}
\frac 1 {\delta t}\, (\rho^{n+1}_{D_\edge} u_{i,\edge}^{n+1}-\rho^n_{D_\edge} u_{i,\edge}^n) + \dive(\rho u_i \bfu)^n_\edge - (\mu \lapi u_i)^{n+1}_\edge=0, \quad \forall \edge \in \edgesint.
\end{equation}
The discrete mass balance equation \eqref{eq:dual_mass_b} over the dual cells is supposed to hold.
The discretization of the diffusion term is implicit in time and does not need to be precisely defined at this point.
We only need to suppose that the following inequality holds:
\begin{equation} \label{eq:centeredstabilitycondition}
- \sum_{\edge \in \edgesint}|D_\edge|\ u^{n+1}_{i,\edge} (\mu \lapi u^{n+1}_i)_\edge \geq  
\sum_{\substack{\edged \in \edgesdint,\\ \edged=D_\edge|D_{\edge'} \\
\edged \subset K }} \mu_\edged^n\,h_K^{d-2}\,(u^{n+1}_{i,\edge}-u^{n+1}_{i,\edge'})^2.
\end{equation}
This relation might be seen as a discrete analogue to the inequality $-\int_\Omega u_i \mu \lapi u_i \dx \geq \int_\Omega \mu \gradi u_i \cdot \gradi u_i \dx$ (recall that we have supposed homogeneous Dirichlet boundary conditions).
The viscosity $\mu_\edged^n$ is supposed to be positive (and therefore, at least for a given discretization, bounded away from zero), and the right-hand side of Inequality \eqref{eq:centeredstabilitycondition} defines a discrete $H^1$ semi-norm (precisely speaking, is equal to the square of a $H^1$ semi-norm), which we denote $|u_i|_\edges$.
If the diffusion operator is given by the Rannacher-Turek finite element, this bound might be obtained thanks to the equivalence between the $|\cdot|_\edges$ norm and the broken $H^1$ semi-norm, which holds under regularity assumptions for the cells.

\medskip
The following result is a global (\ie\ integrated over the computational domain) estimate, which may be seen as a discrete analogue of Equation \eqref{eq:pre-global_ec}.

\begin{theorem}[Stability for a convection-diffusion equation] \label{th:kin}
Assume that Equation \eqref{eq:dual_mass_b} holds, that $\rho^n_{D_\edge} \geq  0$ for $\edge \in \edgesint$ and $0\leq n \leq N-1$, and that the coercivity condition \eqref{eq:centeredstabilitycondition} for the diffusion term holds.
Suppose that the time step satisfies the following set of inequalities:
\begin{multline} \label{eq:dt_stab} \hspace{7ex}
\eta^n =\frac{\delta t}{\tau^n} \leq 1 \mbox{ for } 0 \leq n \leq N-1, \mbox{ with }
\\
\tau^n = \min \Bigl\{
\frac{2^{1-d} \ h_K^{d-2} \ \mu_\edged^n}   { (F_{\edge,\edged}^n)^2\ \bigl( \dfrac 1 {|D_\edge|\ \rho^{n+1}_{D_\edge}} + \dfrac 1 {|D_{\edge'}|\ \rho^{n+1}_{D_{\edge'}}} \bigr)},
\ \edged \in \edgesdint,\ \edged=D_\edge|D_{\edge'},\ \edged \subset K
\Bigr\}. 
\hspace{7ex} \end{multline}
Then the scheme \eqref{eq:conv-diff_scheme}, using the proposed MUSCL scheme with $\xi^+=1$, is stable in the $L^2$-norm, in the sense that its solution satisfies the following inequality:
\begin{align}\label{eq:stab_cent}
\frac 1 2\, \sum_{\edge \in \edgesint} |D_\edge| \ \bigl(\rho^{n+1}_{D_\edge}(u^{n+1}_\edge)^2 - \rho^0_{D_\edge}(u^{0}_\edge)^2 \bigr) \leq \eta^0\ \delta t\ |u^0|_\edges^2.
\end{align}
Note that the right-hand side depends only on the initial conditions for the velocity, for the density, and the density at the end of the first time step
\end{theorem}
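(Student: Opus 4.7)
The plan is to test the scheme against $u_{i,\edge}^{n+1}$, invoke the local kinetic energy identity of Lemma \ref{lem:kin_en} to rewrite the convective contribution, use the coercivity hypothesis \eqref{eq:centeredstabilitycondition} on the diffusion term, and telescope the resulting one-step energy inequality. First I would multiply \eqref{eq:conv-diff_scheme} by $|D_\edge|\,u_{i,\edge}^{n+1}$ and sum over $\edge \in \edgesint$. By Lemma \ref{lem:kin_en} the convective contribution splits into a discrete time derivative of the kinetic energy, the centred convective flux $\frac{1}{2} F_{\edge,\edged}^n (u_{i,\edged}^n)^2$, the term $T_{\edge,\edged,i}^{n+1}$, and the remainder $R_{\edge,i}^{n+1}$. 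The antisymmetry of the dual mass fluxes makes the centred convective flux cancel upon summation, and the coercivity hypothesis produces the contribution $-|u^{n+1}|_\edges^2$ on the right-hand side of the resulting equality.

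Next I would handle $T_{\edge,\edged,i}^{n+1}$ via its explicit form \eqref{eq:T}: its first piece, weighted by $F_{\edge,\edged}^n$, is antisymmetric across each dual face and cancels in the sum; its second piece, weighted by $|F_{\edge,\edged}^n|$, is a non-negative numerical dissipation when $\xi^+ = 1$ and can be discarded from the estimate. The crux of the proof will be the control of $\sum_\edge R_{\edge,i}^{n+1}$, which is the sum of a positive kinetic contribution proportional to $(u_{i,\edge}^{n+1}-u_{i,\edge}^n)^2$ and a sign-indefinite cross term $(u_{i,\edge}^{n+1}-u_{i,\edge}^n)\sum_\edged F_{\edge,\edged}^n(u_{i,\edged}^n-u_{i,\edge}^n)$. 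A Young inequality with weight $|D_\edge|\rho_{D_\edge}^{n+1}/\delta t$ exactly exhausts the kinetic contribution of $R$ and leaves an upper bound depending only on $u^n$:
\[
-\sum_{\edge \in \edgesint} R_{\edge,i}^{n+1} \leq \sum_{\edge \in \edgesint} \frac{\delta t}{2\,|D_\edge|\,\rho_{D_\edge}^{n+1}} \Bigl(\sum_{\edged \in \edgesd(D_\edge)} F_{\edge,\edged}^n\,(u_{i,\edged}^n - u_{i,\edge}^n)\Bigr)^2.
\]
A Cauchy-Schwarz inside the squared sum, a swap of the summation order, and the MUSCL estimate $(u_{i,\edged}^n - u_{i,\edge}^n)^2 \leq (u_{i,\edge}^n - u_{i,\edge'}^n)^2$ valid for $\xi^+ \leq 1$ then recast this bound as a sum over internal dual faces $\edged = D_\edge|D_{\edge'}$ of expressions involving $\delta t\,(F_{\edge,\edged}^n)^2\,(u_{i,\edge}^n-u_{i,\edge'}^n)^2\bigl(1/(|D_\edge|\rho_{D_\edge}^{n+1}) + 1/(|D_{\edge'}|\rho_{D_{\edge'}}^{n+1})\bigr)$.

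At this stage the CFL condition \eqref{eq:dt_stab} applies exactly: the factor $2^{1-d} h_K^{d-2}\mu_\edged^n$ appearing in $\tau^n$ is tailored to absorb the combinatorial constant coming from the Cauchy-Schwarz, since the number of dual faces surrounding a dual cell is bounded by $2^d$ for quadrangular or hexahedral primal meshes, so that one obtains $-\sum_\edge R_{\edge,i}^{n+1} \leq \eta^n\,|u^n|_\edges^2$. Combining all the previous contributions yields the one-step inequality
\[
\frac{1}{2}\sum_{\edge \in \edgesint}|D_\edge|\bigl(\rho_{D_\edge}^{n+1}(u_{i,\edge}^{n+1})^2 - \rho_{D_\edge}^n(u_{i,\edge}^n)^2\bigr) + \delta t\,|u^{n+1}|_\edges^2 \leq \eta^n\,\delta t\,|u^n|_\edges^2.
\]
Summing from $n = 0$ to $n = N-1$ telescopes the kinetic energy, while the diffusion term at step $k+1$ on the left combines with the diffusion term at step $k$ on the right into the non-negative residual $\delta t(1-\eta^k)|u^k|_\edges^2$; only the $k = 0$ contribution $\eta^0\,\delta t\,|u^0|_\edges^2$ survives on the right, giving \eqref{eq:stab_cent}.

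The main obstacle will be the accounting that makes the bound on $\sum R$ match the very definition of $\tau^n$: the Young inequality has to be weighted so as to completely absorb the positive part of $R$ (otherwise the bound would still involve $u^{n+1}$, blocking the telescoping step), and the two contributions coming from $\edge$ and $\edge'$ on either side of a given dual face must be grouped so that both inertial weights $|D_\edge|\rho_{D_\edge}^{n+1}$ and $|D_{\edge'}|\rho_{D_{\edge'}}^{n+1}$ appear symmetrically in the denominator, exactly matching the form of $\tau^n$ in \eqref{eq:dt_stab}.
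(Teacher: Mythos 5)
Your proposal is correct and follows essentially the same route as the paper's proof: sum the identity of Lemma \ref{lem:kin_en}, kill the centred flux by conservativity and the $T$-term by its sign, absorb the positive part of $R$ exactly via a Young inequality weighted by $|D_\edge|\rho^{n+1}_{D_\edge}/\delta t$, bound the rest by Cauchy--Schwarz over the $2^d$ dual faces and the MUSCL property $|u^n_{i,\edged}-u^n_{i,\edge}|\le|u^n_{i,\edge'}-u^n_{i,\edge}|$, then telescope in time using $\eta^k\le1$ so that only the $k=0$ convective residual survives. All the key mechanisms you single out (exact exhaustion of $\mathcal R_1$, symmetric grouping of the two sides of each dual face to match $\tau^n$, and the offset pairing of the step-$k$ residual with the step-$k$ diffusion term) are precisely those of the paper.
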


\begin{rmrk}[Evaluation of $\tau^n$]
The dual mass fluxes are obtained as a linear combination, with bounded coefficients, of the primal mass fluxes, see \cite{ans-11-sta}.
More specifically, for $\edged$ included in the cell $K$ and $\edge$ a face of $K$,  
\[
F_{\edge,\edged}^n = \sum_{\edge' \in \edges(K)} \alpha_K^{\edged,\edge'} F_{K,\edge'}^n,
\quad \sum_{\edge' \in \edges(K)} |\alpha_K^{\edged,\edge'}| = \alpha \mbox{ with } \alpha =2^{2-d},
\]
where $F_{K,\edge'}^n=|\edge'|\ \rho_{\edge'}^n\, \bfu_{\edge'}^n \cdot \bfn_{K,\edge'}$.
For the sake of simplicity, let us suppose that the density is equal to a constant value, which we denote by $\rho$, and that the velocity is bounded by a quantity $u_{\max}$, which yields $|F_{\edge,\edged}^n| \leq 2^{2-d} |\edge| \ \rho \ u_{\max}$, for $\edged$ included in the cell $K$ and $\edge$ a face of $K$.
Using $|D_\edge|> |D_{K,\edge}| = |K|/(2d)$ and $|\edge| < h_K^{d-1},\ \edge \in \edges(K)$, we get
\[
\tau^n \geq \frac{2^{d-5}} d\ \frac{|K|}{h_K^d}\ \min \bigl\{ \frac{\mu_\edged^n}{u_{\max}^2 \rho }, \ \edged \in \edgesdint \bigr\},
\]
which shows that $\tau^n$ only depends on the viscosity, the density, the velocity and the regularity of the mesh but not on the space step.
\end{rmrk}
\begin{proof}
Let $0 \leq n \leq N-1$ and $1 \leq i \leq d$.
Summing the result of the previous lemma over $\edge \in \edgesint$ and using inequality \eqref{eq:centeredstabilitycondition} yields
\[
\frac 1 {2\, \delta t} \sum_{\edge \in \edgesint} |D_\edge|\ \bigl( \rho^{n+1}_{D_\edge} (u_{i,\edge}^{n+1})^2 - \rho^n_{D_\edge} (u_{i,\edge}^n)^2 \bigr) \leq - \mathcal R_1 - \mathcal R_2 - \mathcal C_1 - \mathcal C_2 - \mathcal D,
\]
where the terms on the right-hand side are defined by
\begin{align*} 
    & \mathcal R_1 = \frac 1 {2\, \delta t} \sum_{\edge \in \edgesint} |D_\edge|\ \rho^{n+1}_{D_\edge}\, ( u_{i,\edge}^{n+1}-u_{i,\edge}^n)^2, 
    \\
    & \mathcal R_2 = \sum_{\edge \in \edgesint}\ (u^{n+1}_{i,\edge}-u^n_{i,\edge}) \sum_{\substack{\edged \in \edgesd(D_\edge),\\ \edged=D_\edge|D_{\edge'}}} F_{\edge,\edged}^n (u_{i,\edged}^n-u_{i,\edge}^n),
    \\ 
    & \mathcal C_1 = \frac 1 2 \sum_{\edge \in \edgesint} \sum_{\edged \in \edgesd(D_\edge)} F_{\edge,\edged}^n\,  (u_{i,\edged}^n)^2, 
    \\
    & \mathcal C_2 = -\frac 1 2 \sum_{\edge \in \edgesint}\ \sum_{\substack{\edged \in \edgesd(D_\edge),\\ \edged=D_\edge|D_{\edge'}}} T_{\edge,\edged,i}^{n+1},
    \\ 
    & \mathcal D = \sum_{\substack{\edged \in \edgesdint,\\ \edged=D_\edge|D_{\edge'},\\ \edged \subset K}} \mu_\edged^n\ h_K^{d-2}\ (u^{n+1}_{i,\edge}-u^{n+1}_{i,\edge'})^2.
\end{align*}
By conservativity, the sums $\mathcal C_1$ vanishes and, using the expression \eqref{eq:T} of $T_{\edge,\edged,i}^{n+1}$, the sum $\mathcal C_2$ is non-negative.
Let us now turn to the term $\mathcal R_2$.
By assumption on the convection scheme, we have $|u_{i,\edged}^n-u_{i,\edge}^n| \leq |u_{i,\edge'}^n-u_{i,\edge}^n|$.
Using the inequality $\displaystyle ab \leq \frac{a^2}{2 \varepsilon}+\frac{\varepsilon b^2}{2}$ for two real numbers $a$ and $b$ and $\varepsilon>0$ yields, with $\displaystyle \varepsilon=\frac{\delta t}{|D_\edge|\ \rho^{n+1}_{D_\edge}}$:
\[
\Bigl| (u^{n+1}_{i,\edge}-u^n_{i,\edge})\ \sum_{\substack{\edged \in \edgesd(D_\edge),\\ \edged=D_\edge|D_{\edge'}}} F_{\edge,\edged}^n \,(u^n_{i,\edge'}-u^n_{i,\edge}) \Bigr| \leq
\frac{|D_\edge|}{2\,\delta t}\, \rho^{n+1}_{D_\edge}\,(u^{n+1}_{i,\edge}-u^n_{i,\edge})^2
+\frac{\delta t}{2\ |D_\edge|\ \rho^{n+1}_{D_\edge}} \Bigl( \sum_{\substack{\edged \in \edgesd(D_\edge),\\ \edged=D_\edge|D_{\edge'}}} |F_{\edge,\edged}^n|\ |u^n_{i,\edge'}-u^n_{i,\edge}| \Bigr)^2.
\]
The sum of the first term over $\edge \in \edges$ is equal to $\mathcal R_1$, whereas using the Cauchy-Schwarz inequality $\left(\sum_{i=0}^nx_i\right)^2\leq n \sum_{i=0}^n(x_i)^2$ in the second term, with $n$ the number of the faces of a dual cell which is equal to $4$ if $d=2$ and $8$ if $d=3$ and thus may be written $2^d$, yields for $\mathcal R_2$:
\begin{multline*} \hspace{10ex}
- \mathcal R_2
\leq \mathcal R_1 + \sum_{\edge \in \edgesint} \frac{2^{d-1}\ \delta t}{|D_\edge|\ \rho^{n+1}_{D_\edge}} \sum_{\substack{\edged \in \edgesd(D_\edge),\\ \edged=D_\edge|D_{\edge'}}} \bigl( F_{\edge,\edged}^n(u^n_{i,\edge'}-u^n_{i,\edge}) \bigr)^2
\\
= \mathcal R_1 + \sum_{\substack{\edged \in \edgesdint,\\ \edged=D_\edge|D_{\edge'}}}  2^{d-1} \ \delta t
\ \bigl( \frac 1 {|D_\edge|\ \rho^{n+1}_{D_\edge}} + \frac 1 {|D_{\edge'}|\ \rho^{n+1}_{D_{\edge'}}} \bigr)\ (F_{\edge,\edged}^n)^2\ (u^n_{i,\edge'}-u^n_{i,\edge} )^2.
\hspace{10ex} \end{multline*}
Gathering all the previous information leads to:
\begin{multline*}
\frac 1 {2\, \delta t} \sum_{\edge \in \edgesint} |D_\edge|\ \bigl( \rho^{n+1}_{D_\edge} (u_{i,\edge}^{n+1})^2 - \rho^n_{D_\edge} (u_{i,\edge}^n)^2 \bigr) \leq
\\
\sum_{\substack{\edged \in \edgesdint,\\ \edged=D_\edge|D_{\edge'}}} 2^{d-1} \ \delta t
\ \bigl( \frac 1 {|D_\edge|\ \rho^{n+1}_{D_\edge}} + \frac 1 {|D_{\edge'}|\ \rho^{n+1}_{D_{\edge'}}} \bigr)\ (F_{\edge,\edged}^n)^2\ (u^n_{i,\edge'}-u^n_{i,\edge} )^2
- \sum_{\substack{\edged \in \edgesdint,\\ \edged=D_\edge|D_{\edge'}, \\ \edged \subset K}} \mu_\edged^n\ h_K^{d-2}\ (u^{n+1}_{i,\edge}-u^{n+1}_{i,\edge'})^2.
\end{multline*}
Summing this inequality over all time steps $t_k$ with $0\leq k \leq n$, we get:
\[
\frac 1 {2\, \delta t}\ \sum_{\edge \in \edgesint} |D_\edge|\ \bigl( \rho^{n+1}_{D_\edge} (u_{i,\edge}^{n+1})^2 - \rho^0_{D_\edge} (u_{i,\edge}^{0})^2 \bigr) \leq - \mathcal T^{n+1} + \mathcal S^n + \mathcal T^0,
\]
with
\begin{align*} &
\mathcal T^{n+1}=\sum_{\substack{\edged \in \edgesdint,\\ \edged=D_\edge|D_{\edge'},\\ \edged \subset K}} \mu_\edged^n h_K^{d-2} (u^{n+1}_{i,\edge}-u^{n+1}_{i,\edge'})^2,
\\ &
\mathcal S^n=\sum_{k=1}^n 
\sum_{\substack{\edged \in \edgesdint,\\ \edged=D_\edge|D_{\edge'},\\ \edged \subset K}} \Bigl( 2^{d-1}\ \delta t
\ \bigl( \frac 1 {|D_\edge|\ \rho^{k+1}_{D_\edge}} + \frac 1 {|D_{\edge'}|\ \rho^{k+1}_{D_{\edge'}}} \bigr)\ (F_{\edge,\edged}^k)^2 -\mu_\edged^k h_K^{d-2}\Bigr)\ (u^k_{i,\edge'}-u^k_{i,\edge} )^2,
\\ &
\mathcal T^0=\sum_{\substack{\edged \in \edgesdint,\\ \edged=D_\edge|D_{\edge'}}} 2^{d-1}\ \delta t
\ \bigl( \frac 1 {|D_\edge|\ \rho^1_{D_\edge}} + \frac 1 {|D_{\edge'}|\ \rho^1_{D_{\edge'}}} \bigr)\ (F_{\edge,\edged}^0)^2\ (u^0_{i,\edge'}-u^0_{i,\edge} )^2.
\end{align*}
The term $\mathcal T^{n+1}$ is obviously positive, the sum $\mathcal S^n$ is negative thanks to the assumption on the time step, and $\mathcal T_0 \leq \eta^0\ |u^0_i|_\edges^2$.
\end{proof}

\begin{rmrk}[Extension of this result to less-limited MUSCL schemes]
In the present case, we have seen that, since no geometrical interpolation for the velocity at the dual faces is possible, the choice $\xi^+=1$ is reasonable.
However, a stability result may still be obtained if, for some reason  only the condition $\xi \leq 2$ (\ie\ $\xi^+=2$) was imposed; in this case, the term $-\mathcal C_2$ is no longer positive, but satisfies
\[
-\mathcal C_2 \leq  \frac 1 2 \sum_{\substack{\edged \in \edgesdint,\\ \edged=D_\edge|D_{\edge'}}} |F_{\edge,\edged}^n|\, (u_{i,\edge'}^n-u_{i,\edge}^n)^2.
\]
To obtain a stability estimate, we need to absorb this term in $\mathcal D$, to obtain (indexing now the terms $\mathcal C_2$ and $\mathcal D$ with respect to time)
\[
-\mathcal C_2^{n+1} - \mathcal D^n \leq - \sum_{\substack{\edged \in \edgesdint,\\ \edged=D_\edge|D_{\edge'},\\ \edged \subset K}} (\mu')_\edged^n\ h_K^{d-2}\ (u^{n+1}_{i,\edge}-u^{n+1}_{i,\edge'})^2,
\]
with
\[
(\mu')_\edged^n\ h_K^{d-2} = \mu_\edged^n\ h_K^{d-2} - \frac 1 2 |F_{\edge,\edged}^{n+1}|,
\]
and to suppose that $(\mu')_\edged^n$ is bounded by below away from zero.
Note that, since $F_{\edge,\edged}^n$ is proportional to the measure of the faces, this assumption is satisfied when the space step is small enough.
The stability condition \eqref{eq:dt_stab} is then rephrased, switching $\mu_\edged^n$ to $(\mu')_\edged^n$.
In addition, the quantity $-C_2^0$ (which only depends on the initial condition) must now be added to the right-hand side of the stability inequality \eqref{eq:stab_cent}; this term may be recast as the $H^1$ semi-norm $|u^0|_\edges^2$ multiplied by a factor proportional to the space and time steps product.
\end{rmrk}
%
%
\subsection{A consistent "internal-energy-based" staggered scheme for the full Euler equations}\label{sec:corr_Euler}

For shock solutions of the Euler equations, only the total energy equation makes sense, because of its conservative character.
This relation reads:
\begin{equation}
\partial_t (\rho\, E) + \dive(\rho \, E \, \bfu) + \dive ( p \, \bfu )=0,
\end{equation}
where $E=\frac 1 2|\bfu|^2+e$, with $e$ the internal energy.
Formally, this equation may be seen as the sum of the kinetic balance:
\[
\partial_t (\rho E_k) + \dive \bigl(\rho\, E_k\, \bfu \bigr) + \gradi p \cdot \bfu = 0,\quad E_k=\frac 1 2|\bfu|^2,
\]
and the internal energy balance:
\begin{equation}\label{eq:e}
\partial_t (\rho e) + \dive(\rho e \bfu)+ p\, \dive \bfu =0.
\end{equation}
Solving this latter equation is appealing since a suitable discretization (both for the convection operator, with a maximum-principle-preserving approximation, and for the term $p\, \dive \bfu$, to take benefit of the fact that $p$ vanishes when $e$ vanishes) leads to a conservation of the positivity of the internal energy; combining this approach with a discretization of the mass balance equation which preserves the positivity of the density, we thus would obtain a scheme which preserves the convex of admissible states ($\rho \geq 0$, $e \geq 0$ and, thanks to the equation of state, $p \geq 0$), which is a non-trivial task (see e.g. \cite{cal-13-pos} and references herein). 
Note also that the total energy is a function of unknowns discretized on both the primal and the dual meshes, and discretizing only the internal energy balance allows circumventing the technical difficulty of building an approximation of such a "composite" unknown.
However, it may be anticipated (and is observed in practice) that a blunt discretization of Equation \eqref{eq:e} would yield a non-consistent scheme, giving solutions that do not respect the Rankine-Hugoniot jump conditions at shocks.
The problem stems from the fact that the discrete kinetic energy balance equation features remainder terms which may be seen as a dissipation associated with numerical diffusion, and which do not tend to zero when the time and space step tend to zero, but to measures borne by the shocks.
The technique initially proposed in \cite{her-14-cons} to solve this problem is to compensate these remainder terms in the internal energy balance, in the following sense.
Let us denote these terms  by $(\mathcal R_\edge^{n+1})_{\edge\in\edges,\ 0\leq n<N}$ and $\mathcal R$, $\Omega \times (0,T) \rightarrow \xR$ be the function defined by
\[
\mathcal R(\bfx,t) = \mathcal R_\edge^{n+1} \quad \mbox{for } \bfx \in D_\edge \mbox{ and } t \in (t_n,t_{n+1}).
\]
The corrective terms in the internal energy balance are denoted by $(\mathcal S_K^{n+1})_{K \in \mesh,\ 0\leq n<N}$, associated with a function
\[
\mathcal S(\bfx,t) = \mathcal S_K^{n+1} \quad \mbox{for } \bfx \in K \mbox{ and } t \in (t_n,t_{n+1}),
\]
and required to be such that the difference $\mathcal S - \mathcal R$ tends to zero in the distributional sense when the space and time steps tend to zero.
The consistency analysis may be found in \cite{her-21-cons}, and semi-explicit or explicit-in-time variants of the scheme may be found in \cite{her-14-cons,her-18-cons,gas-18-mus}.
In all these works, the discrete kinetic energy balance is obtained from a first-order upwind discretization of the convection operator in the momentum balance; we generalize this construction, here.

\medskip
From the consistency analysis \cite{her-21-cons}, it appears that only non-conservative terms have to be kept in the remainder of the discrete kinetic energy balance, the conservative terms being possibly disregarded or not (they vanish in the limit of space and time steps tending to zero).
From Lemma \ref{lem:kin_en}, it thus appears that a candidate for $\mathcal R_\edge^{n+1}$ is obtained by adding to $R_\edge^{n+1}$ the non-conservative part of $\sum_{\edged \in \edgesd(D_\edge)} T_{\edge,\edged,i}^{n+1}$, and summing over the component index:
\begin{multline*}
\mathcal R_\edge^{n+1} =
\frac{ |D_\edge|}{2\,\delta t}\, \rho^{n+1}_{D_\edge}\ \bigl| \bfu_\edge^{n+1}-\bfu_\edge^n \bigr|^2
+ \sum_{i=1}^d (u^{n+1}_{i,\edge}-u^n_{i,\edge})\sum_{\edged \in \edgesd(D_\edge)} F_{\edge,\edged}^n\, (u_{i,\edged}^n-u_{i,\edge}^n) \\
+ \sum_{i=1}^d \sum_{\edged \in \edgesd(D_\edge)} \frac 1 4\, (1-\xi_{i,\edged}^n)\ |F_{\edge,\edged}^n|\, (u_{i,\edge}^n-u_{i,\edge}^n)^2,
\end{multline*}
where $\rho^{n+1}_{D_\edge}$ is a weighted average of the density in the neighbouring cells, defined by \eqref{eqdef:rho-Dsigma}.
For $\edge \in \edgesint$, $\edge=K|L$, the terms of $\mathcal R_\edge^{n+1}$ are distributed to $K$ and $L$ to obtain $\mathcal S_K^{n+1} = \mathcal S_{K,1}^{n+1} + \mathcal S_{K,2}^{n+1} + \mathcal S_{K,3}^{n+1}$ with:
\begin{align*} &
\mathcal S_{K,1}^{n+1} = \frac{\rho_K}{2\ \delta t} \sum_{\edge \in \edges(K)} |D_{K,\edge}|\ \bigl| \bfu_\edge^{n+1}-\bfu_\edge^n \bigr|^2,
\\ &
\mathcal S_{K,2}^{n+1} = \sum_{i=1}^d\ \sum_{\edge \in \edges(K)} (u^{n+1}_{i,\edge}-u^n_{i,\edge})\sum_{\edged \in \edgesd(D_\edge),\ \edged \subset K} F_{\edge,\edged}^n\, (u_{i,\edged}^n-u_{i,\edge}^n),
\\ &
\mathcal S_{K,3}^{n+1} = \sum_{i=1}^d\ \sum_{\edged \subset K,\ \edged=\edge|\edge'} \frac 1 2\, (1-\xi_{i,\edged}^n)\ |F_{\edge,\edged}^n|\, (u_{i,\edge}^n-u_{i,\edge'}^n)^2.
\end{align*}
%
\section{Numerical tests}\label{sec:num}

The discretization of the convection operator presented in the above paragraphs was implemented in the open-source CALIF$^3$S software developed at IRSN \cite{califs}.
We now present the results obtained with CALIF$^3$S, namely a comparison between the upwind, centered, and MUSCL choices, for several classical tests of the literature for incompressible, barotropic, and compressible flows.

%
%
\subsection{Incompressible Navier-Stokes equation}

We first turn to the incompressible Navier-Stokes equations, which read, on a domain $\Omega$:
\begin{subequations} \label{eq:ns_incomp}
\begin{align} &
\partial_t (\rho u_i) + \dive(\rho u_i \bfu) + \partial_i p - \dive(\mu(\gradi \bfu+\gradi \bfu^t))_i= 0,  \quad 1 \leq i \leq d,
\\ &
\dive(\bfu) = 0.
\end{align}
\end{subequations}
Here, we suppose that the density is constant, and we set $\rho = 1$ for the sake of simplicity. 
These equations must be supplemented by initial conditions for the velocity and suitable (especially for stability) boundary conditions, which are specified in the presentation of each of the tests, below.
%
%
\subsubsection{The scheme}

This system is solved using a projection scheme (see \cite{gue-06-ano} for an overview), which consists in the two following steps:
\begin{subequations} \label{eq:inc_sch}
\begin{flalign} \nonumber &
\mbox{\textbf{Prediction step} -- Solve for $\Tilde{\bfu}^{n+1}$: }
\\ & \qquad
\begin{multlined}[b][10.5cm] 
\text{ For } 1 \leq i \leq d,\ \forall \edge \in \edges, \quad
\frac 1 {\delta t}\, \left(\tilde \bfu^{n+1}_{\edge,i}-\bfu_{\edge,i}^n \right) + \dive(\tilde u^n_i \bfu^n)_\edge
+ (\gradi p)^n_{\edge,i} \\
- \dive\bigl(\mu\, (\gradi \tilde \bfu^{n+1} + (\gradi \tilde \bfu^{n+1})^t)\bigr)_{\edge,i}=0.
\end{multlined}
\\[2ex] \nonumber &
\mbox{{\bf Correction step} -- Solve for $p^{n+1}$ and $\bfu^{n+1}$:}
\\ & \qquad
\text{ For } 1 \leq i \leq d,\ \forall \edge \in \edges,\quad
\frac 1 {\delta t}\,(\bfu^{n+1}_{\edge,i}-\tilde \bfu_{\edge,i}^{n+1}) + (\gradi p^{n+1})_{\edge,i}- (\gradi p^n)_{\edge,i}=0,
\\  & \qquad
\ \forall K \in \mesh, \quad \dive(\bfu^{n+1})_K=0.
\end{flalign}
\end{subequations}
The convection terms are those introduced in this paper, with the density set to $1$ in the mass fluxes.
The term $(\gradi p)^n_{\edge,i}$ stands for the $i$-th component of the discrete pressure gradient built at the face $\edge$, given by:
\begin{equation} \label{eq:def_grad}
\forall \edge \in \edgesint,\ \edge = K|L,\quad (\gradi p)^n_{\edge,i}=\frac{|\edge|}{|D_\edge|}(p_L-p_K)\ \bfn_{K,\edge} \cdot \bfe^{(i)},
\end{equation}
with $\bfe^{(i)}$ the $i$-th vector of the orthonormal basis of $\xR^d$, and $\bfn_{K,\edge}$ the normal vector to the face $\edge$ outward the cell $K$.
We use the usual finite element discretization for the viscous term, which reads:
\begin{equation} \label{eq:def_diff}
-\dive \bigl(\mu (\gradi \tilde \bfu^{n+1} +  (\gradi \tilde \bfu^{n+1})^t) \bigr)_{\edge,i} 
= - \frac 1 {|D_\edge|} \sum_{K \in \mesh}\ \int_K \bigl(\mu (\gradi \tilde \bfu^{n+1} + (\gradi \tilde \bfu^{n+1})^t)) \bigr) : \gradi \bvphi\ei_\edge \dx,
\end{equation}
where $\bvphi\ei_\edge$ stands for the vector-valued Rannacher-Turek finite element shape function associated with the $i$-th component of the velocity and to the face $\edge$ (with the version of the element where the mean value of the shape function over the face is equal to 1) and the operator $:$ is defined by $A:B= \sum_{i,j=1}^d A_{i,j} B_{i,j}$ for two matrices $A$ and $B$ of $\xR^{d\times d}$. 
Finally, the discretization of the divergence of the velocity on the primal mesh reads:
\[
\dive(\bfu^{n+1})_K=\frac{1}{|K|}\sum_{\edge \in \edges(K)}|\edge|\ \bfu_\edge.\bfn_{K,\edge},
\]
which, together with Equation \eqref{eq:def_grad}, ensures the usual discrete $\gradi - \dive$ $L^2$-duality.

\medskip
The initial values of the unknowns are given by an average of the initial data:
\[
\text{For } 1\leq i \leq d,\ \forall \edge \in \edges, \quad \bfu^0_{\edge,i} = \frac 1 {|\edge|} \int_\edge  \bfu_{0,i}(\bfx) \dedge(\bfx),
\]
where $\dedge$ stands for the $d-1$-dimensional Lebesgue measure and $\bfu_0 = (u_{0,1},\dots,u_{0,d})^t$ is the initial condition for the velocity, supposed to be regular enough for the integral over the faces to be defined (for instance, $\bfu_0 \in H^1(\Omega)^d$).
Note that, if $\bfu_0$ is divergence-free, then the discrete divergence of $\bfu^0$ vanishes.

\medskip
For the scheme \eqref{eq:inc_sch}, a control of the kinetic energy (or, in other words, of the predicted velocity in discrete $L^2(H^1)$ norm and in $L^\infty(L^2)$ norm, and of the end of step velocity in $L^\infty(L^2)$ norm) may be derived using Theorem \ref{th:kin}, following known techniques \cite{she-92-err,gue-96-som}.
%
%
\subsubsection{Flow past a cylinder}

We compute here a two-dimensional flow past a cylinder, inspired from a literature benchmark (\textbf{Test Case 2D-2} of \cite{sch-96-opt}).
The computational domain is the same as in \cite{sch-96-opt}, and consists of a rectangular channel with a cylindrical obstacle near the inlet (left) boundary; we refer to \cite[Figure 1]{sch-96-opt} for the exact definition of the domain. 
At the time $t=0$, the fluid is at rest.
The velocity satisfies a homogeneous Dirichlet condition at the top and bottom sides, and the flow leaves freely the domain through the right-hand side. 
It enters the domain on the left boundary with an imposed velocity profile:
\[
u_x(0,y)=4 \ u_m \ \frac{y\,(H-y)}{H^2}, \ u_y(0,y)=0, \ \ \forall y \in [0,H],
\]
where $H=0.41$ is the height of the domain and $u_m = 1.5$.
The robustness of the scheme for strongly convection dominated flow is assessed by changing the Reynolds number value $Re$ chosen in \cite{sch-96-opt} ($Re=100$) to a larger value, namely $Re=500$ (with $Re = (\rho \overline{u} D)/\mu$ where $\overline{u} = 2 u_x(0, H/2)/3 = 1$).
To this purpose, the density is fixed at $\rho=1$ and the viscosity is equal to $\mu = 0.0002$ .
The computations are first performed using a very coarse grid with 4033 cells (see Figure \ref{fig:cylinder_mesh}), representative of what is often encountered in complex 3D industrial simulations. 
The time step is $\delta t = 0.002$.

\begin{figure} \centering
\includegraphics[width=0.95\textwidth]{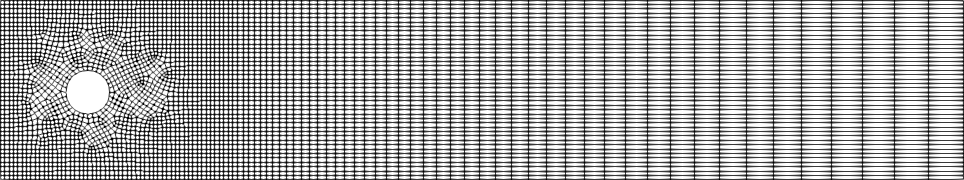}
\caption{Flow past a cylinder - Coarse mesh.}
\label{fig:cylinder_mesh}
\end{figure}

\medskip
The results are plotted in Figure \ref{fig:NSI_FPAC_R500_coarse}, together with the results obtained with (implicit-in-time) upwind and centered convection operators.
For all the schemes, the flow is unsteady.
As expected, the upwind operator introduces a large numerical diffusion; this is not the case for the other operators. 
The centered scheme yields an unrealistic large recirculation zone.
The computation is then run on refined grids (12913 cells and 43009 cells), with an adjusted time step ($\delta t = 0.000625$ and $\delta t = 0.000187$ respectively).
On these grids, the centered scheme seems to yield results more in line with the ones  obtained with the upwind and MUSCL discretizations, as can be seen in Figure \ref{fig:NSI_FPAC_R500_fine}.
This confirms that, on the coarsest grid, the solution obtained with the MUSCL scheme is much more accurate than with the other discretizations.

\begin{figure}
    \centering
    \includegraphics[width=0.95\textwidth]{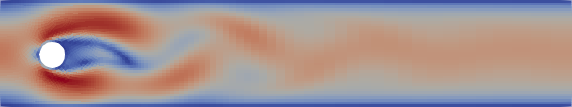} \\[1ex]
    \includegraphics[width=0.95\textwidth]{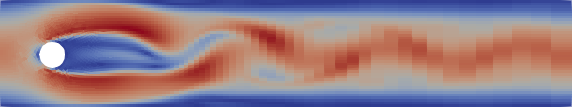} \\[1ex]
    \includegraphics[width=0.95\textwidth]{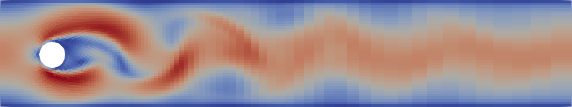}
    \caption{Flow past a cylinder - Magnitude of the velocity at time $t=5$ (coarse mesh). From top to bottom: upwind scheme, centered scheme, MUSCL scheme.}
    \label{fig:NSI_FPAC_R500_coarse}
\end{figure}

\begin{figure}
    \centering
    \includegraphics[width=0.95\textwidth]{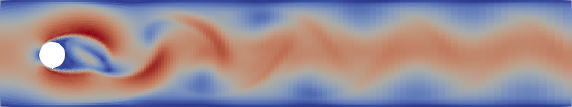} 
    \caption{Flow past a cylinder - Magnitude of the velocity at time $t=5$ for the most refined mesh, with the centered scheme.}
    \label{fig:NSI_FPAC_R500_fine}
\end{figure}

\medskip
To further assess the quality of the different schemes, we turn to the other outputs studied in of \cite[Test Case 2D-2]{sch-96-opt} (even though our aim is not to compare our results with those of \cite{sch-96-opt}, since the viscosity is different).
The main quantities of interest are the pressure difference $\Delta P$ between the front and end points of the cylinder (\ie\ the points $(0.15, 0.20)$ and $(0.25, 0.20)$ respectively), the Strouhal number, the maximum drag coefficient, and the maximal and minimal lift coefficients (see \cite{sch-96-opt} for a definition).
They are gathered in Tables \ref{tab:NSI_FPAC_R500_data_upwind}, \ref{tab:NSI_FPAC_R500_data_MUSCL} and \ref{tab:NSI_FPAC_R500_data_centered}, and the computed drag and lift coefficients are plotted as a function of time on Figures \ref{fig:NSI_FPAC_R500_DragLiftCoeff} and \ref{fig:NSI_FPAC_R500_DP}.
With the centered scheme, the computed flow does not seem to tend to a periodic flow, contrary to what happens with the MUSCL and upwind schemes.
Even if the convergence is far from being reached with the (intentionally) very coarse mesh used in this study, the MUSCL scheme seems able to capture at least the order of magnitude of the recorded quantities (see in particular the lift coefficient in Table \ref{tab:NSI_FPAC_R500_data_MUSCL}).

\medskip
To sum up, the conclusion of this test is that, for the simulation of such convection-dominated flow, the MUSCL scheme seems to be a better alternative than the upwind and centered schemes on coarse meshes (representative of industrial simulations): indeed the upwind and centered schemes respectively suffer from an over-diffusion and a lack of stability.

\begin{table} 
    \centering 
    \begin{tabular}{c|c|c|c} 
        Number of cells & 4033 & 12913 & 43009 \\ \hline
        Min. mesh area & $3.43\times10^{-5}$ & $1.11 \times 10^{-5}$ & $2.55 \times 10^{-6}$ \\ \hline
        $\Delta P$ & 2.29620 & 2.37170 & 2.53970\\
        Strouhal number & 0.22257 & 0.25077 & 0.27523\\
        Max. drag coeff. & 3.23134 & 3.01118 & 2.81112\\
        Max. lift coeff. & 0.51332 & 1.11934 & 1.50993\\
        Min. lift coeff. & -0.50646 & -0.95858 & -1.44269 \\
    \end{tabular} 
    \caption{Flow past a cylinder - Quantitative results for the upwind scheme.} 
    \label{tab:NSI_FPAC_R500_data_upwind} 
\end{table}

\begin{table} 
    \centering 
    \begin{tabular}{c|c|c|c} 
        Number of cells & 4033 & 12913 & 43009 \\ \hline
        Min. mesh area & $3.43\times10^{-5}$ & $1.11 \times 10^{-5}$ & $2.55 \times 10^{-6}$ \\ \hline
        $\Delta P$ & 2.38970 & 2.52830 & 2.76460\\
        Strouhal number & 0.25112 & 0.27822 & 0.29464 \\
        Max. drag coeff. & 3.38864 & 3.19996 & 2.99350\\
        Max. lift coeff. & 0.96980 & 1.75976 & 2.21766 \\
        Min. lift coeff. & -0.98392 & -1.43585 & -1.91979 \\
    \end{tabular} 
    \caption{Flow past a cylinder - Quantitative results for the MUSCL scheme.} 
    \label{tab:NSI_FPAC_R500_data_MUSCL} 
\end{table}

\begin{table} 
    \centering 
    \begin{tabular}{c|c|c|c} 
        Number of cells & 4033 & 12913 & 43009 \\ \hline
        Min. mesh area & $3.43\times10^{-5}$ & $1.11 \times 10^{-5}$ & $2.55 \times 10^{-6}$ \\ \hline
        $\Delta P$ & - & 2.34780 & 3.07140\\
        Strouhal number & - & 0.26484 & 0.30252 \\
        Max. drag coeff. & 3.20972 & 3.42892 & 3.51592\\
        Max. lift coeff. & 0.15683 & 1.36092 & 2.50430\\
        Min. lift coeff. & -0.14332 & -1.23030 & -2.42746\\
    \end{tabular} 
    \caption{Flow past a cylinder - Quantitative results for the centered scheme.} 
    \label{tab:NSI_FPAC_R500_data_centered} 
\end{table}

\begin{figure}
    \centering
    \begin{tabular}{cc}
        \includegraphics[trim=0 0 0 5, clip, width=0.47\textwidth]{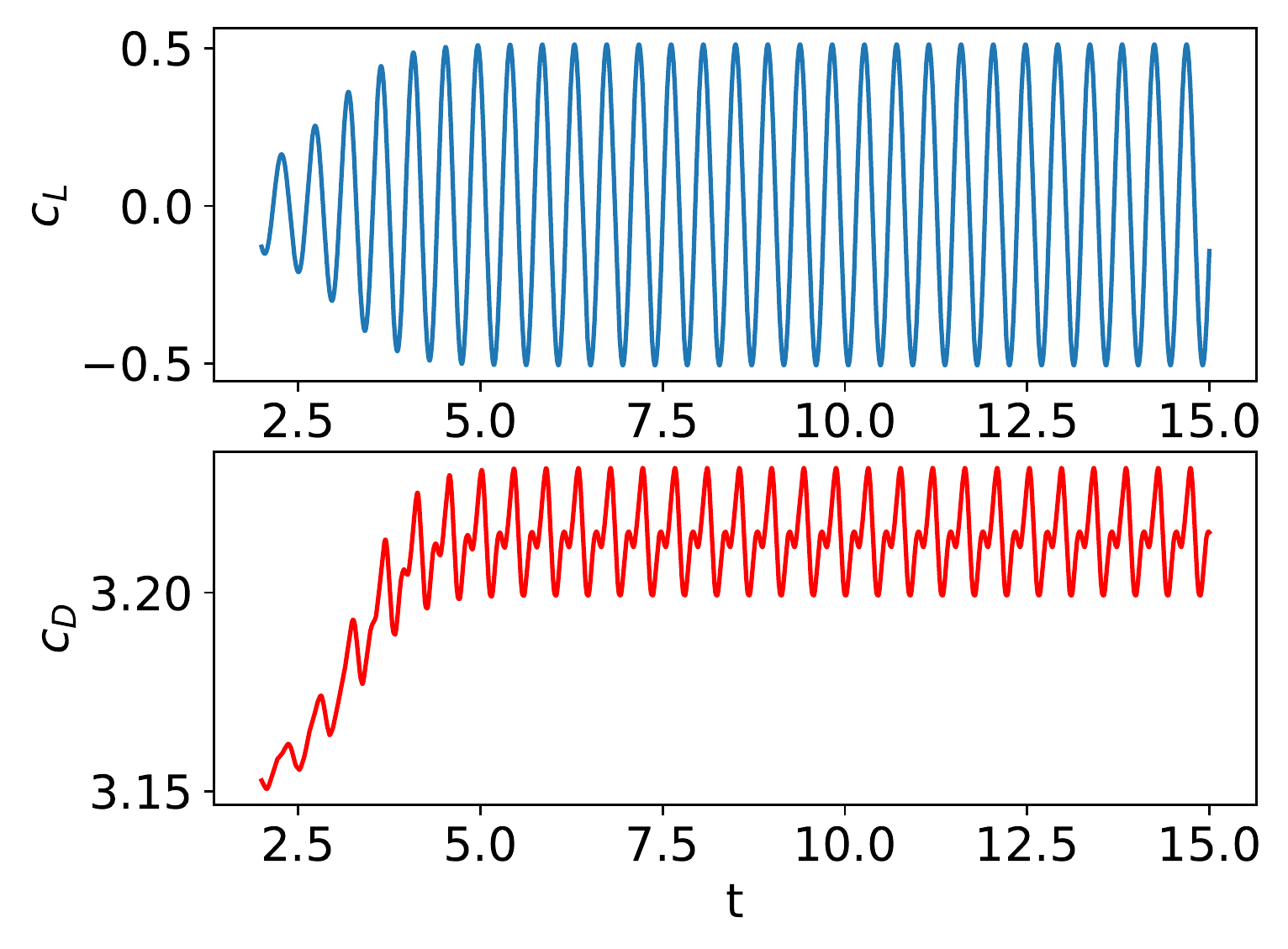} &
        \includegraphics[trim=0 0 0 5, clip, width=0.455\textwidth]{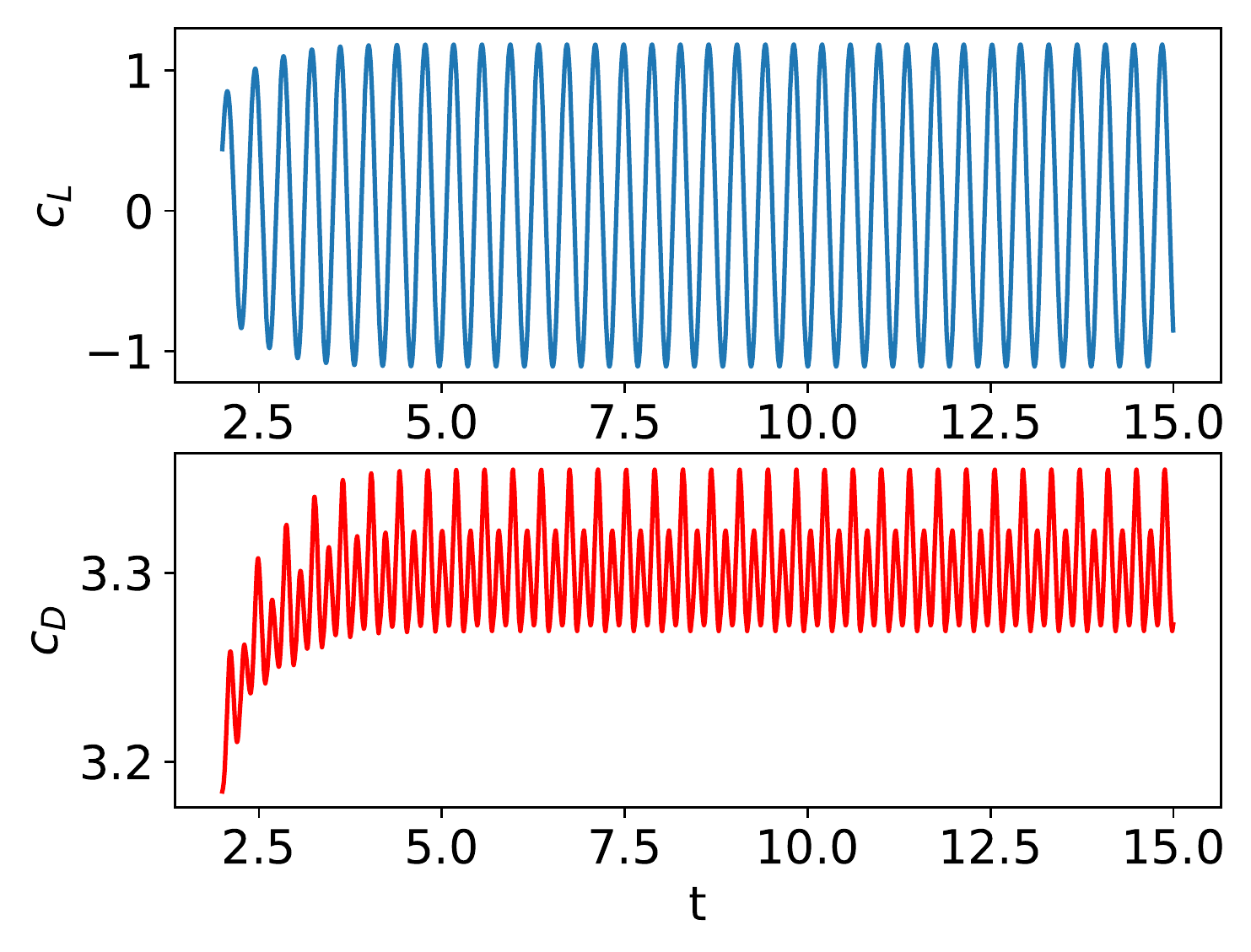}
    \end{tabular}
    \includegraphics[trim=0 0 0 5, clip, width=0.47\textwidth]{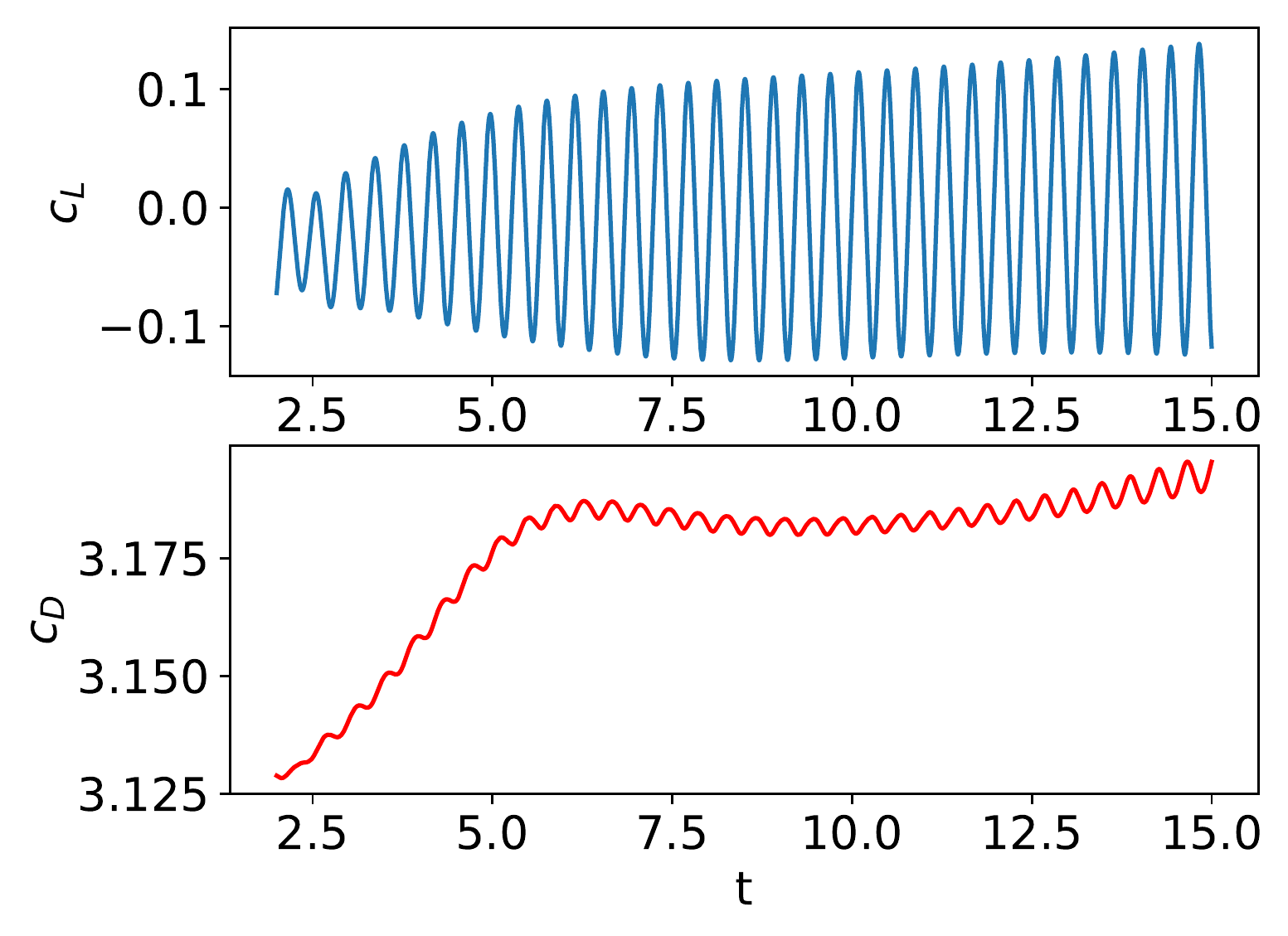}    
    \caption{Flow past a cylinder - Drag and lift coefficient as a function of time (coarse mesh). Top-left: upwind scheme. Top-right: MUSCL scheme. Bottom: centered scheme.}
    \label{fig:NSI_FPAC_R500_DragLiftCoeff}
\end{figure}

\begin{figure}
    \centering
    \begin{tabular}{cc}
        \includegraphics[trim=0 0 0 5, clip, width=0.47\textwidth]{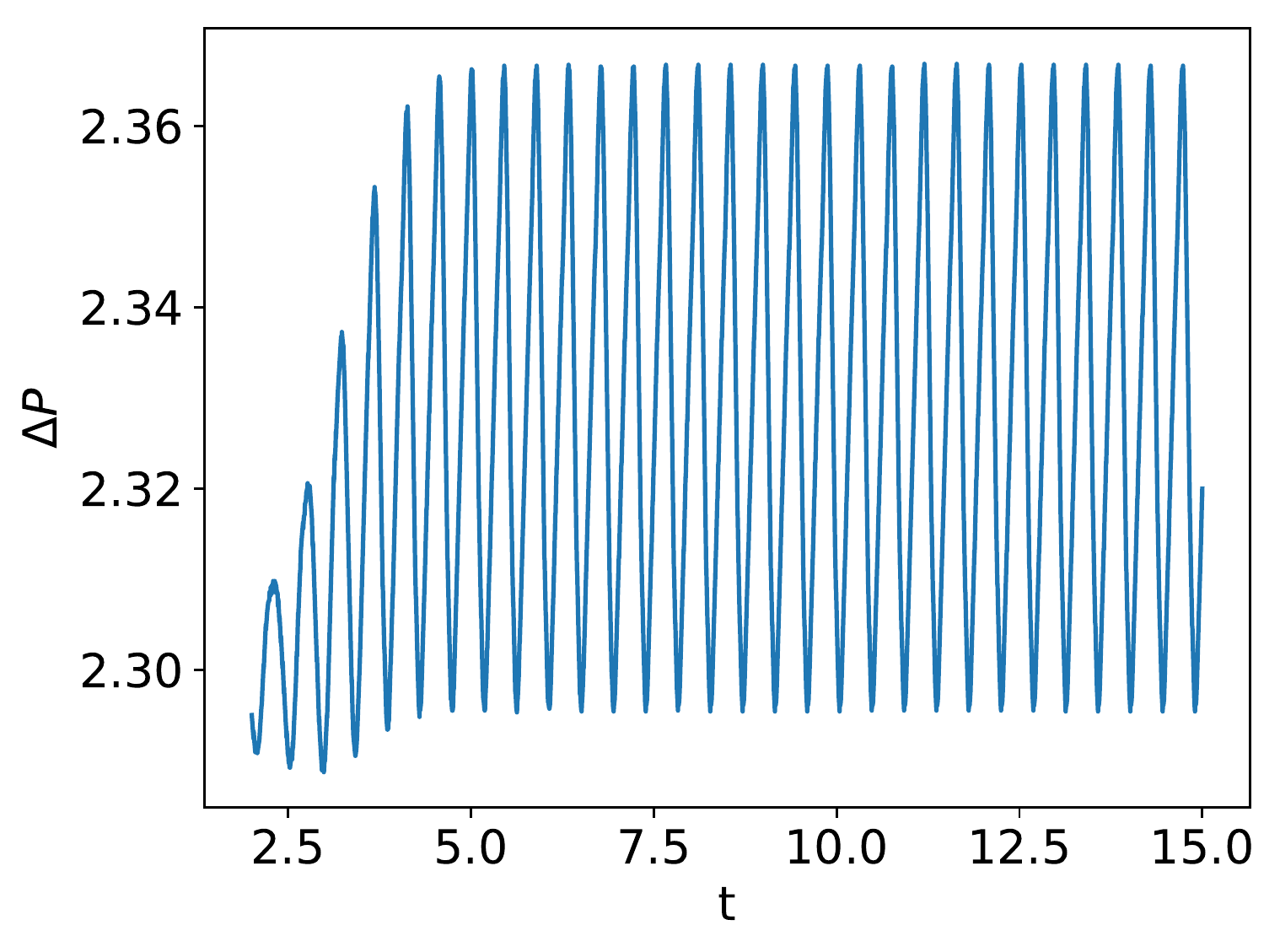} &
        \includegraphics[trim=0 0 0 5, clip, width=0.468\textwidth]{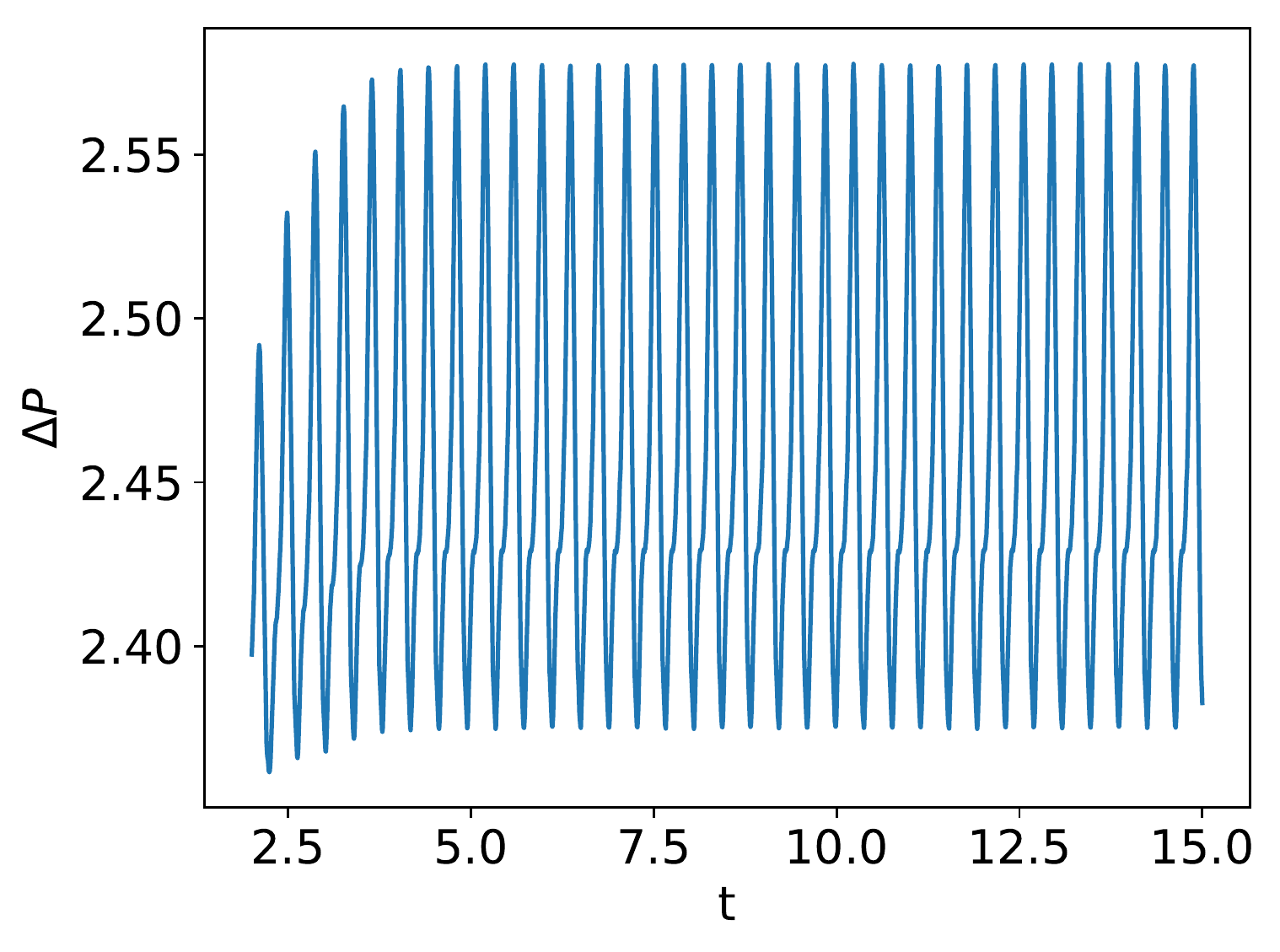}\\
    \end{tabular}
    \includegraphics[trim=0 0 0 5, clip, width=0.47\textwidth]{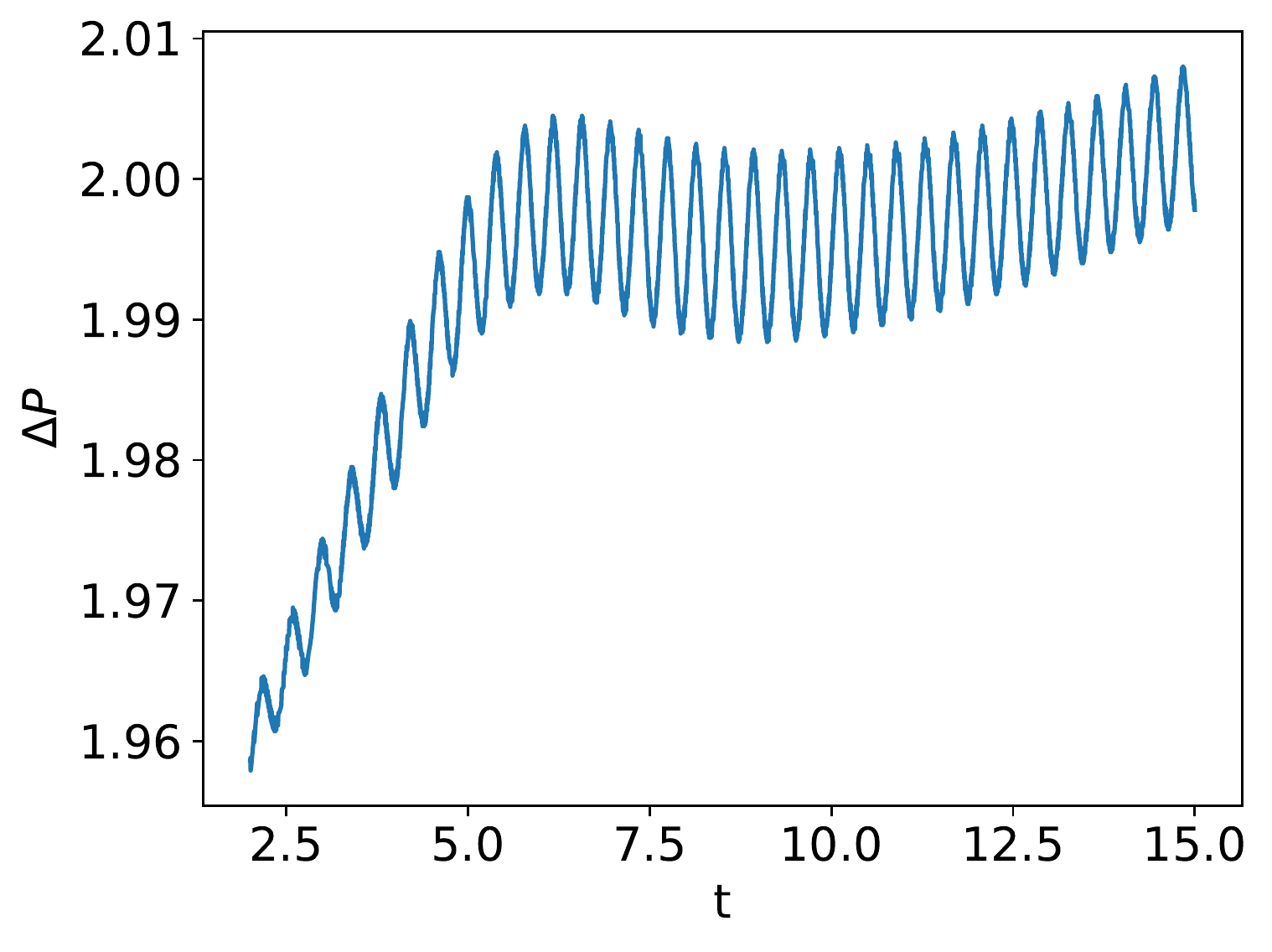}
\caption{Flow past a cylinder - $\Delta P$ as a function of time on the coarsest mesh. Top-left: upwind scheme. Top-right: MUSCL scheme. Bottom: centered scheme.}
    \label{fig:NSI_FPAC_R500_DP}
\end{figure}

%
%
\subsubsection{Lid-driven cavity} \label{subsec:LidDrivenCavity}

We now turn to the well-known $2D$ lid-driven cavity flow test case, which is a classical test problem for the validation of Navier-Stokes schemes. 
It consists in the study of a flow in the square $[0,1] \times [0,1]$. 
Homogeneous Dirichlet boundary conditions are prescribed to the velocity on the left, right, and bottom sides. 
The velocity is tangential to the top side, and its norm is set to 1, \ie:
\begin{align}
u_x(x,1)=1, \ \ u_y(x,1)=0 \ \ \forall x \in [0,1].
\end{align}
The value of the viscosity is chosen to obtain a Reynolds number $Re$ equal to $5000$, with $Re = \rho \overline{u} D/\mu$ with $\rho = 1$, $D = 1$, $\overline{u}=1$ and $\mu = 0.0002$.
With this value of the Reynolds number, the problem is known to converge to a steady state.
To reach this state, we let the computation run up to a final time of $T=200$ seconds (with a time step of $\delta t = 0.0025$), which is enough to obtain a relative difference between the velocity at two successive time steps in the range of $10^{-6}$.
This test is classical, and numerous computations are available (see e.g. \cite{ghi-82-hig,bot-98-ben,bru-06-2d}); the reference used in this paper is a converged-in-space computation that can be found in \cite{bru-06-2d}. 

\medskip
We perform two computations, with uniform $128 \times 128$ and $256 \times 256$ grids, respectively. 
The amplitude of the variations of the streamline function and the location of the center of the primary and bottom right secondary vortices obtained with the upwind, centered, and MUSCL schemes are reported in Table \ref{tab:AmpStreamline} and \ref{tab:LocVortexR5000} respectively. 
The location of the center of the vortices is defined as the point where the streamline function reaches an extremum: the primary vortex corresponds to the minimum of the streamline function, while the secondary vortex corresponds to a maximum.
On both grids, the amplitude of the streamline function variations seems to be overvalued with the upwind discretization, and undervalued with the centered one, while the MUSCL discretization yields a better agreement with the reference value.
Concerning the location of the vortices, all methods seem to give close outcomes, and the results are in reasonable agreement with the reference ones; with the upwind discretization, both vortices seem to be however slightly shifted upward compared to the higher-order methods.
Slight differences may also be observed on the shape of the vortices (Figures \ref{fig:PrimVortR5000} and \ref{fig:SecVortR5000} for the primary and secondary vortex, respectively).

\begin{table} 
    \centering 
    Ref \cite{bru-06-2d} on refined mesh: 0.1249994 \\ 
    \begin{tabular}{c|c|c} 
        Scheme & Grid $128\times 128$ & Grid $256\times 256$\\ \hline
        Upwind & 0.1539811 & 0.1551107\\
        Centered & 0.0877255 & 0.1081858\\
        MUSCL & 0.1066407 & 0.1155036\\
    \end{tabular} 
    \caption{Lid-driven cavity - Amplitude of the streamline function variations ($\psi_{max}-\psi_{min}$).} 
    \label{tab:AmpStreamline} 
\end{table}
\begin{table} 
    \centering 
    \begin{tabular}{c|c|c|c|c|c} 
        Scheme & Grid & $x_{pv}$ & $y_{pv}$ & $x_{sv}$ & $y_{sv}$\\ \hline
        Ref \cite{bru-06-2d} & $1024 \times 1024$ & 0.51465 & 0.53516 & 0.80566 & 0.073242\\ \hline
        Upwind & $128 \times 128$ & 0.516 & 0.547 & 0.820 & 0.086\\
        Centered & $128 \times 128$ & 0.516 & 0.539 & 0.820 & 0.078\\
        MUSCL & $128 \times 128$ & 0.516 & 0.539 & 0.812 & 0.078\\ \hline
        Upwind & $256 \times 256$ & 0.516 & 0.543 & 0.812 & 0.078\\
        Centered & $256 \times 256$ & 0.512 & 0.539 & 0.812 & 0.074\\
        MUSCL & $256 \times 256$ & 0.512 & 0.535 & 0.809 & 0.074\\
    \end{tabular} 
    \caption{Lid-driven cavity - Location of the primary vortex $(x_{pv},y_{pv})$ and the lower right secondary vortex $(x_{sv},y_{sv})$.} 
    \label{tab:LocVortexR5000} 
\end{table}

\begin{figure}
    \centering
    \fbox{\includegraphics[clip=true, trim= 130 135 120 110, width=0.3\textwidth]{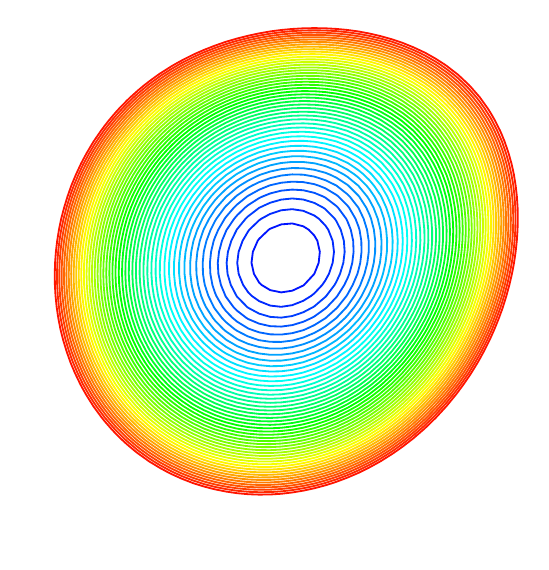}}
    \fbox{\includegraphics[clip=true, trim= 130 135 120 110, width=0.3\textwidth]{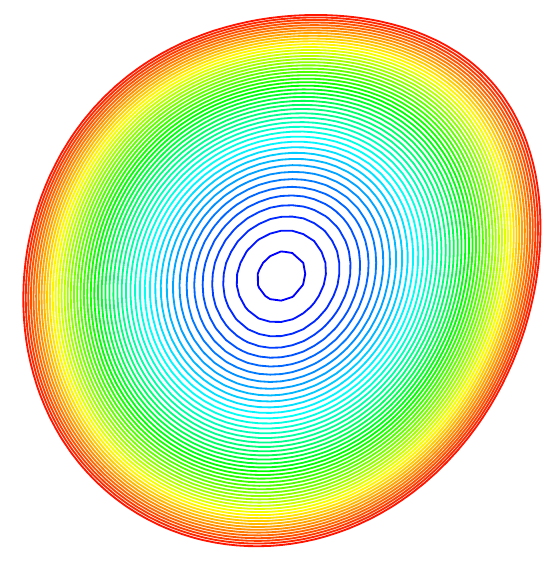}}
    \fbox{\includegraphics[clip=true, trim= 130 135 120 110, width=0.3\textwidth]{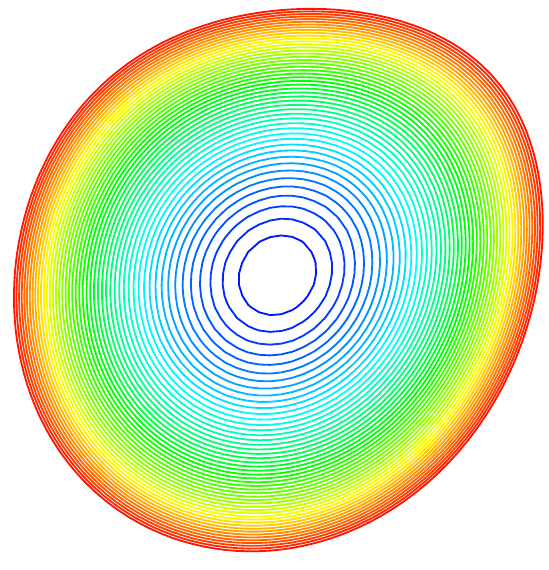}}
    \caption{Lid-driven cavity - Primary vortex. From left to right: results with the upwind, centered, and MUSCL scheme.}
    \label{fig:PrimVortR5000}
\end{figure}

\begin{figure}
    \centering
    \fbox{\includegraphics[clip=true, trim= 280 3 2 280, width=0.3\textwidth]{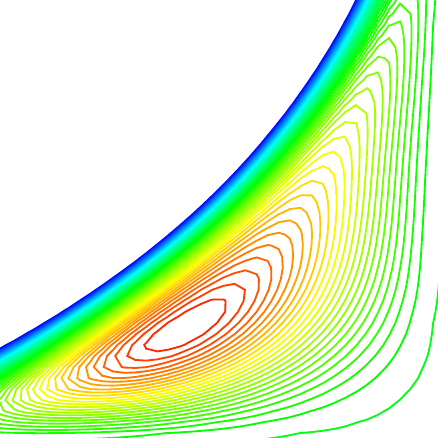}}
    \fbox{\includegraphics[clip=true, trim= 280 3 2 280, width=0.3\textwidth]{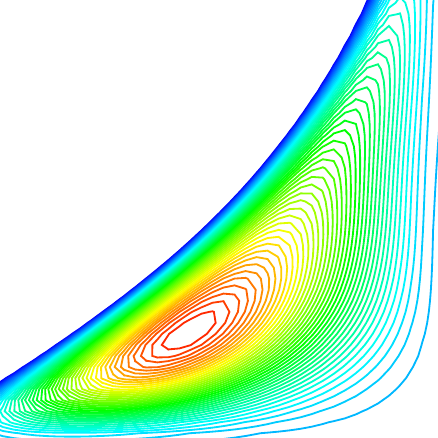}}
    \fbox{\includegraphics[clip=true, trim= 280 3 2 280, width=0.3\textwidth]{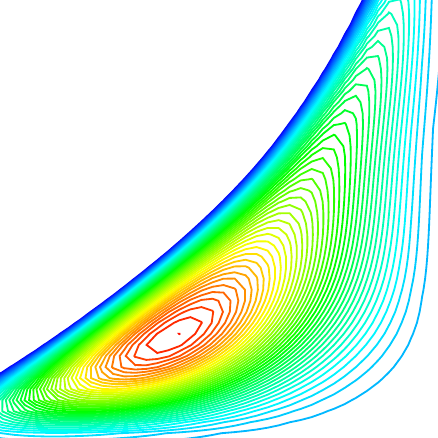}}
    \caption{Lid-driven cavity - Secondary vortex. From left to right: results with the upwind, centered, and MUSCL scheme.}
    \label{fig:SecVortR5000}
\end{figure}

%
%
\subsubsection{Backward-facing step}\label{subsec:BackwardFacingStep}

We finally address the so-called backward-facing step problem, introduced in \cite{arm-83-exp} and also addressed in \cite{chi-99-num, ans-11-sta}. 
The domain is rectangular, its length is set to $L=20$ and its height to $H=1.9423$.
The flow enters the domain $\Omega$ through its left boundary and a step of height $h=0.9423$ is considered at the left of the computational domain, outside and adjacent to $\Omega$; the step is thus only modelled by the boundary conditions, and a parabolic velocity profile above it is assumed.
Consequently, Dirichlet conditions are prescribed at the left, top, and bottom boundaries, the velocity being set to zero except in the inlet part of the boundary, \ie\ the part of the left side located above $h=0.9423$; homogeneous Neumann conditions are imposed on the right side of the domain.
The fluid density is $\rho=1$, the viscosity is $\mu=0.001$ and the peak velocity in the inlet boundary is equal to $1$, which corresponds to a Reynolds number $Re=1000$ (with respect to this maximum inlet velocity).
The mesh used here is a rather coarse $250\times50$ grid, and the time step is $\delta t = 0.01$.

\medskip
The streamlines vortices at time $t=20$ are plotted on Figure \ref{fig:BFS_R5000_50x250}. 
As expected, the upwind scheme is the most diffusive: all the vortices are damped, with a quasi-complete disappearance of the one located at the right of the reattachment point. 
Both centered and MUSCL schemes yield qualitatively similar results.

\begin{figure}
    \centering
    \includegraphics[width=0.9\textwidth]{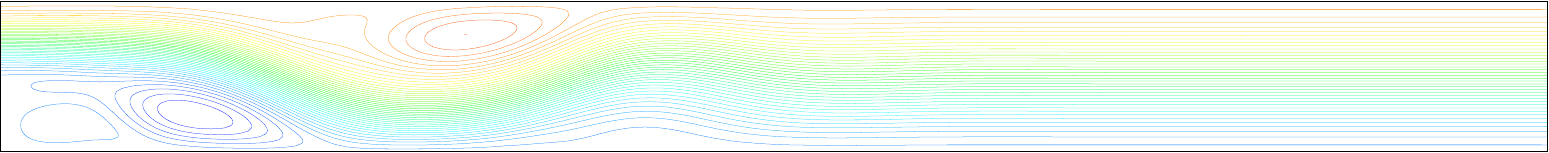}
    \includegraphics[width=0.9\textwidth]{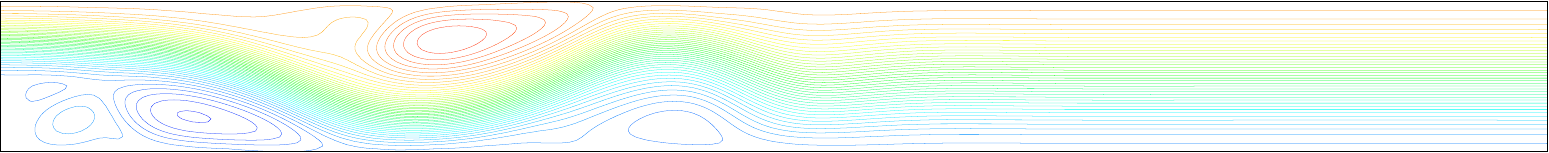}
    \includegraphics[width=0.9\textwidth]{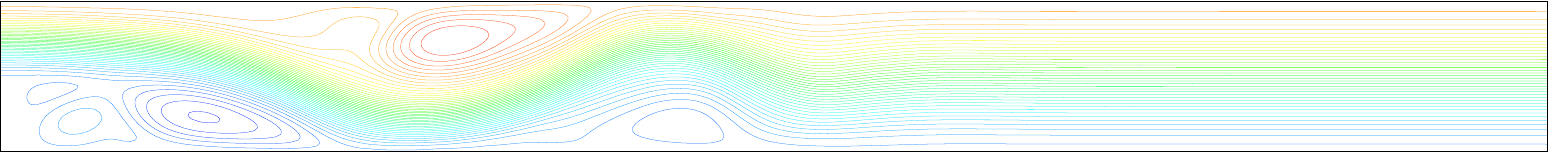}
    \caption{Backward-facing step - Streamlines at time $t=20$. From top to bottom: upwind, centered, and MUSCL schemes.}
    \label{fig:BFS_R5000_50x250}
\end{figure}
%
%
\subsection{Compressible barotropic Navier-Stokes equations}

We now show applications to the barotropic (isentropic) compressible Navier-Stokes equations:
\begin{subequations} \label{eq:ns_bar}
\begin{align} 
\label{eq:mass_balance} &
\partial_t \rho + \dive(\rho\bfu)  = 0, \\
\label{eq:mom_balance} &
\partial_t (\rho u_i) + \dive(\rho u_i \bfu) + \partial_i p - \dive(\mu(\gradi \bfu+\gradi \bfu^t))_i = 0, \qquad 1\leq i \leq d,
\\ &
p  = a \rho^\gamma,\quad a >0,\ \gamma \geq 1.
\end{align}    
\end{subequations}
%
%
\subsubsection{The scheme}

A first-order forward Euler time-discretization of System \eqref{eq:ns_bar} reads:
\begin{subequations} \label{eq:ns_bar_scheme}
\begin{flalign} \label{eq:scheme_mass} &
\forall K \in \mesh,\quad \dfrac{1}{\delta t}(\rho^{n+1}_K-\rho^n_K) + \dive( \rho^n \bfu^n)_K=0,
\\ \nonumber &
\text{For } 1\leq i\leq d,\ \forall \edge \in \edges,
\\ \label{eq:scheme_mom} &
\phantom{\forall K \in \mesh, \quad}
\dfrac{1}{\delta t}(\rho^{n+1}_{D_\edge} u^{n+1}_{\edge,i}-\rho^n_{D_\edge} u^n_{\edge,i}) + \dive(\rho^n u_i^n\bfu^n)_\edge
+ (\gradi p)^n_{\edge,i} - \dive(\mu (\gradi \bfu^n + \gradi (\bfu^n)^t))_{\edge,i}=0,
\\ \label{eq:scheme_eos} &
\forall K \in \mesh, p^{n+1}_K=a\ (\rho^{n+1}_K)^\gamma.
\end{flalign} 
\end{subequations}
The momentum discrete convection terms are described in the previous section, the mass fluxes in \eqref{eq:scheme_mass} are approximated with a MUSCL scheme \cite{pia-13-for} and the other terms of the system are the same as those used for the incompressible Navier-Stokes equations. 
The first-order scheme \eqref{eq:ns_bar_scheme} is then extended to second-order in time using a second order Runge-Kutta scheme (or Heun scheme), which reads, with $\boldsymbol{W}^n=(\rho^n,u^n,p^n)$ the unknowns at step $n$ and $S(\boldsymbol W)$ the new unknowns resulting from the application of \eqref{eq:ns_bar_scheme} to the unknown vector $\boldsymbol W$:
\begin{equation}\label{eq:heun_algo}
\boldsymbol{W}^{n+\frac{1}{3}}=S(\boldsymbol{W}^n),\quad
\boldsymbol{W}^{n+\frac{2}{3}}=S(\boldsymbol{W}^{n+\frac{1}{3}}), \quad
\boldsymbol{W}^{n+1}=\frac{1}{2}(\boldsymbol{W}^n+\boldsymbol{W}^{n+\frac{2}{3}}).
\end{equation}
Unless specified, the Heun scheme is used in the following numerical tests.
%
%
\subsubsection{Travelling vortex}

Here we assess the convergence rate of the proposed scheme on a test case built for this purpose. 
We first derive an analytical solution of the steady isentropic Euler equations, consisting in a standing vortex; then this solution is made unsteady by adding a constant velocity translation. 
A solution to the Navier-Stokes equations is finally derived by compensating the viscous forces (that appear on the left-hand side of equation \eqref{eq:mom_balance}) with a source term.
We refer to \cite{gal-20-sec} for the exact expression of this solution.
We take $a=9.81/2$ and $\gamma = 2$, so System \eqref{eq:ns_bar} is identical to the (viscous) shallow-water equations without bathymetry.
The viscosity $\mu$ is chosen so that the Reynolds number is equal to $50$.
The domain is the square $\Omega=[-1.2,2]^2$ and the computation is run on the time interval $[0,0.8]$.

\medskip
The meshes are uniform $n \times n$ grids, starting from a $32 \times 32$ one and then doubling the number of control volumes in each direction until we reach a $256 \times 256$ mesh.
The time step is set to $0.03125\times h$, with $h=3.2/n$, which yields a $\cfl$ number with respect to the celerity of the fastest wave close to $0.07$ (the material velocity and the speed of sound are in the range of $1.45$ and $0.76$ respectively), this low value of the $\cfl$ number being imposed by the explicit discretization of the diffusion term (the constraint stems from the necessity to be stable up to the finest mesh).

\medskip
In Figure \ref{fig:err_NS}, we draw the $L^1$ norm of the numerical error for the velocity and the density as a function of the mesh step.
This error is obtained by taking the difference between the computed velocity or density at the final time and the piecewise constant function defined by taking the value of the continuous solution at the diamond or primal cell center.
The measured orders of convergence are close to $1.8$ and $2$ for the velocity and the density respectively, which corresponds to the properties which are expected for the scheme.
In this respect, note that we work here with uniform meshes, so the slope limitation $\xi^+=1$ does not prevent to choose the face value given by a second order interpolation; with non-uniform meshes, a limitation of the order of convergence would probably be observed (unless relaxing the limitation to $\xi^+=2$, which is possible).

\begin{figure}
    \centering
    \includegraphics[width=0.47\textwidth]{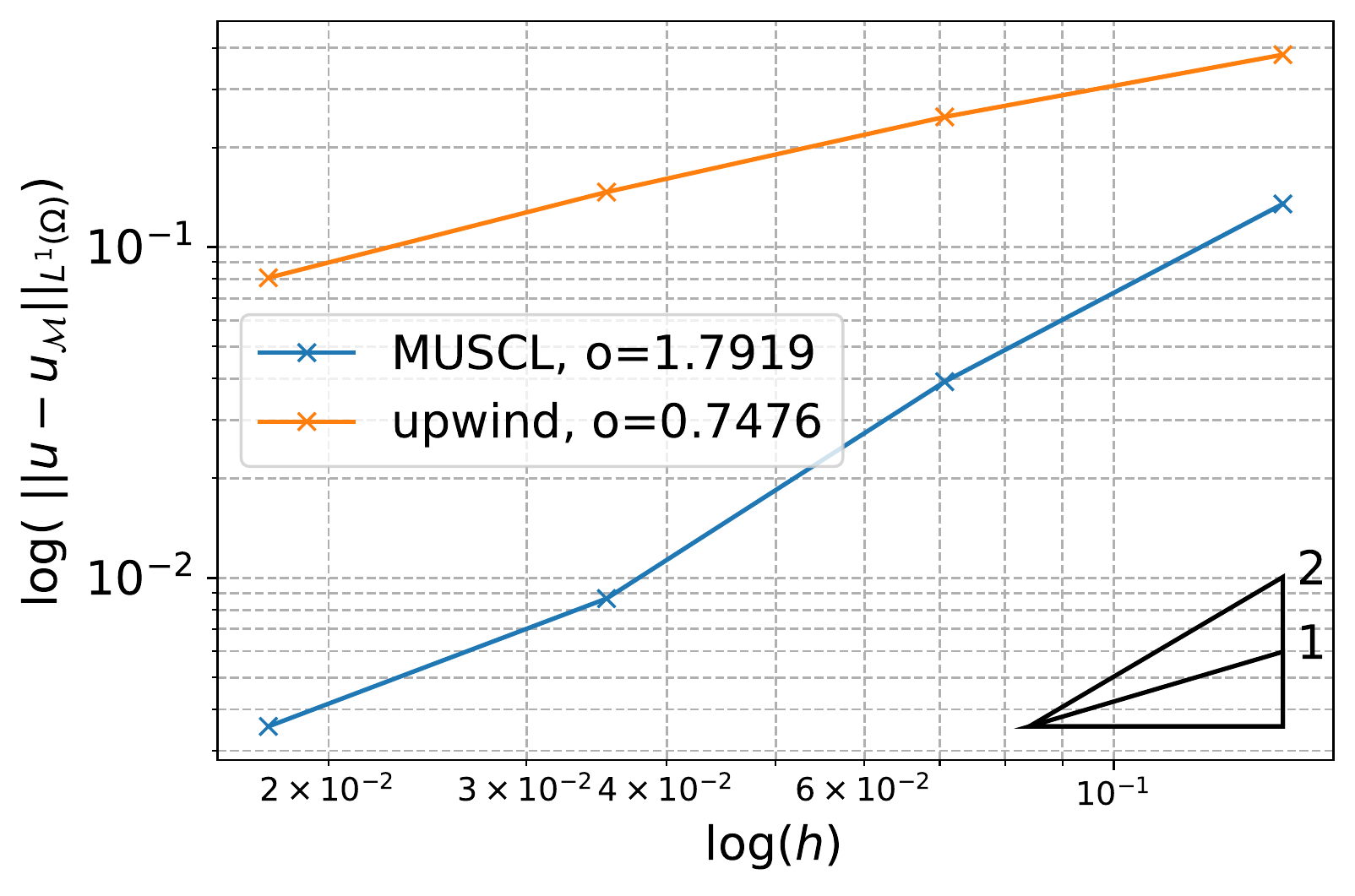}
    \includegraphics[width=0.47\textwidth]{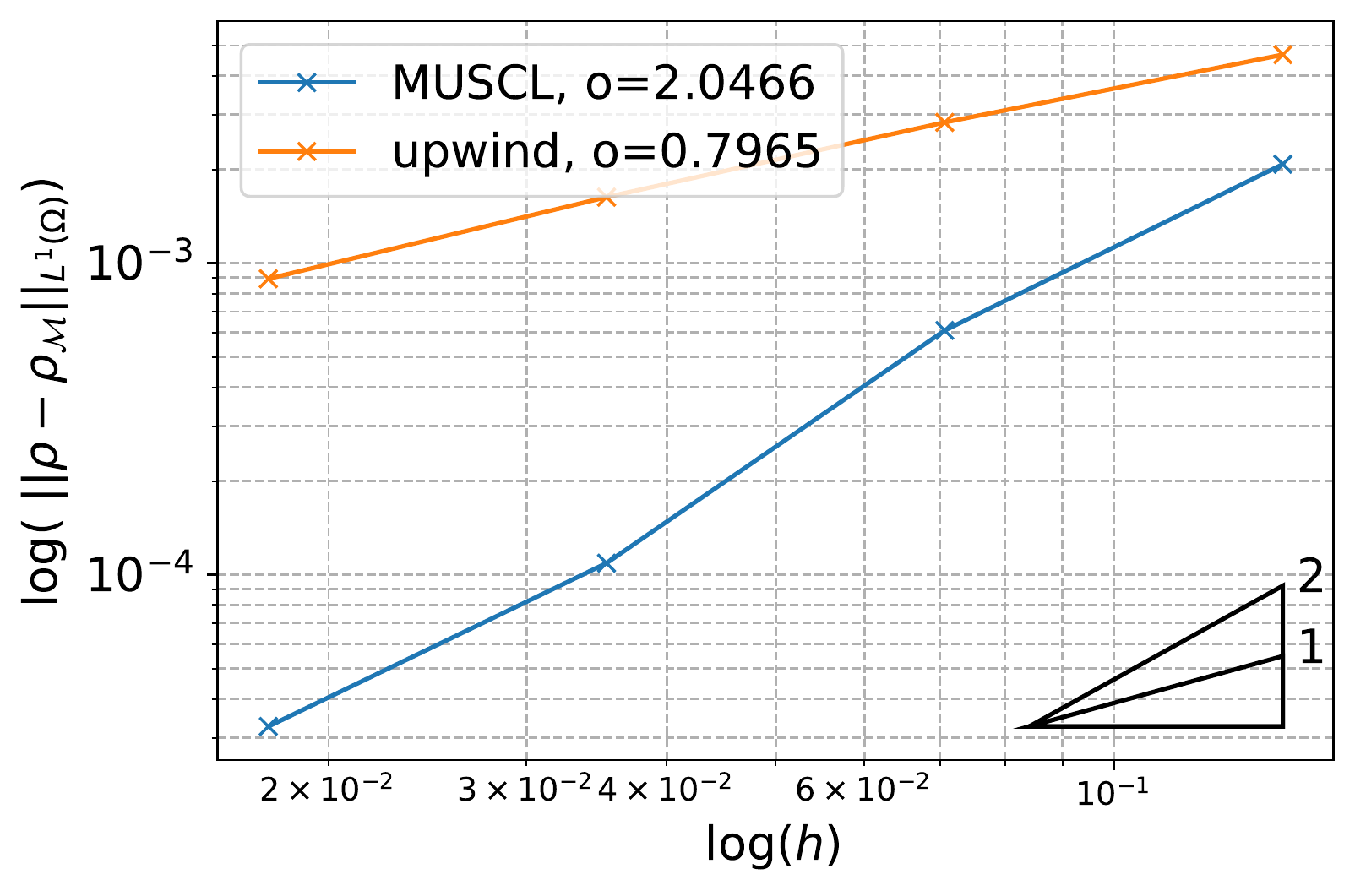}
    \caption{Barotropic travelling vortex - $L^1$ norm error for the MUSCL scheme and the upwind scheme for the velocity and the density. Here $h'=\max_{K \in \mesh}\text{diam}(K)=\sqrt{2}h$.}
    \label{fig:err_NS}
\end{figure}
%
%
\subsubsection{Flow past a cylinder} \label{sec:baro_cyl}

We now turn to a two-dimensional problem, namely an adaptation to the compressible case of the flow past a cylinder problem already studied in the incompressible context.
The geometry of the domain is thus once again given in \cite[Figure 1]{sch-96-opt}, up to the fact that the left part of the domain is lengthened, to keep the reflected shocks travelling to the left inside the computational domain, up to the final time (see below).
Here the viscosity is set to $\mu=0$, and we keep $a=9.81/2$ and $\gamma=2$, to recover once again the shallow-water equations.
We take as initial data a given homogeneous state $u_0=0$ and $\rho_0=0.2$ over the whole domain, and prescribe the velocity and the density at the left boundary in such a way to generate a shock travelling from the left to the right. 
This shock is supposed to satisfy $M=2$, where $M=\omega/c$ is the so-called Mach number associated with the shock, \ie\ the ratio of the speed of the shock wave $\omega$ to the speed of sound $c$ in the initial medium (or, equivalently, in the right state of the shock), given by $c=\sqrt{2 a \rho_0}$ (so $w=2\ \sqrt{2 a \rho_0}$). 
Using the Rankine-Hugoniot jump relations, we obtain the inlet conditions at the left boundary $x=0$:
\begin{align}
    &\forall y \in [0,H], \quad u_x(0,y)=\omega\left(1-\frac{2}{\sqrt{1+8M^2}-1}\right), \quad u_y(0,y)=0, \\
    &\forall y \in [0,H] \quad \rho(0,y)=0.1\left( \frac{\sqrt{1+8M^2}-1}{2} \right).
\end{align}
Impermeability and perfect slip boundary conditions are prescribed on the other boundaries except on the right side of the domain; here, we let the flow leave the domain "freely"; this is numerically obtained by using a first-order upwind approximation for the convection fluxes (the computed $y$-component of the velocity is positive at any time and all along the boundary) and supposing that the pressure gradient vanishes.

\medskip
The computation is performed on a mesh consisting of 106897 control volumes (which yields a minimum area of the cells equal to $4.44 \times 10^{-7}$), and the time step is equal to $\delta t = 4.10^{-6}$.
For the MUSCL scheme, we observe spurious wiggles which need to be damped with an artificial diffusion term $T_{dif}$ of the form:
\begin{equation}\label{eq:stab_mom}
T_{dif} =\sum_{\substack{\edged \in \edgesd(D_\edge),\\ \edged=D_\edge|D_{\edge'}}}\nu_{\edged}^{n+1}(u^n_{\edge,i}-u^n_{\edge',i}),
\end{equation}
which is added to the left-hand side of Equation \eqref{eq:scheme_mom}. 
The artificial viscosity parameter is constant and equal to:
\[
\nu= \frac 1 {10}\ \sqrt{2 a \rho_\ell}/\ h,
\]
with $h$ the space step and $\rho_\ell=0.65$ the maximal density obtained after reflection of the shock wave on the cylinder.
This yields a viscosity significantly lower than the numerical viscosity which would be introduced by a Godunov scheme (note that $\sqrt{2 a \rho_\ell}$ is the maximum celerity of the sound wave).
The necessity of such a stabilization was already observed in \cite{her-18-cons}; it is probably due to the fact that the scheme numerical diffusion depends linearly (at most, \ie\ with the upwind scheme) on the material velocity (and not the waves celerity, as would be the case for a Godunov scheme), which here moreover vanishes in the right state of the shock.
In our numerical experiments, no reasonable diffusive parameter was sufficient to ensure the stability of the centered scheme, so no result with this discretization is presented here.

\begin{figure}
\centering
\includegraphics[width=0.95\textwidth, viewport = 0cm 0cm 50cm 6cm, clip]{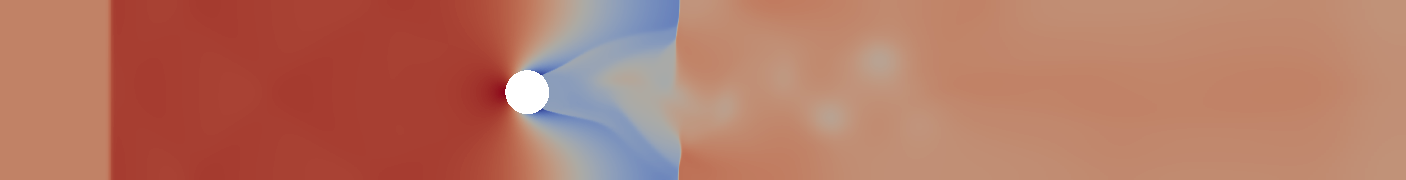} \\[2ex]
\includegraphics[width=0.95\textwidth, viewport = 0cm 0cm 50cm 6cm, clip]{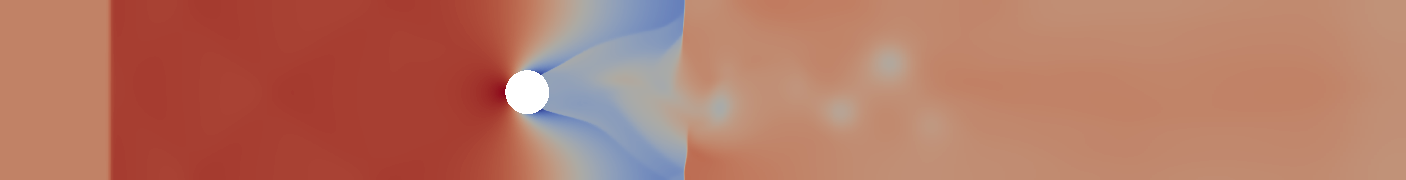}
\caption{Barotropic flow past a cylinder - density at time $t=1$. Top: upwind scheme. Bottom: MUSCL scheme.}
\label{fig:swe}
\end{figure}

\medskip
The computations show a reflection of the shock on the obstacle, which generates a reflected shock (first curved then tending to a plane wave) travelling to the left, together with some complex structures in the obstacle wake, including vortex shedding phenomena, however with a small amplitude.
Density fields obtained at $t=1$ with the scheme proposed here and with an upwind discretization of the momentum balance (while the mass balance is still discretized by a MUSCL scheme) are plotted in Figure \ref{fig:swe}.
These results look qualitatively similar, which is because the governing structures in the flow for the velocity are shocks, where the diffusion brought by the upwind discretization is controlled by the compressive character of the velocity field.
Note also that the Heun scheme is observed to be more diffusive for shock solutions than the first-order forward Euler time marching algorithm \cite{gas-18-mus}, the diffusion being probably generated by the last averaging step of the algorithm (when written under the form \eqref{eq:heun_algo}).
%
%
\subsection{Euler equations}

We now turn to an application of the MUSCL discretization to the compressible Euler equations, which read
\begin{subequations}
\begin{align}\label{eu:eq:pb_mass} &
\partial_t \rho + \dive( \rho\, \bfu) = 0,
\\[1ex] \label{eu:eq:pb_mom} &
\partial_t (\rho\, \bfu) + \dive(\rho\, \bfu \otimes \bfu) + \gradi p= 0,
\\[1ex] \label{eu:eq:pb_Etot} &
\partial_t (\rho\, E) + \dive(\rho \, E \, \bfu) + \dive ( p \, \bfu )=0,
\\ \label{eu:eq:pb_etat} &
 p=(\gamma-1)\, \rho\, e, \qquad E=\frac 1 2|\bfu|^2+e,
\end{align} \label{eu:eq:pb}\end{subequations}
where $\gamma>1$ is a coefficient specific to the fluid under consideration.
As explained in Section \ref{sec:corr_Euler}, while preserving the consistency with the total energy balance \eqref{eu:eq:pb_Etot}, we choose to base the scheme on the internal energy balance equation, which formally takes the following form:
\begin{align}\label{eu:eq:pb_Eint}
\partial_t (\rho e) + \dive(\rho e \bfu)+ p\, \dive \bfu =0.
\end{align}
For shock solutions, this equality becomes an inequality (the left-hand side is non-negative).
%
%
\subsubsection{The scheme}

The discrete unknowns for the internal energy are associated with the primal mesh, and the scheme reads:
\begin{subequations}
\begin{align}
\displaystyle \label{eu:eq:scheme_mass} &
\forall K \in \mesh, \;
\dfrac{|K|}{\delta t}(\rho^{n+1}_K-\rho^n_K) + \dive(\rho^n \bfu^n)_K=0,
\displaybreak[1]\\ \label{eu:eq:scheme_Eint} &
\forall K \in \mesh, \;
\dfrac{|K|}{\delta t}(\rho^{n+1}_K e^{n+1}_K-\rho^n_K e^n_K) + \dive(\rho^n e^n \bfu^n )_{K}
 +|K|\ p^n_K \,(\dive \bfu^n)_K =S^n_K,
\displaybreak[1]\\[2ex] \label{eu:eq:scheme_eos} &
\forall K \in \mesh, \;
 p^{n+1}_K=(\gamma-1)\ \rho^{n+1}_K\ e^{n+1}_K,
\displaybreak[1]\\[2ex]\displaystyle \nonumber &
\mbox{For } 1 \leq i \leq d,\ \forall \edge \in \edges,
\\[1ex] \label{eu:eq:scheme_mom} & \displaystyle \phantom{\forall K \in \mesh, \;}
\dfrac{|D_\edge|}{\delta t}(\rho^{n+1}_{D_\edge} u^{n+1}_{\edge,i}-\rho^n_{D_\edge} u^n_{\edge,i})
+ \dive(\rho^n u^n_i \bfu^n)_\edge
+ |D_\edge|\, (\gradi p)^{n+1}_{\edge,i}
=0.
\end{align}\label{eu:eq:scheme} 
\end{subequations}

All the terms have been previously introduced, except the convection term of the discrete internal energy Equation \eqref{eu:eq:scheme_Eint} which reads:
\[
\dive(\rho^n e^n \bfu^n)_{K} = \sum_{\edge \in \edges(K)} F_{K,\edge}^n e^n_\edge,
\]
where the face value $e^n_\edge$ is given by a monotone approximation, \ie\ either a first-order upwind (with respect to the mass flux $F_{K,\edge}^n$) or a MUSCL-like approximation \cite{pia-13-for}; unless specified, this latter choice is made here.
The corrective term $S^n_K$ of the internal energy balance \eqref{eu:eq:scheme_Eint} is derived in Section \ref{sec:corr_Euler}.
As in the barotropic case, a stabilization of the form \eqref{eq:stab_mom} may be introduced in the discrete momentum balance equation \eqref{eu:eq:scheme_mom}; in this case, the corresponding dissipation must be added to $S^n_K$  (see \cite{her-18-cons}).
%
%
\subsubsection{A one-dimensional Riemann problem...}

We assess the behaviour of the scheme on a Riemann problem, known as \textbf{Test case 3} of \cite{tor-13-rie}.
The left and right states are given by:
\[
\mbox{left state: } \begin{bmatrix} \rho_L=1 \\ u_L = 0 \\ p_L = 1000 \end{bmatrix}; \qquad
\mbox{right state: } \begin{bmatrix} \rho_R=1 \\ u_R = 0 \\ p_R = 0.001 \end{bmatrix}.
\]
The computational domain is $\Omega=(0,1)$ and the final time is $T=0.012$.
At the time $t=0$, the unknowns are given for $x<0.5$ by the left state, and by the right state otherwise.
The boundary conditions are Dirichlet conditions, the prescribed values being once again given by the left and right states.
The structure of the solution to this problem is the following \cite{tor-13-rie}: on the left side of the domain, a rarefaction wave travels to the left; it is separated by a contact discontinuity from a shock wave on the right side of the domain, travelling to the right.

\medskip
\paragraph{... on a really one-dimensional domain.}

First, we choose to discretize the domain as a real one-dimensional domain, in which case the space discretization with the Rannacher-Turek element is equivalent to the usual MAC scheme \cite{har-65-num,har-71-num}.
The space step $h$ is uniform, and its value is $h=1/1000$ for the results plotted in this section; the time step is equal to $\delta t = h / 100$.
Here, no stabilization term needs to be added to the discrete momentum balance equation.
We illustrate the effect of the corrective term on the density, the energy, the pressure, and the velocity in Figure \ref{fig:Euler_Riemann_w_wo}.
As expected, without correction, the scheme is not consistent, because the computed (approximate) jump at the shock does not satisfy the Rankine-Hugoniot jump relations (this error propagating to the whole solution).
A convergence study would show that the solution obtained without corrective terms converges, but to a limit that is not a weak solution to Euler equations.
On the opposite, with the correction, the discontinuities position and the constant states are correctly (exactly, up to rounding errors, for the latter) computed; when refining the mesh, the convergence is achieved essentially by sharpening the "approximate discontinuities", and we observe a first -order convergence for the velocity and the pressure (the unknowns which are constant through the contact discontinuity) and of order slightly greater than 0.8 for the density.

\medskip
We compare in Figure \ref{fig:Euler_Riemann_up_muscl} the results obtained with the proposed MUSCL scheme with the scheme of \cite{her-18-cons}, which uses a first-order upwind discretization of the convection term (in the three equations of the system).
As expected, the high-order approximation notably reduces the numerical diffusion, which essentially plagues the contact discontinuity.

\begin{figure}
    \centering
    \begin{minipage}{0.47\textwidth}
        \includegraphics[width=0.95\textwidth]{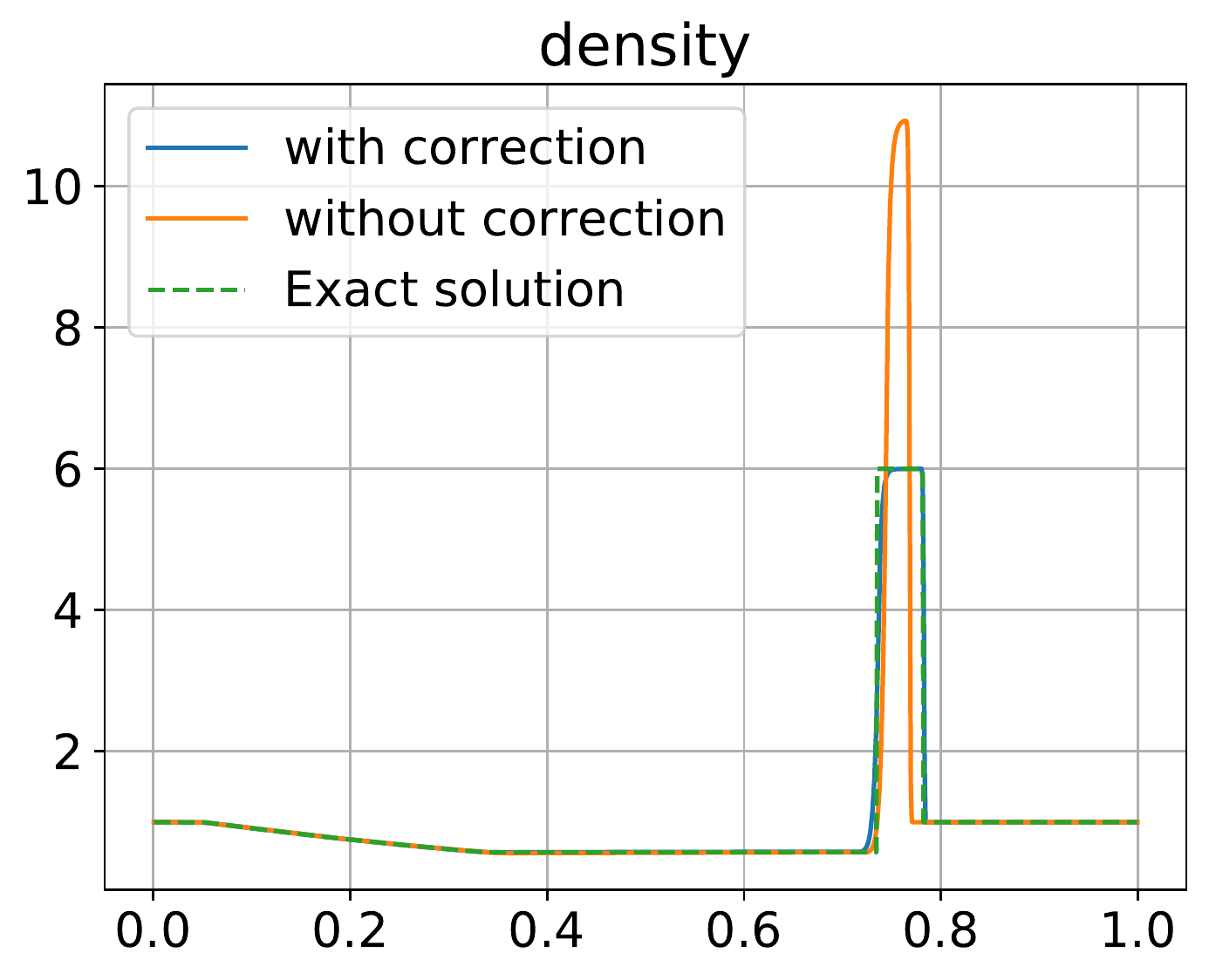}
    \end{minipage}%
    \begin{minipage}{0.47\textwidth}
        \includegraphics[width=\textwidth]{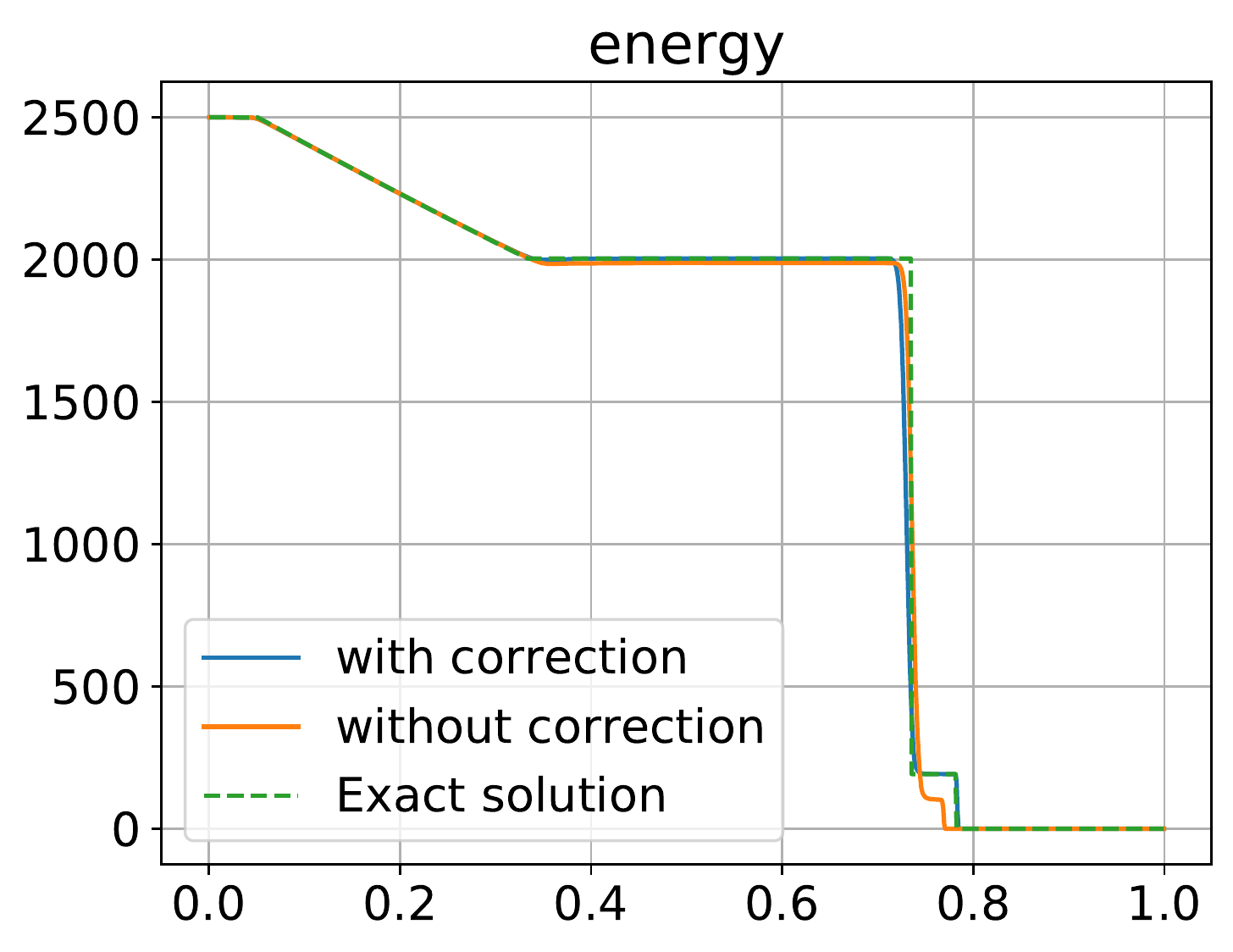}
    \end{minipage}
    \begin{minipage}{0.47\textwidth}
        \includegraphics[width=0.95\textwidth]{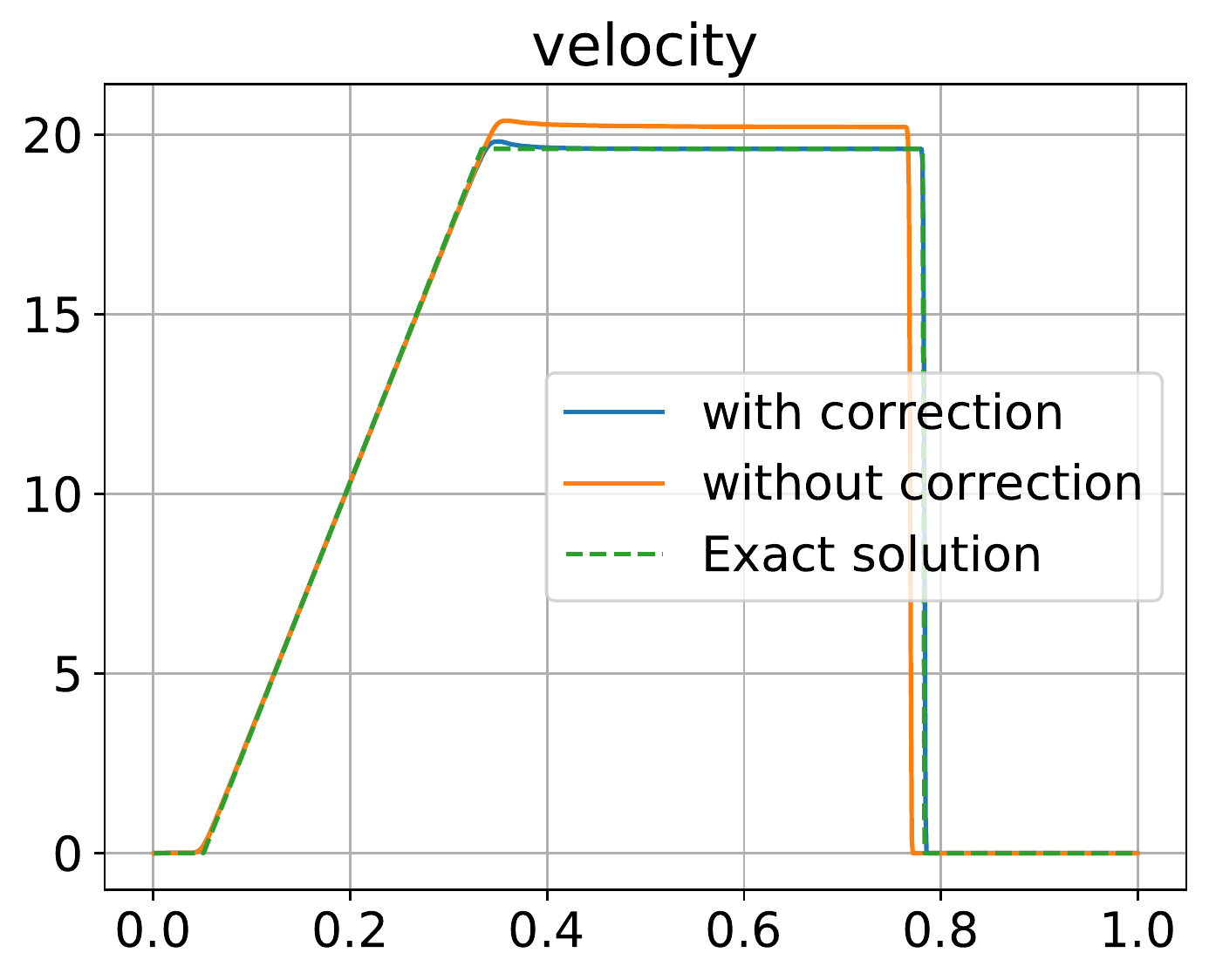}
    \end{minipage}
    \begin{minipage}{0.47\textwidth}
        \includegraphics[width=\textwidth]{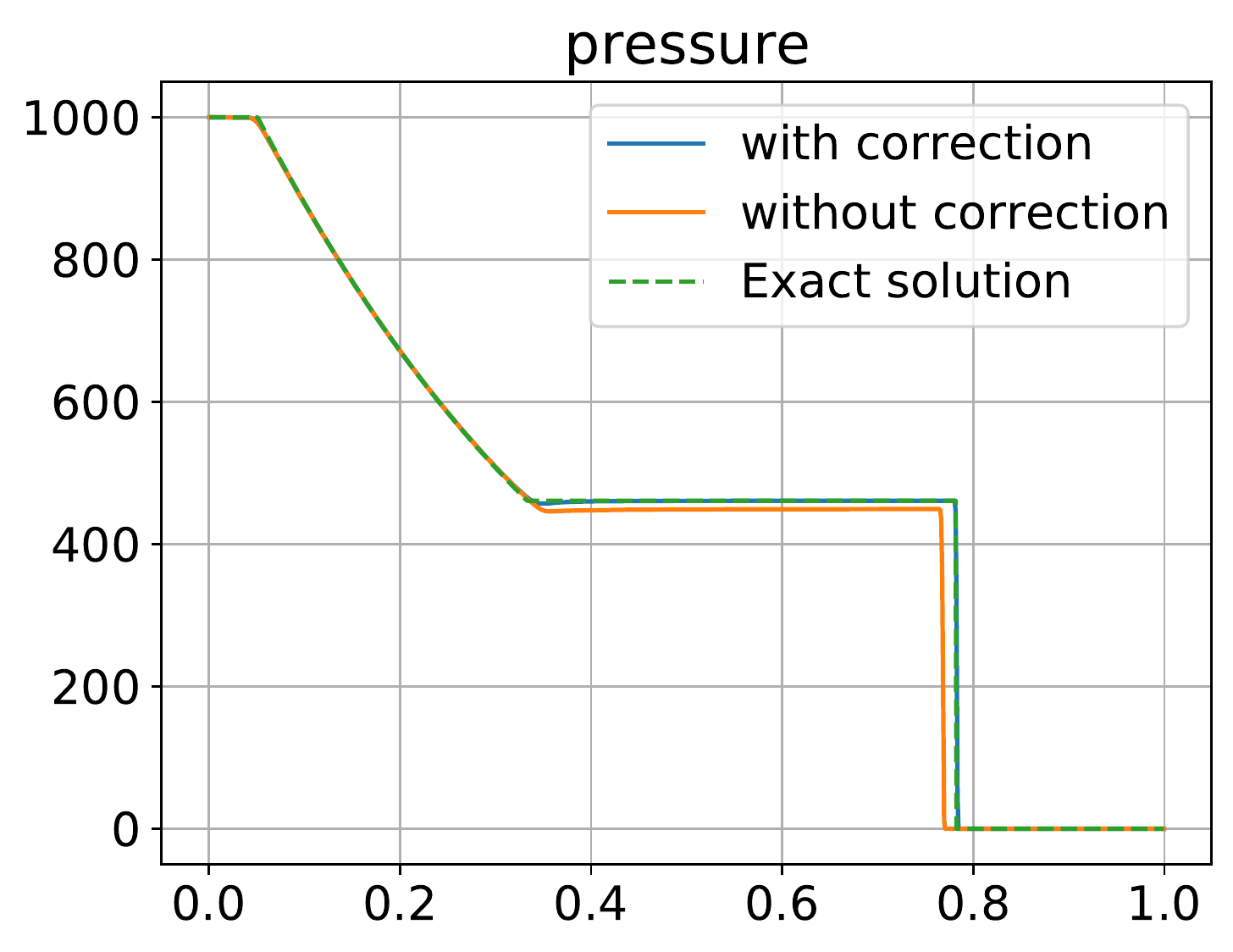}
    \end{minipage}
    \caption{Riemann problem for the Euler equations - Comparison of the results of the Test case 3 of \cite{tor-13-rie} for a MUSCL discretization of the convection term, with the corrective term of Section \ref{sec:corr_Euler} (in blue) or without it (in orange). Exact solution is plotted in green.}
    \label{fig:Euler_Riemann_w_wo}
\end{figure}

\begin{figure}
    \centering
    \begin{minipage}{0.47\textwidth}
        \includegraphics[width=0.95\textwidth]{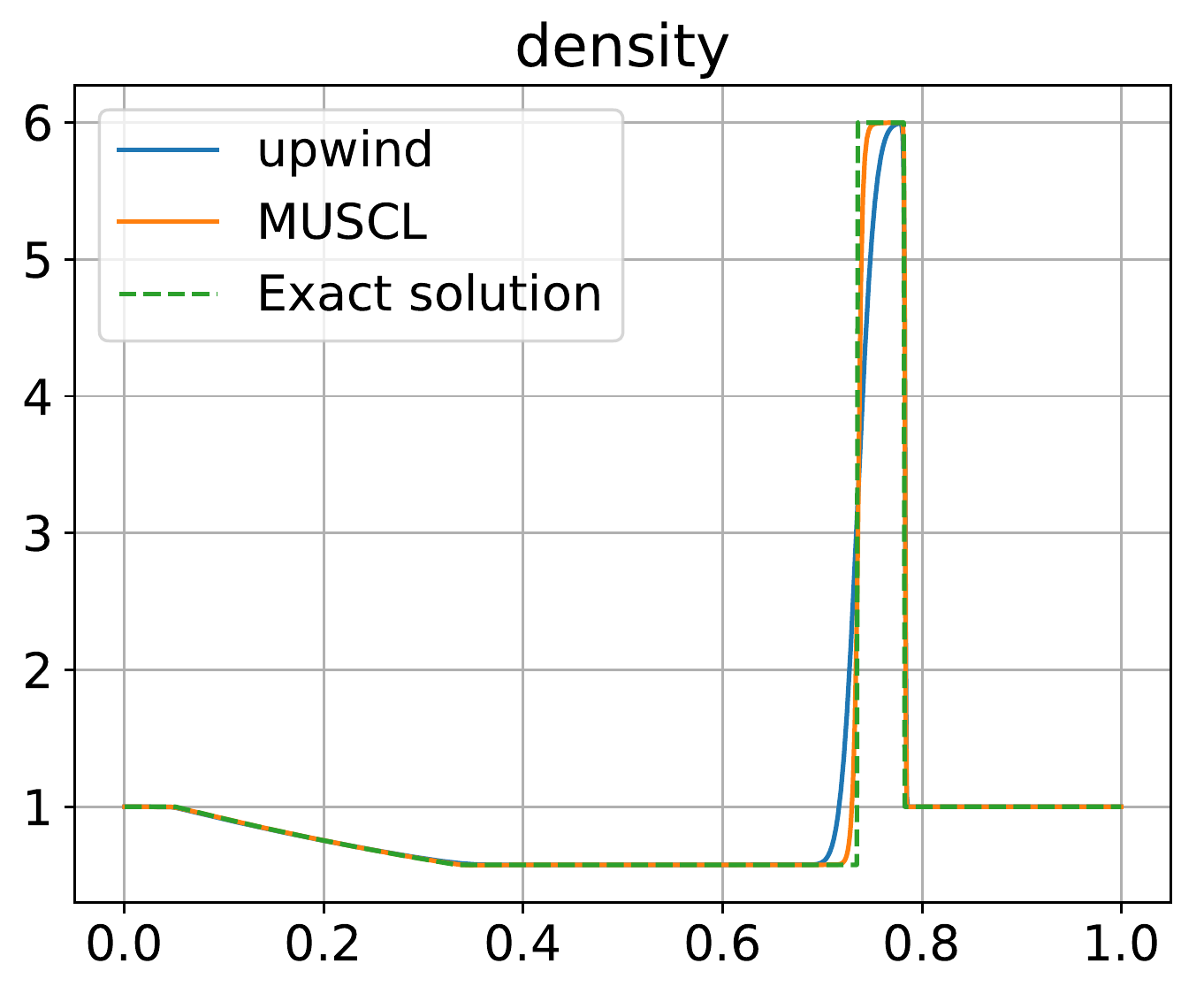}
    \end{minipage}%
    \begin{minipage}{0.47\textwidth}
        \includegraphics[width=\textwidth]{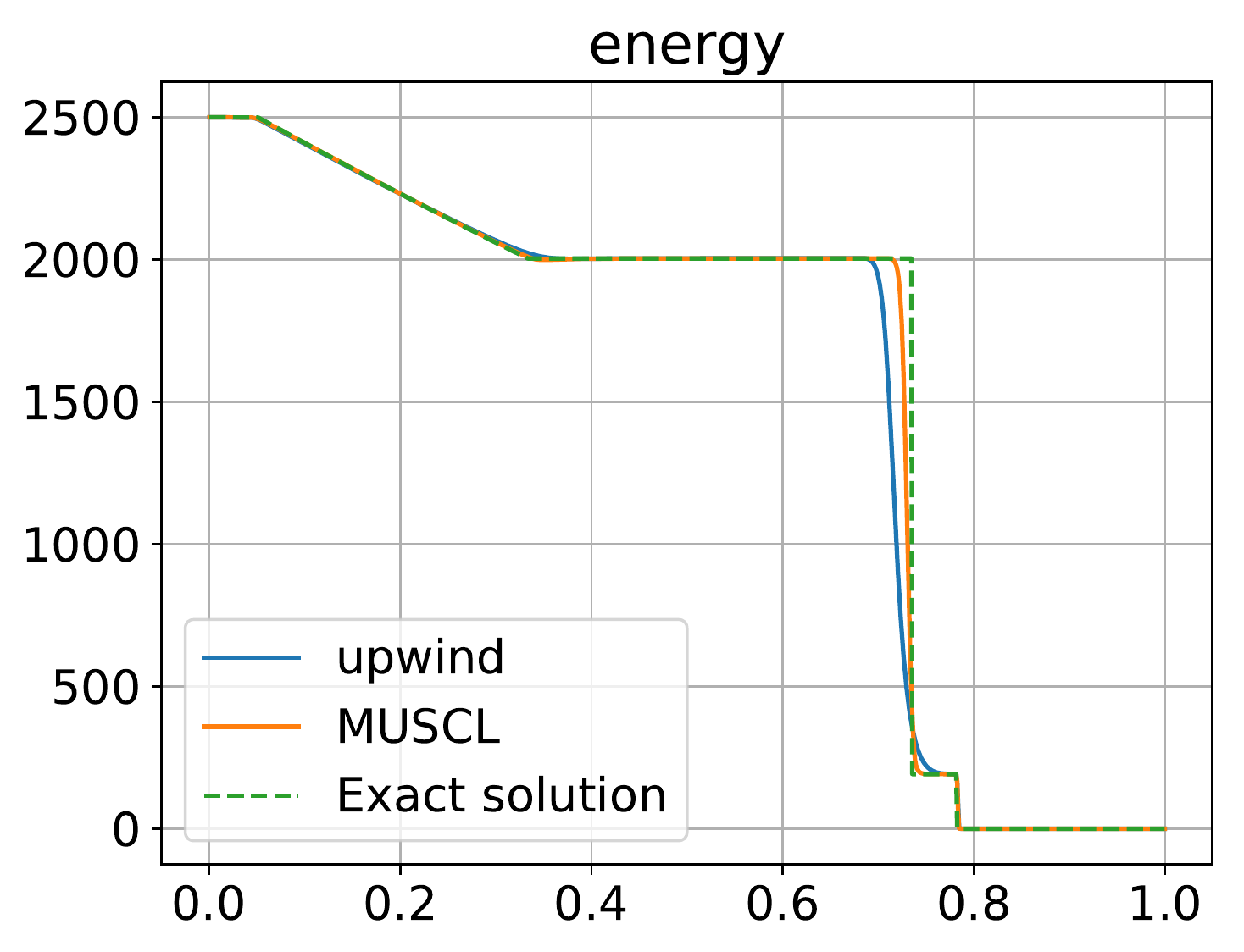}
    \end{minipage}
    \begin{minipage}{0.47\textwidth}
        \includegraphics[width=0.95\textwidth]{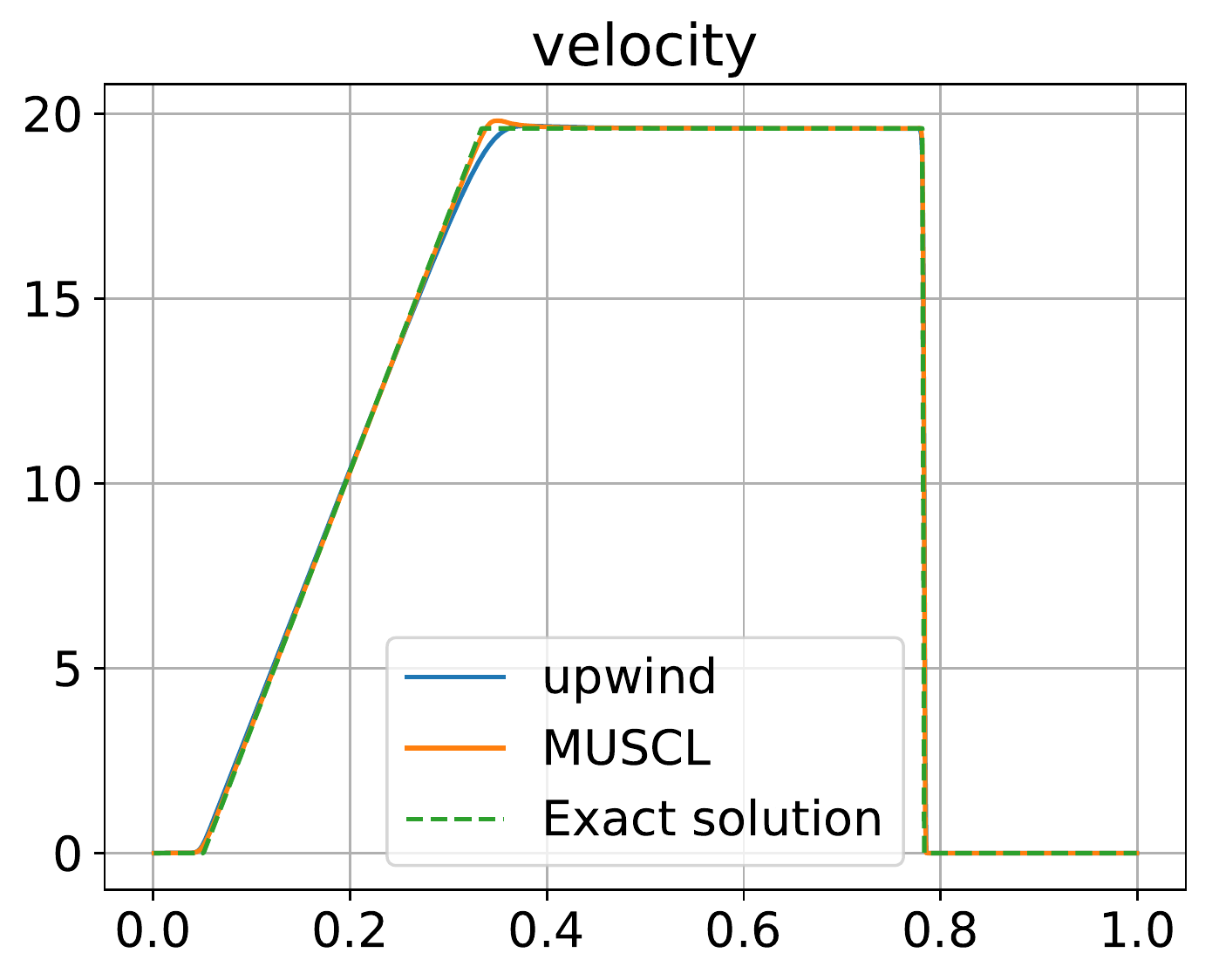}
    \end{minipage}
    \begin{minipage}{0.47\textwidth}
        \includegraphics[width=\textwidth]{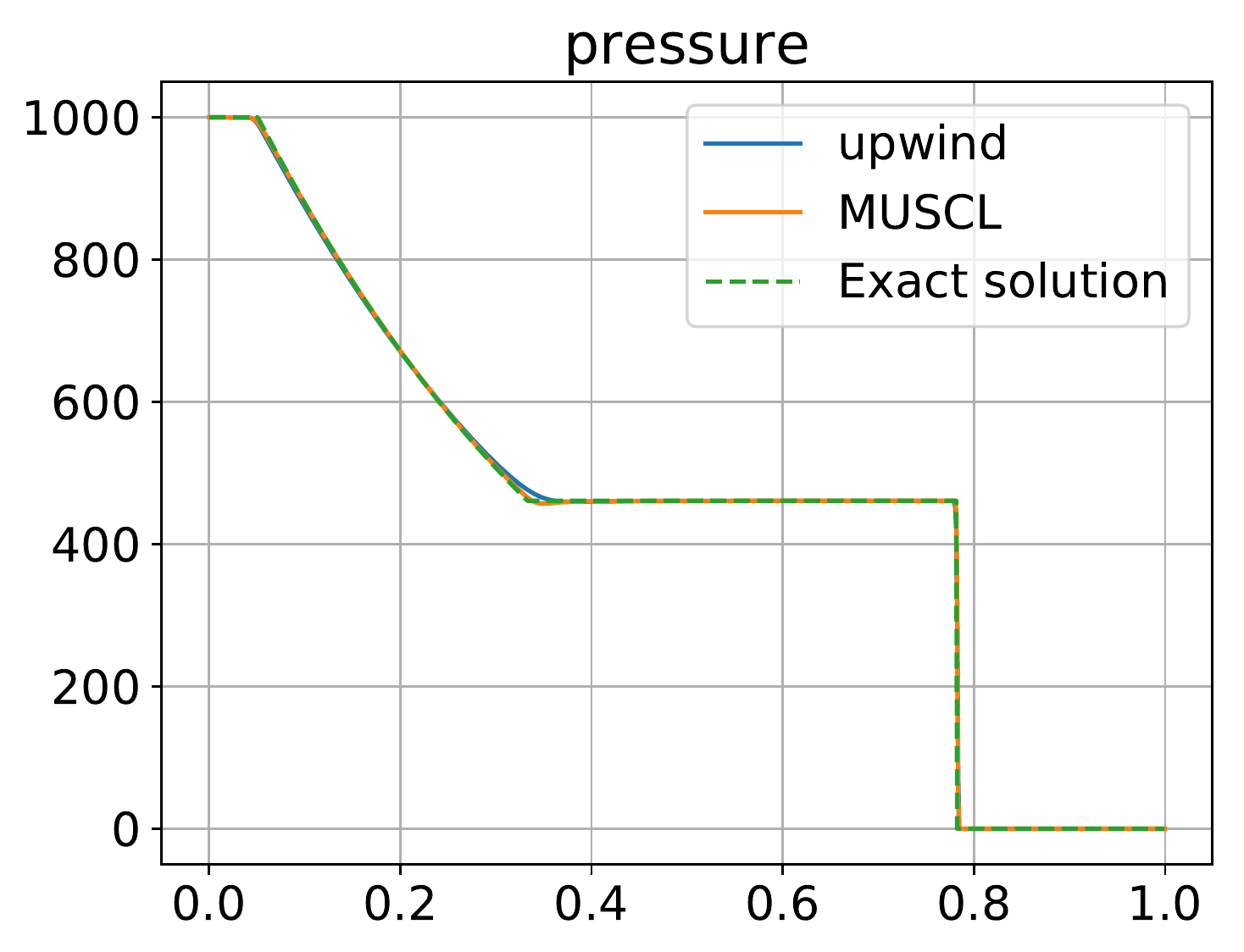}
    \end{minipage}
    \caption{Riemann problem for the Euler equations - Comparison of the results of the Test case 3 of \cite{tor-13-rie} with MUSCL (in orange) and upwind (in blue) discretization of the convection terms. Exact solution is plotted in green.}
    \label{fig:Euler_Riemann_up_muscl}
\end{figure}

\paragraph{... on a fictitious two-dimensional domain.}

In the one-dimensional case, the space discretization for the Rannacher-Turek element is quite different from the multi-dimensional case; in particular, in 1D, all the degrees of freedom of the velocity correspond to the normal component to the face; moreover, in 2D, the convection fluxes involve unknowns which are associated to non-aligned face centers.
Therefore, we reproduce the test with a "fictitious" two-dimensional domain.
This domain is now chosen as $\tilde{\Omega} = \Omega \times [0,h]$, where $h$ is the space step in the $x$- and $y$-direction (so the mesh consists of only one horizontal stripe of meshes), with again $h = 0.001$. 
Symmetry (or impermeability and perfect slip) boundaries condition are prescribed at the top and the bottom sides of the domain.
Now, as already observed in \cite{khe-13-pre}, the stabilization term given by \eqref{eq:stab_mom} has to be introduced in the momentum balance equation \eqref{eu:eq:scheme_mom}, to avoid an odd-even decoupling between the normal (\ie\ associated to vertical faces) and the tangential (\ie\ associated to horizontal external faces) $x$-components of the velocity.
The viscosity coefficient featured in \eqref{eq:stab_mom} is constant and set to $\nu = 50 h \ u_{\max} \ \rho_{\max}$ where $u_{\max}=19.6$ and $\rho_{\max}=6$ are the maximum velocity and density, respectively, given by the analytical solution.
Due to the explicit-in-time approximation of the viscosity term, the stability of the scheme is conditioned to a $\cfl$ criterion of the form $\delta t \leq c h^2$, so the time step is reduced and set to $\delta t = h / 200$.

\medskip
Results are compared to the ones obtained in the previous paragraph in Figure \ref{fig:Euler_Riemann_1D_2D}. 
A good agreement is observed, even though the introduction of the stabilization term leads to a slightly more diffusive scheme, as one could expect.

\begin{figure}
    \centering
    \begin{minipage}{0.47\textwidth}
        \includegraphics[width=0.95\textwidth]{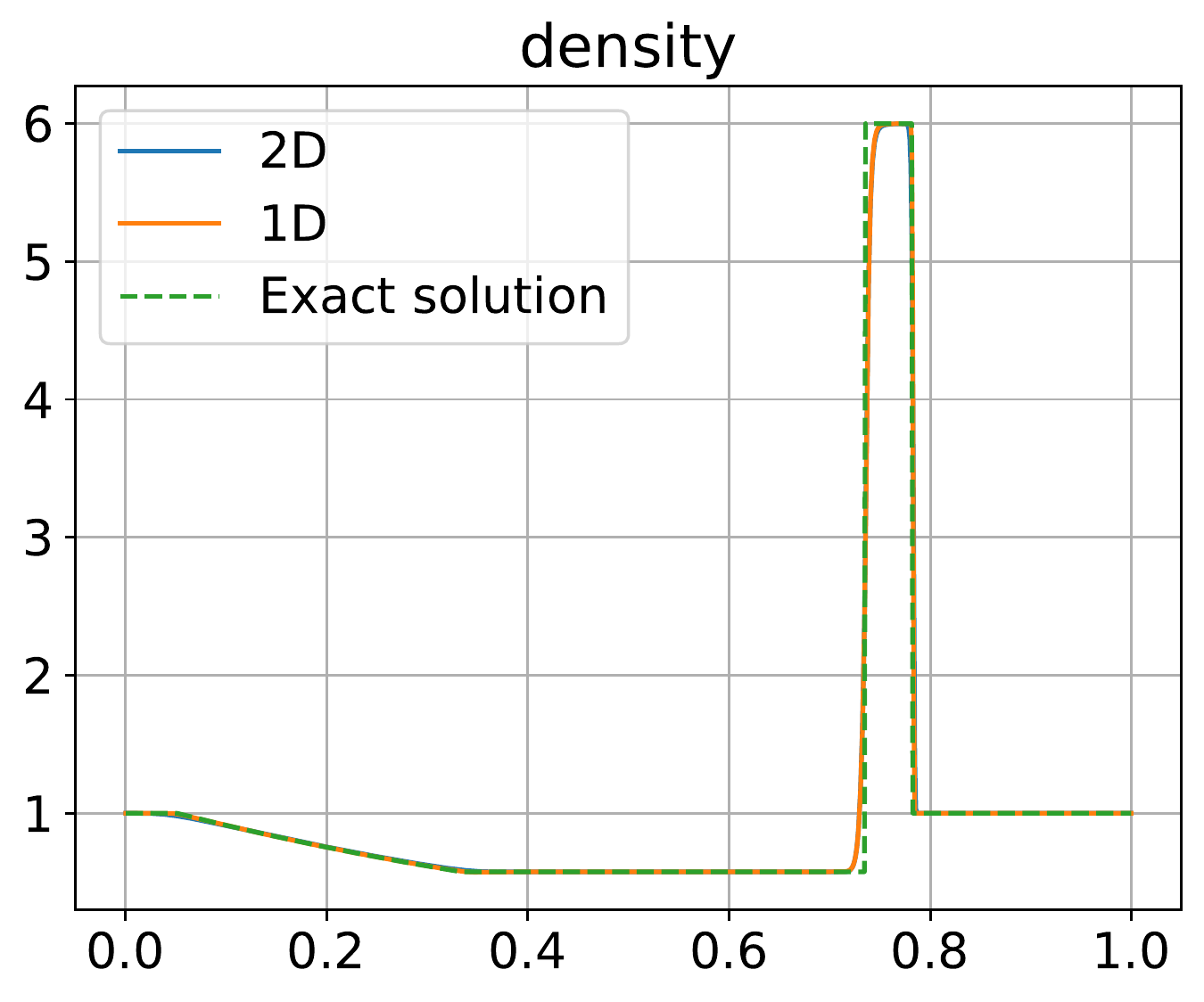}
    \end{minipage}%
    \begin{minipage}{0.47\textwidth}
        \includegraphics[width=\textwidth]{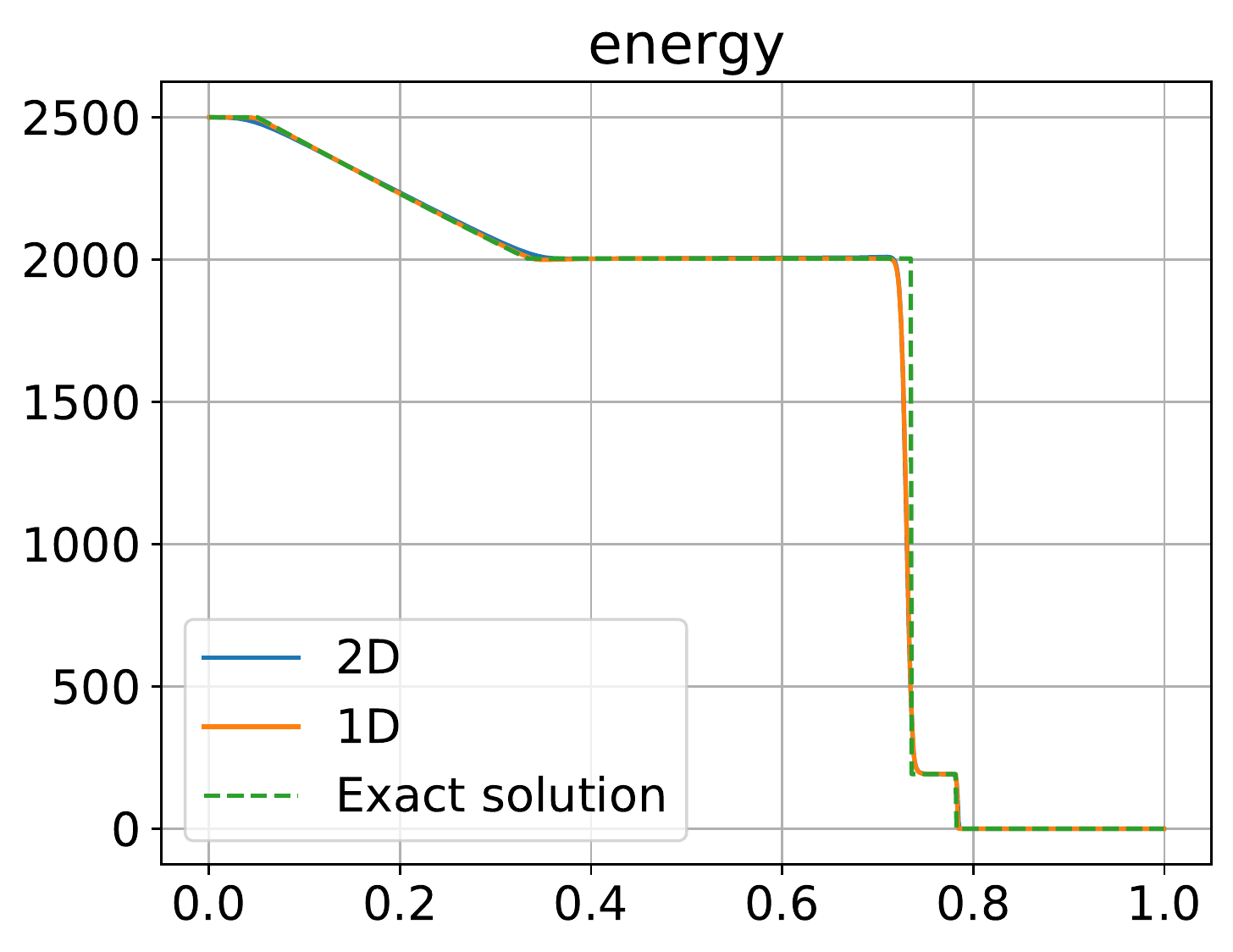}
    \end{minipage}
    \begin{minipage}{0.47\textwidth}
        \includegraphics[width=0.95\textwidth]{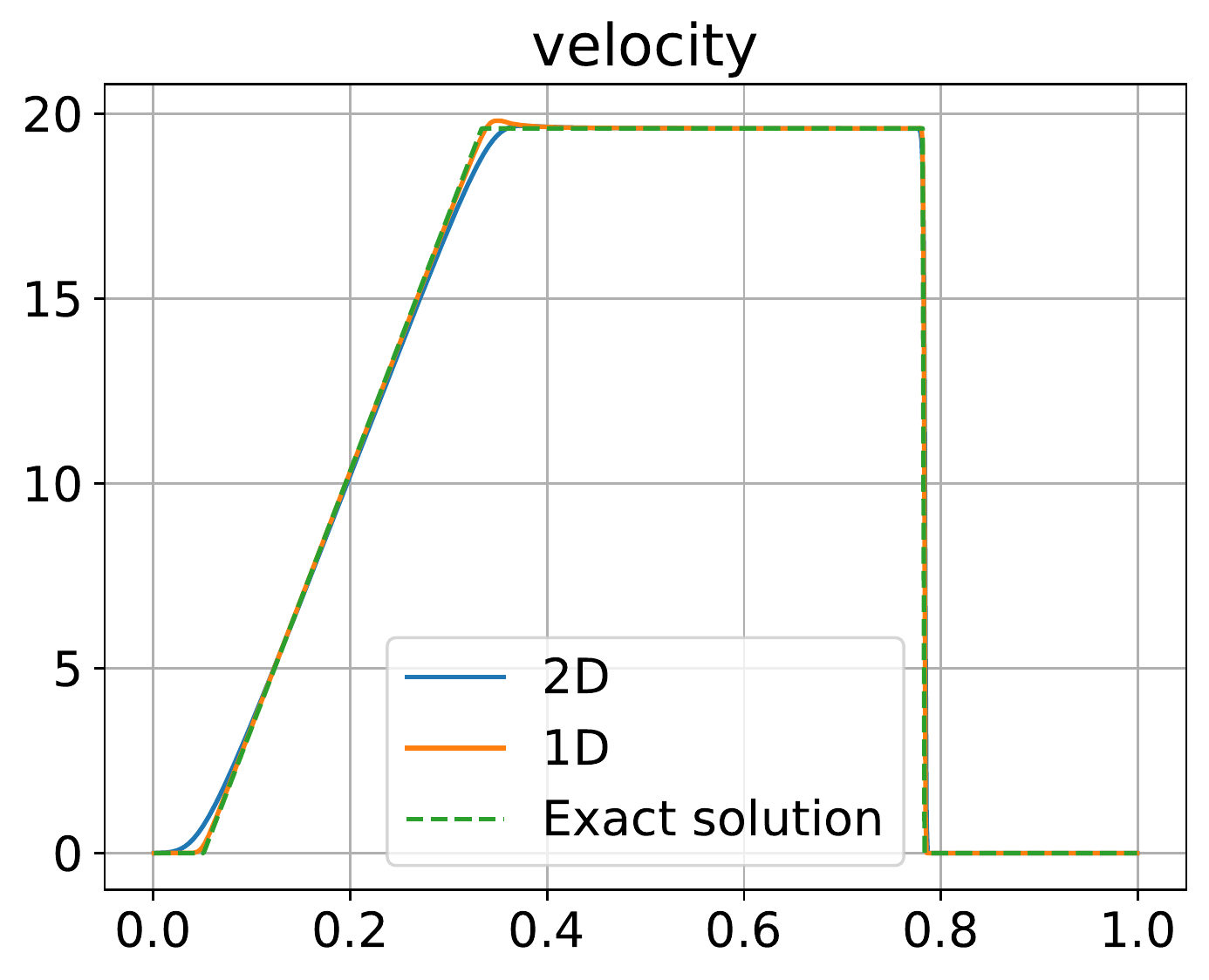}
    \end{minipage}
    \begin{minipage}{0.47\textwidth}
        \includegraphics[width=\textwidth]{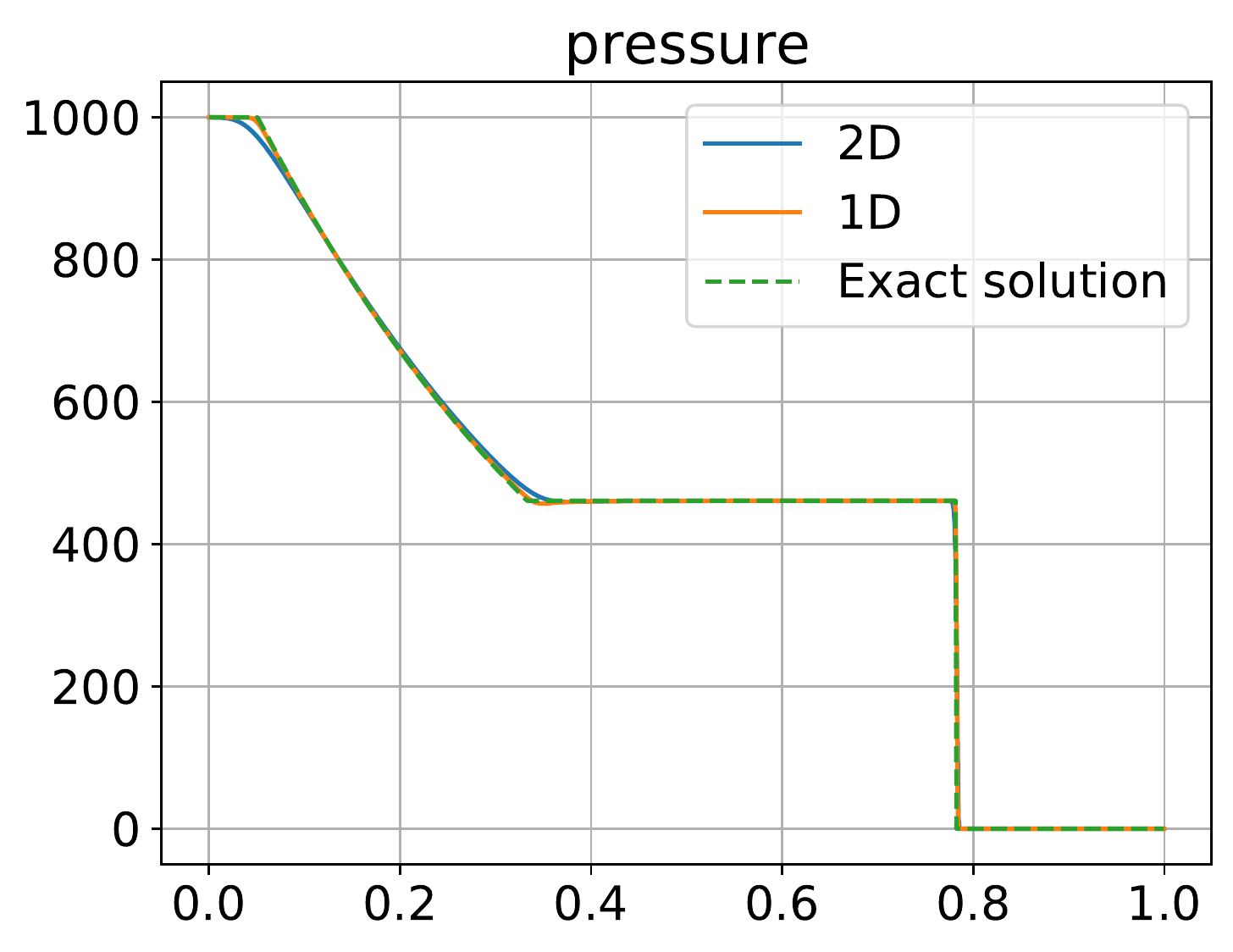}
    \end{minipage}
    \caption{Riemann problem for the Euler equations - Comparison of the results of the Test case 3 of \cite{tor-13-rie} for a MUSCL discretization of the convection term, on a one dimensional domain (in orange) or a fictitious two dimensional domain (in blue). Exact solution is plotted in green.}
    \label{fig:Euler_Riemann_1D_2D}
\end{figure}
%
%
\subsubsection{Flow past a cylinder}

We now address once again the problem of a flow past a cylinder, with the same domain as for the barotropic case.
Once again, we suppose that the initial data is a given homogeneous state with a fluid at rest, and we generate a shock travelling to the right by choosing suitable boundary conditions on the left side of the domain.
In addition, we tune the data to obtain a "non-isentropic analogue" of the case presented in Section \ref{sec:baro_cyl}.
We take $\gamma=2$, so that the usual entropy for the Euler equations reads $s=e/\rho$.
If the entropy were constant, the equation of state $p=(\gamma-1)\rho e$ would yield $p=s \rho^2$, and we would obtain the same problem as in Section \ref{sec:baro_cyl} provided that $s=a$.
We thus choose for the density the same value as in Section \ref{sec:baro_cyl}, \ie\ $\rho_0=0.2$, and the initial internal energy is given by $e_0=a \rho_0$.
The Mach number characterizing the shock is still $M=2$, its celerity is $\omega=M\,(\gamma p_0/\rho_0)^{1/2}$, and the Rankine-Hugoniot condition yields the values of the unknowns $(\rho_b,u_b,p_b)$ to be prescribed at the left boundary:
\[
\rho_b= \frac{\gamma +1}{\gamma -1+\dfrac 2 {M^2}}\ \rho_o,\qquad
u_b=\omega\ \bigl(1-\frac{\rho_o}{\rho_b}\bigr),\qquad
p=p_0+\omega^2 \bigl(1-\frac{\rho_0}{\rho_b}\bigr)\ \rho_0.
\]
Impermeability and perfect slip conditions are prescribed at the other boundaries, except the right one where we let the flow leave the domain, with the same technique as for the barotropic case.

\medskip
As in Section \ref{sec:baro_cyl}, we use a mesh that consists of $106897$ control volumes, and the time step is equal to $\delta t = 4.10^{-6}$.
The simulation is run until the final time $T=1$.
A stabilisation is once again needed, and the viscosity coefficient is chosen constant and equal to be roughly equal to $\overline{c}/10$, where $\overline{c}$ is the approximated sound of speed in the medium $\overline{c} := (\gamma p_{\max}/\rho_{\max})^{1/2}$ where $p_{\max} = 1.7$ and $\rho_{\max} = 0.5$.

\medskip
As in the barotropic case, the computations show a reflection of the shock on the obstacle, which generates a reflected shock (first curved then tending to a plane wave) travelling to the left (at a speed similar to the barotropic case), together with some complex structures in the obstacle wake, including vortex sheddings.
However, here, this latter phenomenon is much more visible (Figure \ref{fig:Euler_FPAC}).

\begin{figure}
\centering
\includegraphics[width=\textwidth]{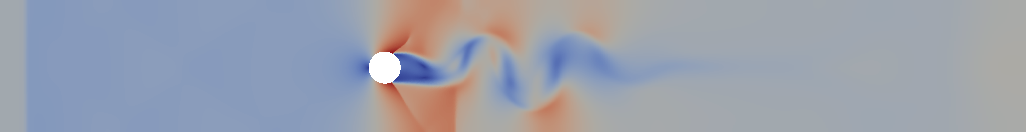} \\[2ex]
\includegraphics[width=\textwidth]{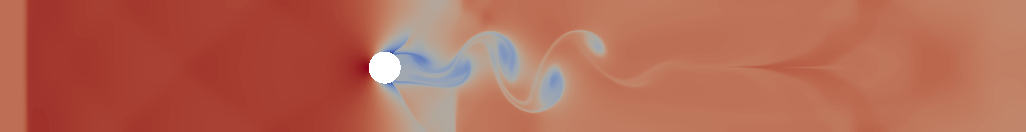} \\[2ex]
\includegraphics[width=\textwidth]{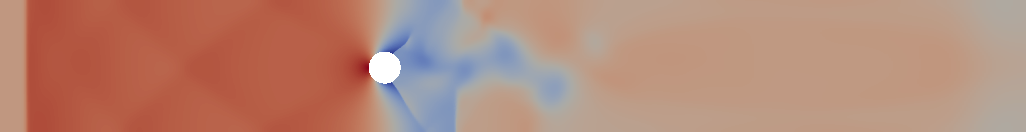}
\caption{Flow past a cylinder, Euler equations - From top to bottom, velocity, density, and pressure at time $t=1$.}
\label{fig:Euler_FPAC}
\end{figure}

\section{Conclusion}

In this work, we presented a discretization of the momentum convection operator for quadrilaterals or hexahedral meshes.
The discrete operator is based on a low-order finite-volume-like formulation on staggered discretization, and, due to its generic form, is valid for the simulations of both compressible and incompressible flows.
The computation of the interpolation of the velocity is done through an algebraic MUSCL procedure, designed to get a higher-order convection operator (that is, less diffusive than the classical upwind method) that does not yield spurious oscillations.
The limitation process is algebraic in the sense that it does not require a slope reconstruction of any kind, but rather hinges on stability conditions that are originally derived to yield a maximum principle for a transport equation.
Furthermore, we showed that it is possible to derive an approximate transport operator for the kinetic energy from this convection operator, which might be used as a primary step to prove a kinetic energy inequality for incompressible or barotropic flows or to derive consistent schemes for the Euler equations.
Finally, we presented numerical results for incompressible, barotropic or compressible flows.
In all these tests, we checked that the MUSCL method brought an enhancement compared with classical interpolation techniques: it appears to be more stable than the centered scheme and less diffusive than the upwind one.
On a Cartesian sequence of meshes, we also verified that the MUSCL scheme is higher-order than the upwind method, but is only almost second-order.
In the present formulation of the scheme, a second-order interpolation of the velocity at the faces is not reachable in general cases, since we chose not to precisely define the geometry of the dual mesh associated to the velocity.
Indeed, only the volume of these dual cells as well as the mass fluxes on these fake control volumes are needed to write the scheme.
These values are then computed from algebraic constraints, thought to verify a local discrete mass balance required for the derivation of the kinetic energy transport operator.
For a cell of a given polygon or polyhedron type, it is then possible to determine once and for all an explicit expression for these quantities, since these conditions are unique.
This brings two outcomes: first, we obtain an efficient computation of the dual mass fluxes; second, this construction is readily extendable to more general cells (such as prisms or pyramids for three-dimensional problems).
Such work is, for instance, conducted in \cite{bru-22-sta}, where we also prove that the construction of the dual mass fluxes from the stability requirements also implies their consistency.

\bibliographystyle{abbrv}
\bibliography{muscl}

\end{document}